\DeclareRobustCommand{\tikzcaption}[1]{\tikzset{external/export next=false}#1}
\DeclareRobustCommand{\tikzref}[1]{\tikzcaption{\resizebox{!}{\refsize}{\ref{#1}}}}
\newcommand{\vertiii}[1]{{\left\vert\kern-0.25ex\left\vert\kern-0.25ex\left\vert #1 
   \right\vert\kern-0.25ex\right\vert\kern-0.25ex\right\vert}}
\newtheorem{Definition}{Definition}[section]
\newtheorem{theorem}{Theorem}[section]
\newtheorem{theorem1}{Theorem}[section]
\newtheorem{theorem2}{Theorem}[section]
\newtheorem{proposition}[theorem]{Proposition}
\newtheorem{lemma}[theorem1]{Lemma}
\newtheorem{remark}[theorem2]{Remark}
\newtheorem{corollary}[theorem]{Corollary}
\newtheorem{hypothesis}{Hypothesis}  
\def\blfootnote{\xdef\@thefnmark{$\star$}\@footnotetext}
\newenvironment{Authors}%
  {\begin{center}\begin{bfseries}}%
  {\end{bfseries}\end{center}}
\newenvironment{Addresses}%
  {\begin{flushleft}\begin{itshape}}%
  {\end{itshape}\end{flushleft}}
  \newcommand{\email}[1]{\hspace*{\stretch{1}}\emph{\texttt{#1}}}
\begin{document}

\thispagestyle{plain}

\title{Compositional maps for registration in complex geometries}
 \date{}
 \maketitle

 \maketitle
\vspace{-50pt} 
 
\begin{Authors}
Tommaso Taddei$^{1}$
\end{Authors}

\begin{Addresses}
$^1$
Univ. Bordeaux, CNRS, Bordeaux INP, IMB, UMR 5251, F-33400 Talence, France\\ Inria Bordeaux Sud-Ouest, Team MEMPHIS, 33400 Talence, France, \email{tommaso.taddei@inria.fr} \\
\end{Addresses}


\begin{abstract}
We develop and analyze a parametric registration procedure for manifolds associated with the solutions to parametric partial differential equations in two-dimensional domains.
Given the domain $\Omega \subset \mathbb{R}^2$ and the manifold $\mathcal{M}=\{ u_{\mu} : \mu\in \mathcal{P}\}$ associated with the parameter domain $\mathcal{P} \subset \mathbb{R}^P$ and the parametric field $\mu\mapsto u_{\mu} \in L^2(\Omega)$, our approach takes as input a set of snapshots from $\mathcal{M}$ and returns a parameter-dependent mapping $\Phi: \Omega \times \mathcal{P} \to \Omega$, which tracks coherent features (e.g., shocks, shear layers) of the solution field and ultimately simplifies the task of model reduction.
We consider mappings of the form $\Phi=\texttt{N}(\mathbf{a})$  where $\texttt{N}:\mathbb{R}^M \to {\rm Lip}(\Omega; \mathbb{R}^2)$ is a suitable linear or nonlinear operator; then, we state the registration problem as an unconstrained optimization statement for the coefficients $\mathbf{a}$. We identify minimal requirements for the operator $\texttt{N}$ to ensure the satisfaction of the bijectivity constraint; we propose a class of compositional maps that satisfy the desired requirements and enable non-trivial deformations over curved (non-straight) boundaries of $\Omega$; 
we develop a thorough analysis of the proposed ansatz for polytopal domains and we discuss the approximation properties for general curved domains. We perform numerical experiments for 
a parametric inviscid transonic compressible flow past a cascade of turbine blades to illustrate the many features of the method.
\end{abstract}

\noindent
\emph{Keywords:} 
parameterized partial differential equations;
registration in bounded domains;
model order reduction.
\medskip

 \section{Introduction}
\label{sec:intro}

\subsection{Registration in bounded domains}
Numerical methods based on 
Lagrangian (registration-based) approximations have proven to be a promising research direction in scientific computing, both for the  numerical discretization of partial differential equations (PDEs)
\cite{shubin1981steady,zahr2018optimization}
and also for model order reduction  of parametric systems
(MOR,  \cite{iollo2014advection,mirhoseini2022model,mojgani2017arbitrary,sarna2021data,taddei2020registration}). 
Given the field of interest $u$ defined over the domain $\Omega\subset \mathbb{R}^2$, Lagrangian methods seek approximations of the solution $u$ of  the form $\tilde{u} \circ \Phi^{-1}$ where $\tilde{u}$ belongs to a linear approximation space and $\Phi$ is a bijection  $\Omega$.
This class of methods is designed for problems with sharp features such as shocks, contact discontinuities or shear layers in fluid mechanics and fractures in solid mechanics. The problem of finding the  mapping $\Phi$ is dubbed as registration problem and shares important features with geometry registration 
\cite{ma2018nonrigid,myronenko2010point}
 mesh morphing  \cite{staten2012comparison,tonon2021linear} techniques, and also optimal transportation \cite{blickhan2023registration,jacobs2020fast,peyre2019computational}.
The goal of this paper is to develop a general registration procedure for parametric problems with emphasis on parametric MOR applications, and to provide a  rigorous mathematical analysis of the problem of registration in bounded two-dimensional domains.

We pursue an optimization-based approach to the problem of registration. We denote by $\mu$ a vector of $P$ parameters in 
 the parameter domain $\mathcal{P} \subset \mathbb{R}^P$; given the domain $\Omega\subset \mathbb{R}^2$, we denote by $\mathfrak{B}$ the space of Lipschitz bijections from $\Omega$ in itself, 
 and by $\mathfrak{D}$ the space of diffeomorphisms from $\Omega$ in itself.
Given the mapping $\Phi$, we denote by $J(\Phi)$ the Jacobian determinant, and we denote by $\texttt{id}:\mathbb{R}^2\to \mathbb{R}^2$ the identity map in $\mathbb{R}^2$, $\texttt{id}(x)=x$ for all $x\in \mathbb{R}^2$.
If we fix the value of $\mu\in \mathcal{P}$, our goal is to minimize a target function 
 $\mathfrak{f}_{\mu}^{\rm tg}$ over all possible diffeomorphisms of $\Omega$,
\begin{equation}
\label{eq:optimization_based_registration_ideal}
\min_{\Phi \in \mathfrak{D}} \mathfrak{f}_{\mu}^{\rm tg}(\Phi).
\end{equation}
Problem \eqref{eq:optimization_based_registration_ideal} is computationally intractable due to the fact that  $\mathfrak{D}$ is  an highly non-convex subset of $C^1(\Omega; \mathbb{R}^2)$: research on registration should hence focus on the development of computational strategies to devise tractable counterparts of the statement \eqref{eq:optimization_based_registration_ideal}.

\subsection{Optimization-based registration}
To devise a tractable registration procedure, we introduce an operator $\texttt{N}: \mathbb{R}^M \to {\rm Lip}(\Omega; \mathbb{R}^2)$ and a    penalty function $\mathfrak{f}_{\rm pen}:\mathbb{R}^M \to \mathbb{R}_+$ such that
\begin{equation}
\label{eq:desiderata_mapping_space}
\left\{
\begin{array}{l}
\displaystyle{
\mathcal{B}_{\texttt{N}}
=\left\{
\texttt{N}(\mathbf{a})  \; :  \;
\mathfrak{f}_{\rm pen}(\mathbf{a}) \leq C
\right\}
\subset \mathcal{B},
\quad
{\rm for \; some} \; C>0;
}
\\[3mm]
\displaystyle{
\texttt{N}(\mathbf{a}= 0) = \texttt{id},
\;\;
\mathfrak{f}_{\rm pen}(\mathbf{a}= 0)  <  C;
}
\\[3mm]
\displaystyle{
\texttt{N}, \; \mathfrak{f}_{\rm pen}
\; {\rm are \; Lipschitz \; continuous}.
}
\\
\end{array}
\right.
\end{equation}
Equations \eqref{eq:desiderata_mapping_space}$_1$ and
\eqref{eq:desiderata_mapping_space}$_2$ imply that the set $\mathcal{B}_{\texttt{N}}$ is not empty; furthermore, exploiting \eqref{eq:desiderata_mapping_space}$_3$ we find that the interior of $\mathcal{B}_{\texttt{N}}$ is not empty --- that is, bijectivity of  the mapping $\texttt{N}(\mathbf{a})$  is preserved for small perturbations of the mapping coefficients $\mathbf{a}$.
We further observe that, given the full rank matrix $\mathbf{W}\in \mathbb{R}^{M\times m}$, the pair
$(\widetilde{\texttt{N}}, \widetilde{\mathfrak{f}_{\rm pen}})$ such that 
$\widetilde{\texttt{N}} (\cdot )= \texttt{N}(\mathbf{W} \cdot)$ and
$\widetilde{\mathfrak{f}_{\rm pen}} (\cdot )= \mathfrak{f}_{\rm pen}(\mathbf{W} \cdot)$ satisfies \eqref{eq:desiderata_mapping_space}: 
we can hence
apply linear compression methods
such as proper orthogonal decomposition
(POD, \cite{sirovich1987turbulence,volkwein2011model})
 to the  mapping coefficients $\mathbf{a}$ without fundamentally changing the properties  of our ansatz; as discussed in sections \ref{sec:affine_maps_polytopes} and \ref{sec:methods}, this feature greatly simplifies the task of dimensionality reduction for parametric problems.

Exploiting the previous definitions, we introduce the  surrogate of \eqref{eq:optimization_based_registration_ideal}:
\begin{equation}
\label{eq:tractable_optimization_based_registration}
\min_{   \mathbf{a}  \in \mathbb{R}^M}
 \mathfrak{f}_{\mu}^{\rm obj}(\mathbf{a} ):=
 \mathfrak{f}_{\mu}^{\rm tg}(\texttt{N}( \mathbf{a} ))
\; +\xi  \; 
 \mathfrak{f}_{\rm pen}(\mathbf{a} ).
\end{equation}
Provided that $\xi$ is sufficiently large, solutions $\mathbf{a}^{\star}$ to \eqref{eq:tractable_optimization_based_registration}   satisfy the condition $ \mathfrak{f}^{\rm pen}(\mathbf{a}^{\star} )$ $\leq C$. Note that \eqref{eq:tractable_optimization_based_registration} reads as a nonlinear non-convex unconstrained optimization problem that can be tackled using standard gradient-descent optimization algorithms.

The previous discussion highlights the two major questions in registration methods: 
(i) how to construct 
$(\texttt{N}, \mathfrak{f}_{\rm pen})$ that satisfy \eqref{eq:desiderata_mapping_space} for a given domain $\Omega$;
(ii) how to establish a rigorous relation between the solutions to \eqref{eq:optimization_based_registration_ideal} and to \eqref{eq:tractable_optimization_based_registration}
in the limit $M\to \infty$.
Note that the second question concerns the ability of approximating arbitrary elements of $\mathfrak{D}$ using operators $\texttt{N}$ that satisfy \eqref{eq:desiderata_mapping_space}. We notice that even if we focus on the approximation of diffeomorphisms we allow ourselves to consider approximations in a less regular space: this choice is justified by the particular discretization method (the finite element (FE) method) employed in this work to represent the operator $\texttt{N}$ and, more fundamentally,  by the  strategy  proposed here to define $\texttt{N}$. 

\subsection{Compositional maps for registration}
The objective of this paper is
to devise pairs  $(\texttt{N}, \mathfrak{f}_{\rm pen})$  that satisfy \eqref{eq:desiderata_mapping_space} and study the approximation properties in $\mathfrak{D}$ for arbitrary curved Lipschitz domains $\Omega$.
In this work, 
we say that the boundary  of $\Omega$ is curved
 if it cannot be represented as the   union of a finite number of straight lines; if $\partial \Omega$ is curved, 
  we refer to  $\Omega$ as to ``curved domain''.

Towards this end, we first show that
for polytopal domains
affine maps  
$\texttt{N}_{\rm p}( \mathbf{a}) = 
 \texttt{id} + \sum_{i=1}^M (\mathbf{a})_i \varphi_i$ 
for suitably chosen functions $\{  \varphi_i 
\colon\mathbb{R}^2\to\mathbb{R}^2
\}_i$ and a suitable penalty function  $\mathfrak{f}_{\rm pen}$
satisfy \eqref{eq:desiderata_mapping_space} 
(cf. Proposition \ref{th:bijectivity} 
 and Corollary
\ref{th:easy_bijectivity_consequence});
the function $\{  \varphi_i \}_i$  should be chosen so that 
their normal components vanish on $\partial \Omega$,
 $\varphi_i \cdot \mathbf{n} |_{\partial \Omega} = 0$ for $i=1,\ldots,M$,
 where $\mathbf{n}:\partial \Omega \to \mathbb{S}^1=\{x\in \mathbb{R}^2: \|x\|_2 = 1\}$ is the outward normal to $ \Omega$.
We   also prove that affine maps  of this form
are dense for $M\to \infty$ in a meaningful subspace of diffeomorphisms 
(cf. Proposition \ref{th:vertices_plus_density}  and 
Corollary \ref{th:easy_density_consequence}).

As rigorously shown in Lemma \ref{th:inadequacy_linear_maps}, 
 affine maps are fundamentally ill-suited to approximate non-trivial  diffeomorphisms in curved domains;
 to address this issue,
we propose and analyze compositional maps of the form
\begin{equation}
\label{eq:nonlinear_ansatz}
\texttt{N}(\mathbf{a}) = \Psi \circ  \texttt{N}_{\rm p}(\mathbf{a}) \circ \Psi^{-1},
\end{equation}
where $\Psi: \Omega_{\rm p} \to \Omega$ is a bijection from the polytope $\Omega_{\rm p}$ to $\Omega$ and 
$\texttt{N}_{\rm p}(\mathbf{a}): \Omega_{\rm p} \to    \Omega_{\rm p}$  satisfies
 $\texttt{N}_{\rm p}( \mathbf{a}) = 
 \texttt{id} + \sum_{i=1}^M (\mathbf{a})_i \varphi_i$  with
 $\varphi_i \cdot \mathbf{n} |_{\partial \Omega_{\rm p}} = 0$ for $i=1,\ldots,M$.
We propose an actionable strategy to define 
the polytope
$\Omega_{\rm p}$ and the mapping $\Psi$:
our approach relies on the definition 
of a coarse-grained curved high-order FE mesh $\mathcal{T}$ of $\Omega$, and to a 
 fully-automated procedure to define the polytope $\Omega_{\rm p}$ and the map $\Psi$ based on the curved mesh $\mathcal{T}$.
 
The bijection $\Psi$ is independent of the mapping coefficients $\mathbf{a}$ and is designed to recast the registration problem of interest into a polytope $\Omega_{\rm p}$ where affine maps can be employed.  Figure \ref{fig:geometric_interpretation_compositional_maps} provides a graphic representation of the sought mapping $\Psi$.
Consider the problem of finding a bijection of the 
semicircular domain $\Omega$ such that $\Phi(a) = b$:
first, we introduce the bijection $\Psi$ from the polytope (triangle) 
$\Omega_{\rm p}$ to the curved domain $\Omega$;
then, we seek a bijection $\Phi_{\rm p}$ of $\Omega_{\rm p}$ such that 
$\Phi_{\rm p}(a_{\rm p}) = b_{\rm p}$ with $a_{\rm p}=\Psi^{-1}(a)$ and $b_{\rm p}=\Psi^{-1}(b)$.
By construction, the resulting mapping
$\Phi = \Psi\circ \Phi_{\rm p} \circ \Psi^{-1}$ is a Lipschitz bijection of $\Omega$ that maps $a$ into $b$.
 
\begin{figure}[h!]
\centering
\begin{tikzpicture}[scale=0.75]
\linethickness{0.3 mm}

  \def\Radius{3}
  \path
    (-\Radius, 0) coordinate (A)
    -- coordinate (M)
    (\Radius, 0) coordinate (B)
    (M) +(60:\Radius) coordinate (C)
    +(120:\Radius) coordinate (D)
  ;
  \draw[ultra thick]
    (B) arc(0:180:\Radius) -- cycle
  ;
    
\draw[ultra thick]  (5,0)--(8,0)--(6.5,3)--(5,0);

\draw[fill] (2.6,1.5) circle (5pt);
\coordinate [label={below:  {\Large{$a$}}}] (E) at (2.5,1.4) ;
\draw[fill] (1.83,2.4) circle (5pt);
\coordinate [label={below:  {\Large{$b$}}}] (E) at (1.83,2.35) ;

\coordinate [label={center:  {\LARGE {$\Omega$}}}] (E) at (0.5, 1.5) ;

\coordinate [label={center:  {\LARGE {$\Omega_{\rm p}$}}}] (E) at (7, 1) ;

\draw[<-,ultra thick]  (3.05,1.5)--(4.95,1.5);
\coordinate [label={above:  {\LARGE {${\Psi}$}}}] (E) at (4, 1.8) ;

\draw[fill] (7.3,1.5) circle (5pt);
\draw[fill] (6.8,2.4) circle (5pt);
\coordinate [label={right:  {\Large{$a_{\rm p}$}}}] (E) at (7.3,1.5) ;
\coordinate [label={right:  {\Large{$b_{\rm p}$}}}] (E) at (6.8,2.4) ;

\end{tikzpicture}
 
\caption{compositional maps. The bijection $\Psi$ is designed to   recast the registration task from the curved domain $\Omega$ to the polytope $\Omega_{\rm p}$. 
}
\label{fig:geometric_interpretation_compositional_maps}
\end{figure}
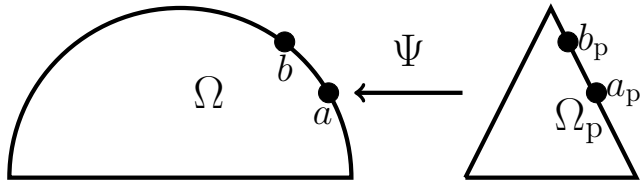 

We  exploit the analysis of registration in polytopes to study the properties of  maps of the form \eqref{eq:nonlinear_ansatz} and we discuss the approximation power.
We show (cf. Lemma \ref{th:density_composition})  that single-layer compositional maps cannot approximate arbitrary diffeomorphisms in curved domains.
 To improve the approximation power,
we  consider 
 the  multi-layer  generalization of \eqref{eq:nonlinear_ansatz} such that
\begin{equation}
\label{eq:nonlinear_ansatz_generalized}
\texttt{N}
\left(\mathbf{a}
=[\mathbf{a}_1,\ldots,\mathbf{a}_{\ell}]
\right)
 =
 \texttt{N}_1
\left( \mathbf{a}_1 \right)
\circ
\ldots
\circ
 \texttt{N}_{\ell} 
\left( \mathbf{a}_{\ell} \right);
\end{equation}
here,
$ \texttt{N}_{i} 
\left( \mathbf{a}_{i} \right)
=
 \Psi_i \circ  \texttt{N}_{{\rm p},i}(\mathbf{a}_i) \circ \Psi_i^{-1}
$ where
$ \Psi_i : \Omega_{{\rm p},i} \to \Omega$
is a Lipschitz bijection and
$\Omega_{{\rm p},i}$ is a suitable polytope, for $i=1,\ldots,\ell$.
In Lemma \ref{th:density_multilayer_composition}, we show the superior properties of the ansatz 
 \eqref{eq:nonlinear_ansatz_generalized}.

Our method is related to several previous works.
A first extension of the registration procedure in \cite{taddei2020registration} to arbitrary domains was proposed  in \cite{taddei2021registration}: the work of \cite{taddei2021registration}  relies on the introduction of a coarse-grained partition of the domain $\Omega$ and on Gordon-Hall maps to morph each element of the partition into the unit square.
The approach in \cite{taddei2021registration} requires that each element of the partition is mapped in itself (\emph{local bijectivity}) and relies on Gordon-Hall maps:
it is hence very sensitive to the choice of the coarse-grained partition,
it is not dense in any meaningful subset of diffeomorphisms 
 and it cannot be  extended to three-dimensional domains.
We also recall the work by Zahr and Persson 
\cite{zahr2020r} for high-order implicit shock tracking methods: the approach in \cite{zahr2020r} relies on a local parameterization of boundary degrees of freedom; for this reason, it cannot be readily combined with linear dimensionality reduction techniques to identify low-rank mapping spaces for  parametric systems.
We finally remark that the multi-layer ansatz 
\eqref{eq:nonlinear_ansatz_generalized} is 
closely related to registration  methods
appeared in the image processing literature
\cite{camion2001geodesic,cao2005large}.
 
We here rely on a standard $H^1$-conforming  FE discretization to represent the mapping; on the other hand, as in our previous works
(e.g., \cite{taddei2020registration,taddei2021registration}), we aim to  include an $H^2$ penalization term in the objective $\mathfrak{f}^{\rm obj}$.
Towards this end, we propose a discrete $H^2$ broken norm that is inspired by the work by Mozolevski  and coauthors \cite{mozolevski2007hp} on discontinuous Galerkin (DG) discretizations of the  biharmonic equation.
Following the seminal work by Argyris and coauthors 
\cite{argyris1968tuba},  several researchers have considered
 $H^2$-conforming FE spaces
 for the discretization of  fourth-order operators:
 the use of standard FE discretizations simplifies the implementation and also enables the application  of  polynomial bases of arbitrary  order.

The outline of the paper is as follows.
Section \ref{sec:affine_maps_polytopes} provides a complete analysis of  registration in polytopes;
section  \ref{sec:nonlinear_maps}  addresses the extension to curved domains;
section \ref{sec:methods} discusses the construction of   the polytope
$\Omega_{\rm p}$ and the mapping $\Psi$ and reviews the parametric registration method 
proposed in \cite{taddei2021registration} and employed in the numerical experiments;
section \ref{sec:numerics} contains   numerical investigations to illustrate the performance of the registration method  and its implications for model reduction.
We here couple our registration procedure with a non-intrusive (POD+regression) MOR procedure for state estimation; the integration of registration in the offline-online computational paradigm of projection-based MOR is the subject  of ongoing research (see \cite{barral2023registration}).
Section 
\ref{sec:conclusions} concludes the paper.

\section{Affine maps in polytopes}
\label{sec:affine_maps_polytopes} 
In order to find $\texttt{N}$ and  $\mathfrak{f}^{\rm pen}$
that satisfy \eqref{eq:desiderata_mapping_space}, we introduce the sets
\begin{equation}
\label{eq:admissible_maps_jacobian}
A:= \{  \mathbf{a}\in \mathbb{R}^M \, : \, \texttt{N}(\mathbf{a}) \in  \mathfrak{B} \},
\quad
A_{\rm jac} : = \{
\mathbf{a}\in \mathbb{R}^M : \inf_{x\in \Omega} J( \texttt{N}(\mathbf{a})    )>0
\}.
\end{equation}
Then, we require that the condition 
$ \inf_{x\in \Omega} J(\Phi)>0$ implies bijectivity in $\Omega$ for any mapping $\Phi$ spanned by $\texttt{N}$, that is 
$A_{\rm jac} \subset A$.
It is possible to construct maps $\Phi$ that are bijective in $\Omega$ for which
$\inf_{x\in \Omega} J(\Phi) =  0$  (cf. 
\cite[Theorem 1.1]{ruzhansky2015global}); 
however, bijections $\Phi$ which satisfy 
$\inf_{x\in \Omega} J(\Phi) =  0$ are of little practical interest for scientific computing applications and in particular for (projection-based) MOR.

Note that the pointwise condition 
$J( \texttt{N}(\mathbf{a}) )(x)>0$ for all $x\in \Omega$ cannot be directly translated into a penalty term for  \eqref{eq:tractable_optimization_based_registration}: 
we address the construction of the  penalty term in section  \ref{sec:penalty_function}.
In section \ref{sec:basic_theorems}, we present two technical results 
for affine maps in polytopes;
{in sections  \ref{sec:ansatz_explained_polytopes}
and  \ref{sec:approx_polytopes}, we construct operators 
$\texttt{N}$ that satisfy $A_{\rm jac} \subset A$ and are dense in a meaningful subspace of diffeomorphisms};
finally, in section \ref{sec:lipschitz_maps} we comment on the approximation of Lipschitz maps.

\subsection{Definition of the penalty function}
\label{sec:penalty_function}
We denote by $\|  \cdot \|_{\rm F}$ Frobenius norm, and by 
$H(w)$ the Hessian of the field $w$. Given the polytope $\Omega_{\rm p}$, the constants
$\epsilon>0$ and  $C_{\rm exp}$ such that $C_{\rm exp} \ll \epsilon$, we introduce the function $\mathfrak{f}_{\rm pen}^{(1)}(  \Phi ) = 
\mathfrak{f}_{\rm jac}(  \Phi ) \: + \; \| \nabla J(\Phi)  \|_{L^{\infty}(\Omega_{\rm p})}$ with 
\begin{equation} 
\label{eq:f_jac_tmp}
\mathfrak{f}_{\rm jac}(  \Phi )
=
\frac{1}{|\Omega_{\rm p}|}
\int_{\Omega_{\rm p}}
{\rm exp} \left(
\frac{\epsilon - J(\Phi)}{C_{\rm exp}} 
\right)
\, dx.
\end{equation}
We can show  that  there exist constants $C,C_{\rm exp}$ such that the condition $\mathfrak{f}_{\rm pen}^{(1)}(  \Phi )
\leq C$ implies that $J(\Phi) \geq \epsilon/2$ for all $x\in \Omega_{\rm p}$ (cf. \cite[section 2.2]{taddei2020registration}), that is $\{ \mathbf{a}:
\mathfrak{f}_{\rm pen}^{(1)}(  \texttt{N}(\mathbf{a})  ) \leq C \} \subset A_{\rm jac}$.
Recalling Jacobi's formula, we find that 
$\| \nabla J(\Phi)   \|_2 \leq C \|  \nabla \Phi    \|_{\rm F}
\| H(\Phi)   \|_{\rm F}$, for 
some constant $C$ that is independent of $\nabla \Phi$;
 therefore, we can replace  $\| \nabla J(\Phi)  \|_{L^{\infty}(\Omega_{\rm p})}$
  with 
 $\| H(\Phi)  \|_{L^{\infty}(\Omega_{\rm p})}$, to obtain
 \begin{equation}
\label{eq:f_pen_theory}
\mathfrak{f}_{\rm pen}^{\rm th}(  \Phi )
= \mathfrak{f}_{\rm jac}(  \Phi ) + \| H(\Phi)  \|_{L^{\infty}(\Omega_{\rm p})}^2.
\end{equation}
The penalty \eqref{eq:f_pen_theory} is defined for elements of the Sobolev space
$W^{2,\infty}(\Omega) = \{ v\in L^{\infty}(\Omega): \nabla v, H(v) \in  L^{\infty}(\Omega) \}$.
We anticipate that the penalty \eqref{eq:f_pen_theory}   is still not fully  amenable for 
computations: we address the problem of constructing an actionable penalty for \eqref{eq:tractable_optimization_based_registration} in section \ref{sec:methods}.

\subsection{Mathematical background}
\label{sec:basic_theorems}
We say that $\Omega_{\rm p} \subset \mathbb{R}^2$ is a polytope if the boundary of   $\Omega_{\rm p}$,
   $\partial \Omega_{\rm p}$,  consists of a finite number of flat sides (\emph{faces}).  
 The boundary of two-dimensional polytopes  is described by a finite number of straight segments (\emph{edges}) connected to form (possibly several disjoint)  closed polygonal chains; the points of  $\partial \Omega_{\rm p}$ where two consecutive 
non-parallel
 edges meet are dubbed \emph{vertices}; the union of all vertices is here denoted by $V = \{ x_i^{\rm v} \}_{i=1}^{N_{\rm v}}$.  We   consider polytopes that satisfy the  condition below:
note that Definition \ref{def:regular_polytopes} implies that each   vertex $ x^{\rm v}$ 
of a regular polytope 
 is the intersection of two edges 
  (cf. Figure \ref{fig:geometric_definition}).
 
\begin{Definition}
\label{def:regular_polytopes}
The bounded polytope $\Omega_{\rm p}$  is said to be regular if there exist
$N+1$ bounded polytopes
$\Omega_{{\rm int},1}, \ldots,$ 
$\Omega_{{\rm int},N}$, 
$\Omega_{\rm ext}$
such that
\textbf{(i)}
 $\Omega_{\rm p} = \Omega_{\rm ext} \setminus \bigcup_{i=1}^N \Omega_{{\rm int},i}$, 
 \textbf{(ii)}
$\Omega_{{\rm int},1}, \ldots,\Omega_{{\rm int},N}$ are pairwise disjoint polytopes that are compactly embedded in 
$\Omega_{\rm ext}$ and 
\textbf{(iii)}
$\Omega_{\rm ext},\Omega_{{\rm int},1},\ldots, \Omega_{{\rm int},N}$ are isomorphic to the unit ball.
 \end{Definition}
 
\begin{figure}[h!]
\centering
\subfloat[]{
\centering
\begin{tikzpicture}[scale=0.8]
\linethickness{0.3 mm}
\linethickness{0.3 mm}

\filldraw[pattern=north west lines, pattern color=blue] 
(0,0)--(3,1)--(4,2)--(2,1.5)--(3,2.5)--(0,2)--(-1,0)--(0,0);

\filldraw[fill=white]  (-0.25,0.5)--(2,0.9)--(0,1)--(-0.25,0.5);



\coordinate [label={right:  {\LARGE {$\Omega_{\rm p}$}}}] (E) at (4, 2) ;
\end{tikzpicture}
}
~~~
\subfloat[]{
\begin{tikzpicture}[scale=0.8]
\linethickness{0.3 mm}
\linethickness{0.3 mm}

\filldraw[pattern=north west lines, pattern color=blue] 
(0,0)--(3,1)--(4,2)--(2,1.5)--(3,2.5)--(0,2)--(-1,0)--(0,0);

\filldraw[fill=white]  (-0.25,0.5)--(2,1.5)--(0,1)--(-0.25,0.5);

\filldraw (2,1.5) circle (3pt);


\coordinate [label={right:  {\LARGE {$\Omega_{\rm p}$}}}] (E) at (4, 2) ;
\end{tikzpicture}

}
\caption{interpretation of Definition
\ref{def:regular_polytopes}.
(a) regular polytope.
(b) irregular polytope.
}
\label{fig:geometric_definition}
\end{figure}
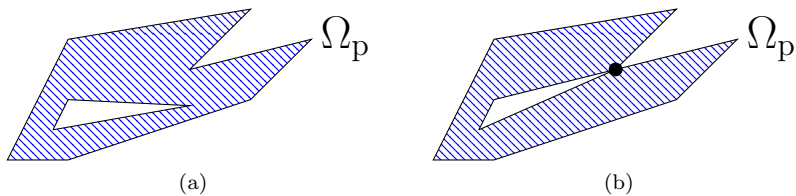

We denote by $\mathbf{n}(x)$ the outward normal to $\Omega_{\rm p}$ at $x\in \partial \Omega_{\rm p}$; 
we also denote by $\overline{\Omega}_{\rm p}$ the closure of $\Omega_{\rm p}$ in $\mathbb{R}^2$.
Propositions \ref{th:bijectivity} and \ref{th:vertices_plus_density} 
contain two important results for  mappings  in polytopes.
The proof of Proposition  \ref{th:bijectivity} is technical and is postponed to Appendix \ref{sec:proofs}.

\begin{proposition}
\label{th:bijectivity}
Let   $\Omega_{\rm p}$ be a regular bounded polytope.
Define the space
${\mathfrak{U}}_0={\mathfrak{U}}_0(\Omega_{\rm p}) = \{ \varphi \in C^1(\overline{\Omega}_{\rm p}; \mathbb{R}^2) \,: \, \varphi \cdot \mathbf{n} |_{\partial \Omega_{\rm p}} = 0 \}$ and consider the vector-valued function
$\Phi = \texttt{id} + \varphi$ with $\varphi\in {\mathfrak{U}}_0$. Then, $\Phi$ is a bijection in $\Omega_{\rm p}$ if $\min_{x\in \overline{\Omega}_{\rm p}} J(\Phi) > 0$. 
\end{proposition}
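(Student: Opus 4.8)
The plan is to reduce the statement to a degree-theoretic argument, the bulk of the work being to show that the constraint $\varphi\cdot\mathbf{n}|_{\partial\Omega_{\rm p}}=0$ together with $J(\Phi)>0$ forces $\Phi$ to restrict to an orientation-preserving homeomorphism of $\partial\Omega_{\rm p}$ that maps each edge onto itself.

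First I would treat the boundary. At a vertex $x^{\rm v}\in V$, regularity of $\Omega_{\rm p}$ means exactly two edges meet there, with non-parallel outward normals $\mathbf{n}_1,\mathbf{n}_2$; continuity of $\varphi$ and the constraint give $\varphi(x^{\rm v})\cdot\mathbf{n}_1=\varphi(x^{\rm v})\cdot\mathbf{n}_2=0$, and since $\{\mathbf{n}_1,\mathbf{n}_2\}$ is a basis of $\mathbb{R}^2$ this forces $\varphi(x^{\rm v})=0$, i.e.\ $\Phi(x^{\rm v})=x^{\rm v}$. On a single edge $F=[a,b]$ with $a,b\in V$, constant outward normal $\mathbf{n}_F$ and unit tangent $\mathbf{t}_F$ oriented from $a$ to $b$, I would parametrize $F$ by arc length $s\in[0,L]$, $L=|b-a|$, and differentiate the identity $\varphi(x(s))\cdot\mathbf{n}_F=0$ along $F$ to obtain $(\nabla\varphi(x(s))\,\mathbf{t}_F)\cdot\mathbf{n}_F=0$; hence $\nabla\Phi(x(s))\,\mathbf{t}_F=g'(s)\,\mathbf{t}_F$ with $g(s):=\Phi(x(s))\cdot\mathbf{t}_F$, and writing $\nabla\Phi(x(s))$ in the orthonormal frame $(\mathbf{t}_F,\mathbf{n}_F)$ shows $J(\Phi)(x(s))=g'(s)\,\delta(s)$, where $\delta$ is the $\mathbf{n}_F$--$\mathbf{n}_F$ entry. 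Since $J(\Phi)>0$ on $\overline{\Omega}_{\rm p}$ and $g',\delta$ are continuous, $g'$ has no zeros and hence constant sign on $[0,L]$; as $g(L)-g(0)=(b-a)\cdot\mathbf{t}_F=L>0$ (using $\Phi(a)=a$, $\Phi(b)=b$), we get $g'>0$, so $g$ is a strictly increasing bijection of $[0,L]$ and $\Phi$ maps $F$ homeomorphically, orientation-preservingly, onto $F$. Consequently $\Phi(\partial\Omega_{\rm p})=\partial\Omega_{\rm p}$, $\Phi|_{\partial\Omega_{\rm p}}$ is a homeomorphism, and on each of the Jordan curves $\partial\Omega_{\rm ext},\partial\Omega_{{\rm int},1},\ldots,\partial\Omega_{{\rm int},N}$ it is homotopic, through loops contained in that curve, to the identity parametrization (via the straight-line homotopy between $g$ and the identity on each edge, whose values stay inside the edge).

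Next I would invoke degree theory. Since $J(\Phi)>0$ on $\overline{\Omega}_{\rm p}$, the inverse function theorem makes $\Phi$ open and locally injective on $\Omega_{\rm p}$, every $y$ is a regular value, and $\Phi^{-1}(y)$ is finite. For $y\in\Omega_{\rm p}$ one has $y\notin\Phi(\partial\Omega_{\rm p})=\partial\Omega_{\rm p}$, so $\deg(\Phi,\Omega_{\rm p},y)$ is defined and equals the total winding number about $y$ of the image under $\Phi$ of $\partial\Omega_{\rm p}$ with the boundary orientation of $\Omega_{\rm p}$ (counterclockwise on $\partial\Omega_{\rm ext}$, clockwise on each $\partial\Omega_{{\rm int},i}$); using the homotopies above, which live in $\partial\Omega_{\rm p}$ and hence avoid $y$, this equals the corresponding quantity for $\texttt{id}$, namely $1$ for $y\in\Omega_{\rm p}$ and $0$ for $y$ in the unbounded component of $\mathbb{R}^2\setminus\overline{\Omega}_{\rm p}$ or inside one of the holes $\Omega_{{\rm int},i}$. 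Because every value is regular, $\deg(\Phi,\Omega_{\rm p},y)=\#\{x\in\Omega_{\rm p}:\Phi(x)=y\}$: it equals $1$ for all $y\in\Omega_{\rm p}$, which gives $\Omega_{\rm p}\subseteq\Phi(\Omega_{\rm p})$ and injectivity of $\Phi$ on $\Omega_{\rm p}$; and it equals $0$ for all $y\notin\overline{\Omega}_{\rm p}$ and all $y$ in a hole, so $\Phi(\Omega_{\rm p})$ misses those points. Since $\Phi(\Omega_{\rm p})$ is open it must also miss $\partial\Omega_{\rm p}$ (otherwise openness would push image points outside $\overline{\Omega}_{\rm p}$), whence $\Phi(\Omega_{\rm p})\subseteq\Omega_{\rm p}$; combining, $\Phi$ is a bijection of $\Omega_{\rm p}$ (indeed a $C^1$-diffeomorphism).

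I expect the main obstacle to be the edge-preservation step: the constraint $\varphi\cdot\mathbf{n}|_{\partial\Omega_{\rm p}}=0$ only confines the image of an edge to its supporting line, and for a non-convex $\Omega_{\rm p}$ that line may pass through the interior of the domain, so one genuinely needs to combine the fixed-vertex property with the sign of $J(\Phi)$ to rule out the image of an edge sliding off itself. After that the degree computation is standard, though the bookkeeping of orientations over the several boundary components of a multiply connected $\Omega_{\rm p}$ still requires some care.
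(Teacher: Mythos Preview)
Your proof is correct and takes a genuinely different route from the paper's. Both share the boundary analysis (your first step is essentially the paper's Lemma~A.1): $\Phi$ fixes each vertex and restricts to an orientation-preserving bijection of each edge onto itself. The paths diverge in how the multiply connected domain is handled. The paper fills in the holes: it invokes Cerf's extension theorem to extend $\Phi|_{\partial\Omega_{{\rm int},i}}$ to a diffeomorphism of each $\Omega_{{\rm int},i}$, glues to obtain a map on the simply connected $\Omega_{\rm ext}$, builds a technical $C^1$ smoothing across the interior interfaces (Lemma~A.2) so that a Hadamard-type global inverse theorem (Proposition~A.1) applies to the smoothed maps, and concludes by a limiting argument for injectivity and surjectivity. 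You bypass all of this with Brouwer degree: once $\Phi(F)=F$ on each edge, the linear homotopy $H_t=\texttt{id}+(1-t)\varphi$ keeps $\partial\Omega_{\rm p}$ in itself for every $t$ (each $H_t(x)$ is a convex combination of $x\in F$ and $\Phi(x)\in F$), so $\deg(\Phi,\Omega_{\rm p},y)=\deg(\texttt{id},\Omega_{\rm p},y)$ for every $y\notin\partial\Omega_{\rm p}$; positivity of $J(\Phi)$ then turns the degree into an exact preimage count. Your argument is substantially shorter and avoids Cerf's theorem, the delicate gluing lemma, and the limiting step; degree theory absorbs the multiple boundary components with no additional bookkeeping beyond the orientation conventions you already note. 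The paper's route stays within the Hadamard framework of the authors' earlier work on simply connected domains, which explains the detour through hole-filling and approximation.
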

 
\begin{proposition}
\label{th:vertices_plus_density}
Let $\Phi$ be a  diffeomorphism   in $\Omega_{\rm p}$. Then, $\Phi(V) = V$. Furthermore, if $\Phi(x^{\rm v}) = x^{\rm v}$ for all $x^{\rm v}\in V$, then $\Phi = \texttt{id} + \varphi$ with $\varphi\in \mathfrak{U}_0$. 
\end{proposition}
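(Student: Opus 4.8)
The plan is to prove the two assertions separately, using a mixture of topological and boundary-regularity arguments.

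For the first claim, $\Phi(V)=V$: the key observation is that diffeomorphisms preserve local geometric regularity, so a diffeomorphism of $\overline{\Omega}_{\rm p}$ must send boundary points to boundary points and, more importantly, must preserve the ``degree of non-smoothness'' of the boundary at each point. Concretely, I would argue that $\Phi$ restricts to a diffeomorphism of $\partial\Omega_{\rm p}$ onto itself (this follows because $\Phi$ is a homeomorphism of $\overline{\Omega}_{\rm p}$, hence maps the topological boundary to the topological boundary, and $\Phi\in C^1$ with $J(\Phi)>0$ so the restriction to $\partial\Omega_{\rm p}$ is $C^1$ with non-vanishing tangential derivative). A point $x\in\partial\Omega_{\rm p}$ lies in the interior of an edge if and only if $\partial\Omega_{\rm p}$ is a $C^1$ (indeed $C^\infty$) one-dimensional submanifold in a neighborhood of $x$; a vertex is a point where this fails, i.e. where the unit tangent vector is discontinuous along the boundary curve. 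Since $\Phi$ is a $C^1$ diffeomorphism with $\nabla\Phi$ invertible and continuous up to the boundary, it maps $C^1$-smooth boundary arcs to $C^1$-smooth boundary arcs and must map the finite set of ``corner'' points bijectively onto itself. Because $\Omega_{\rm p}$ is \emph{regular} in the sense of Definition \ref{def:regular_polytopes}, every vertex is exactly such a corner point, and the set of corners is precisely $V$; hence $\Phi(V)=V$.

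For the second claim, assume $\Phi(x^{\rm v})=x^{\rm v}$ for all $x^{\rm v}\in V$, and set $\varphi:=\Phi-\texttt{id}$. Then $\varphi\in C^1(\overline{\Omega}_{\rm p};\mathbb{R}^2)$ is immediate; the work is to show $\varphi\cdot\mathbf{n}|_{\partial\Omega_{\rm p}}=0$. Fix an edge $e$ of $\partial\Omega_{\rm p}$ with endpoints $x^{\rm v}_i,x^{\rm v}_j\in V$ and constant outward normal $\mathbf{n}_e$. By the first part, $\Phi$ maps $\partial\Omega_{\rm p}$ onto itself preserving the corner set; since $\Phi$ fixes every vertex and is a homeomorphism of the polygonal boundary chain, it must map each edge onto itself (the edges are the connected components of $\partial\Omega_{\rm p}\setminus V$, and a homeomorphism fixing all of $V$ permutes these components while fixing their endpoints, hence fixes each component setwise — here one uses that consecutive edges are non-parallel so the combinatorial structure is rigid). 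Therefore for every $x\in e$ we have $\Phi(x)\in e$, i.e. $\Phi(x)$ and $x$ both lie on the affine line containing $e$, so $(\Phi(x)-x)\cdot\mathbf{n}_e=0$, i.e. $\varphi(x)\cdot\mathbf{n}(x)=0$. Ranging over all edges $e$ gives $\varphi\cdot\mathbf{n}|_{\partial\Omega_{\rm p}}=0$, hence $\varphi\in\mathfrak{U}_0$.

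The main obstacle is making rigorous the claim that a $C^1$-diffeomorphism of $\overline{\Omega}_{\rm p}$ cannot move a vertex into the interior of an edge or vice versa — i.e. that it genuinely preserves the corner structure of the boundary. The clean way to do this is to characterize corners intrinsically: $x\in\partial\Omega_{\rm p}$ is a vertex iff there is no neighborhood $U$ of $x$ in $\overline{\Omega}_{\rm p}$ and no $C^1$-diffeomorphism carrying $(U,U\cap\partial\Omega_{\rm p})$ onto a relatively open subset of a half-plane and its boundary line. Since a $C^1$-diffeomorphism of $\overline{\Omega}_{\rm p}$ transports such a local model bijectively, the property ``being a vertex'' is invariant, and then one invokes regularity of $\Omega_{\rm p}$ to identify $V$ with the corner set exactly. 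One must also be careful with the case $N\ge 1$ (several disjoint polygonal chains from the holes $\Omega_{{\rm int},i}$): the argument is purely local along $\partial\Omega_{\rm p}$ and the decomposition in Definition \ref{def:regular_polytopes} guarantees each chain is a genuine polygon with the same vertex/edge dichotomy, so nothing changes. Finally, one should note that $\Phi$ being a diffeomorphism of the open domain which extends to a $C^1$ map on $\overline\Omega_{\rm p}$ with $\det\nabla\Phi>0$ on $\overline\Omega_{\rm p}$ suffices for the boundary restriction to be a $C^1$ immersion, which is all the above needs.
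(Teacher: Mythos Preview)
Your proof is correct and follows the same overall architecture as the paper's: characterize vertices as the points where $\partial\Omega_{\rm p}$ fails to be $C^1$, use that diffeomorphisms preserve this property to get $\Phi(V)=V$, then show each edge is mapped to itself and conclude $\varphi\cdot\mathbf{n}=0$ edge by edge. The implementations differ slightly in two places. For vertex preservation, the paper makes the argument concrete via Nanson's formula $\mathbf{n}_\Phi(y)\propto J(\Phi)(\nabla\Phi)^{-\top}\mathbf{n}$, which directly shows that discontinuities of $\mathbf{n}$ are carried to discontinuities of $\mathbf{n}_\Phi$; your intrinsic ``no local half-plane model'' characterization is equivalent but more abstract. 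For edge preservation, the paper again invokes Nanson (continuity of $\mathbf{n}_\Phi$ on $\Phi(F)$ forces $\Phi(F)=F$ rather than the complementary arc, which contains at least one extra vertex), whereas your connected-components argument---$\Phi$ permutes the components of $\partial\Omega_{\rm p}\setminus V$ and fixes their endpoints, hence fixes each setwise---is purely topological and arguably more transparent. Both routes are short and neither offers a substantive advantage over the other.
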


\begin{proof}
Let $x^{\rm v}\in V$ and define $y = \Phi(x^{\rm v})$; recalling Nanson's  formula \cite[page 7]{marsden1994mathematical}, we find that the normal to $\Phi(\Omega_{\rm p})$ satisfies 
$\mathbf{n}_{\Phi}(y)
\propto
J( \Phi(x)   ) (\nabla \Phi(x))^{-T} \mathbf{n}(x);
$
therefore, if $x\in \partial \Omega_{\rm p} \mapsto \mathbf{n}(x)$ is discontinuous at  $x^{\rm v}$, we must have that $x \mapsto  \mathbf{n}_{\Phi}(x) $
is discontinuous at  $y$. Recalling the definition of vertices, we conclude that $y\in V$ for any $x^{\rm v}\in V$, that is $\Phi(V) \subset V$. Since $\Phi$ is a bijection in $\Omega_{\rm p}$, the image points $\{ \Phi(x^{\rm v}): x^{\rm v}\in V  \}$ should all be distinct and thus
${\rm card} (\Phi(V)) = {\rm card} (V)$, which implies
 $\Phi(V)=V$.
 
 Let $\Phi$ satisfy the condition $x^{\rm v} = \Phi(x^{\rm v})$
for all $x^{\rm v}\in V$ and define the displacement $\varphi = \Phi-\texttt{id}$. Consider the edge $F \subset \partial \Omega_{\rm p}$ and consider the parameterization $\gamma_{\rm f}:[0,1]\to F$ such that
$\gamma_{\rm f}(s) = x_1^{\rm v} + s \|x_2^{\rm v} -x_1^{\rm v}   \|_2 \mathbf{t}_{\rm f}$. Since $\partial \Omega_{\rm p}$ is closed and $\Phi(\partial \Omega_{\rm p}) = \partial \Omega_{\rm p}$, we either have
$\Phi(F) = \partial \Omega_{\rm p}\setminus F$ or 
$\Phi(F) =   F$. Recalling the expression for the normal 
$\mathbf{n}_{\Phi}$, we find that $\mathbf{n}_{\Phi}$ is continuous in $\Phi(F)$: since $ \partial \Omega_{\rm p}\setminus F$ contains at least one vertex in addition to $x_1^{\rm v},x_2^{\rm v}$ (we here exploit the fact that two-dimensional polytopes have at least three vertices), we must have 
$\Phi(F) =   F$.

The condition $\Phi(F) =   F$ implies that $\Phi(\gamma_{\rm f}(s)) = x_1^{\rm v} + \alpha(s) \mathbf{t}_{\rm f}$ for all $s\in [0,1]$ and for some injective function $\alpha:[0,1] \to [0, \|x_2^{\rm v} -x_1^{\rm v}   \|_2]$. Given $x\in F$, 
since $\mathbf{n}(x) = \mathbf{n}_{\rm f}$ and 
$\mathbf{n}_{\rm f}\perp \mathbf{t}_{\rm f}$, we find
$$
\mathbf{n}(x) \cdot \varphi(x)
=
\mathbf{n}_{\rm f} \cdot
\left(
\Phi(x) - x
\right)
=
\mathbf{n}_{\rm f} \cdot
\left(
x_1^{\rm v} + \alpha(s) \mathbf{t}_{\rm f}
-
x_1^{\rm v} -  s \|x_2^{\rm v} -x_1^{\rm v}   \|_2  \mathbf{t}_{\rm f}
\right)
= 0,
$$
which is the desired result.
\end{proof}

\subsection{Construction of finite-dimensional operators for  registration}
\label{sec:ansatz_explained_polytopes}
We introduce the finite-dimensional space 
$\mathcal{U}$ spanned by
$\{ \varphi_i  \}_{i=1}^{M} \subset {\mathfrak{U}}_0$, and the affine space
$\texttt{id} +\mathcal{U}$.
We introduce the operator
$\texttt{N}_{\rm p}: \mathbb{R}^M \to \texttt{id} +\mathcal{U}$ such that
$\texttt{N}_{\rm p}(\mathbf{a}) = \texttt{id} + \sum_{i=1}^M (\mathbf{a})_i \varphi_i$. 
Exploiting Proposition \ref{th:bijectivity}  and the discussion in section  \ref{sec:penalty_function}, we   show 
in Corollary \ref{th:easy_bijectivity_consequence}
that the operator $\texttt{N}_{\rm p}$ and the penalty \eqref{eq:f_pen_theory}
 satisfy \eqref{eq:desiderata_mapping_space}:
the stronger regularity assumption is required by  the choice of the penalty.
The proof is straightforward and is here omitted. 

\begin{corollary}
\label{th:easy_bijectivity_consequence}
Let 
$\mathcal{U} = {\rm span}
\{  \varphi_i \}_{i=1}^M$ be an $M$-dimensional subspace of  ${\mathfrak{U}}_0 \cap W^{2,\infty}(\Omega_{\rm p})$. Then, the affine operator $\texttt{N}_{\rm p}$ and the penalty 
$\mathbf{a} \mapsto \mathfrak{f}_{\rm pen}^{\rm th}(\texttt{N}_{\rm p}(\mathbf{a}))$ 
(cf.  \eqref{eq:f_pen_theory})  satisfy \eqref{eq:desiderata_mapping_space}.
\end{corollary}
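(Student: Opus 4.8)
The plan is to verify the three conditions in \eqref{eq:desiderata_mapping_space} directly, using Proposition \ref{th:bijectivity} for the bijectivity requirement and the structure of the penalty \eqref{eq:f_pen_theory} together with the linearity of $\texttt{N}_{\rm p}$ for the remaining two. Throughout, $\mathcal{B}$ plays the role of $\mathfrak{B}$ (Lipschitz bijections of $\Omega_{\rm p}$), and we work on $\Omega = \Omega_{\rm p}$.

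First I would establish \eqref{eq:desiderata_mapping_space}$_1$. By construction every $\varphi_i \in {\mathfrak{U}}_0$, so any $\Phi = \texttt{N}_{\rm p}(\mathbf{a}) = \texttt{id} + \sum_i (\mathbf{a})_i \varphi_i$ satisfies $\Phi = \texttt{id} + \varphi$ with $\varphi \in {\mathfrak{U}}_0$; moreover, since $\mathcal{U} \subset W^{2,\infty}(\Omega_{\rm p})$, the map $\Phi$ lies in $W^{2,\infty}$ and in particular $J(\Phi), \nabla J(\Phi)$ are well defined, and $\Phi \in C^1(\overline{\Omega}_{\rm p}; \mathbb{R}^2)$ as required in Proposition \ref{th:bijectivity}. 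As recalled in section \ref{sec:penalty_function}, there exist constants $C, C_{\rm exp}$ such that $\mathfrak{f}_{\rm pen}^{\rm th}(\Phi) \le C$ forces $\mathfrak{f}_{\rm jac}(\Phi)\le C$, hence $J(\Phi) \ge \epsilon/2 > 0$ on $\overline{\Omega}_{\rm p}$ (the replacement of $\|\nabla J(\Phi)\|_{L^\infty}$ by $\|H(\Phi)\|_{L^\infty}^2$ via Jacobi's formula only strengthens the control, so the same implication holds for $\mathfrak{f}_{\rm pen}^{\rm th}$). Then Proposition \ref{th:bijectivity} applies and yields $\Phi \in \mathcal{B}$. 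This shows $\mathcal{B}_{\texttt{N}_{\rm p}} := \{ \texttt{N}_{\rm p}(\mathbf{a}) : \mathfrak{f}_{\rm pen}^{\rm th}(\texttt{N}_{\rm p}(\mathbf{a})) \le C \} \subset \mathcal{B}$.

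Next, \eqref{eq:desiderata_mapping_space}$_2$: $\texttt{N}_{\rm p}(\mathbf{0}) = \texttt{id}$ by definition, and $J(\texttt{id}) \equiv 1$, $H(\texttt{id}) \equiv 0$, so $\mathfrak{f}_{\rm pen}^{\rm th}(\texttt{id}) = \exp((\epsilon-1)/C_{\rm exp})$; choosing $\epsilon \le 1$ (or absorbing the value into the threshold, since $C$ is at our disposal as long as the implication of section \ref{sec:penalty_function} is respected) gives $\mathfrak{f}_{\rm pen}^{\rm th}(\texttt{id}) < C$ strictly, so $\mathbf{a}=\mathbf{0}$ is an interior feasible point. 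For \eqref{eq:desiderata_mapping_space}$_3$, the Lipschitz continuity of $\texttt{N}_{\rm p}$ (as a map $\mathbb{R}^M \to W^{2,\infty}$, equivalently $\to {\rm Lip}(\Omega_{\rm p};\mathbb{R}^2)$) is immediate: it is affine with finite-dimensional linear part, $\|\texttt{N}_{\rm p}(\mathbf{a}) - \texttt{N}_{\rm p}(\mathbf{b})\|_{W^{2,\infty}} \le \big(\sum_i \|\varphi_i\|_{W^{2,\infty}}^2\big)^{1/2}\|\mathbf{a}-\mathbf{b}\|_2$. For the penalty, note $\mathbf{a} \mapsto \nabla \texttt{N}_{\rm p}(\mathbf{a})$ and $\mathbf{a}\mapsto H(\texttt{N}_{\rm p}(\mathbf{a}))$ are affine hence Lipschitz into $L^\infty$; $J(\Phi)$ is a quadratic polynomial in the entries of $\nabla\Phi$, so $\mathbf{a}\mapsto J(\texttt{N}_{\rm p}(\mathbf{a}))$ is locally Lipschitz into $L^\infty$; composing with the (locally Lipschitz) exponential and integrating over the bounded set $\Omega_{\rm p}$ shows $\mathfrak{f}_{\rm jac}\circ\texttt{N}_{\rm p}$ is locally Lipschitz, and $\mathbf{a}\mapsto \|H(\texttt{N}_{\rm p}(\mathbf{a}))\|_{L^\infty}^2$ is locally Lipschitz as the square of a Lipschitz function. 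Hence $\mathbf{a}\mapsto \mathfrak{f}_{\rm pen}^{\rm th}(\texttt{N}_{\rm p}(\mathbf{a}))$ is locally Lipschitz continuous, which suffices.

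The only genuinely delicate point is the first one, and there the work has already been done: it rests entirely on Proposition \ref{th:bijectivity} (whose proof is deferred to the appendix) and on the quoted estimate from \cite[section 2.2]{taddei2020registration} relating sublevel sets of $\mathfrak{f}_{\rm jac}$ to a uniform lower bound on $J$. So the remaining argument is a bookkeeping exercise assembling these ingredients, which is why the paper declares the proof straightforward and omits it; I would include only the short verification above.
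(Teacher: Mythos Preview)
Your verification is correct and follows exactly the route the paper indicates: invoke Proposition~\ref{th:bijectivity} together with the implication $\mathfrak{f}_{\rm pen}^{\rm th}(\Phi)\le C \Rightarrow J(\Phi)\ge\epsilon/2$ from section~\ref{sec:penalty_function} for \eqref{eq:desiderata_mapping_space}$_1$, and handle \eqref{eq:desiderata_mapping_space}$_2$--\eqref{eq:desiderata_mapping_space}$_3$ by the affine structure of $\texttt{N}_{\rm p}$. The paper omits the proof entirely as ``straightforward,'' so your write-up is precisely the expanded version; your observation that only \emph{local} Lipschitz continuity can hold (and suffices for the intended conclusion that $\mathcal{B}_{\texttt{N}}$ has nonempty interior) is a useful clarification.
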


\subsection{Approximation properties of affine mappings}
\label{sec:approx_polytopes}
Proposition \ref{th:vertices_plus_density} shows that bijections are of the form
$\Phi  = \texttt{id} + \varphi$ with $\varphi\in {\mathfrak{U}}_0$ if  $\Phi(x^{\rm v}) =x^{\rm v}$ for all $x^{\rm v}\in  V$  (i.e., $\Phi|_V = \texttt{id}$).  Since $\varphi|_V =0$ for all $\varphi\in {\mathfrak{U}}_0$, diffeomorphisms  $\Phi$ that do not satisfy   $\Phi|_V = \texttt{id}$ do not belong to  
$\texttt{id} + {\mathfrak{U}}_0$. It is easy to construct bijections that do not fulfill the requirement
$\Phi|_V = \texttt{id}$: to provide a concrete example, consider the map $\Phi(x) = - x$ for the polytope $\Omega_{\rm p} = (-1,1)^2$. 
However, in the setting of MOR and also geometry reduction, we are interested in parametric maps that are smooth deformations of the identity map:
since registration is applied with respect to an element of the solution manifold
$\mathcal{M}=\{u_{\mu} : \mu\in \mathcal{P} \}$,
we can indeed assume that there exists $\mu\in \mathcal{P}$ such that $\Phi_{\mu}=\texttt{id}$. 
We hence have the following result, which follows from 
Proposition \ref{th:vertices_plus_density}.

\begin{corollary}
\label{th:easy_density_consequence}
Let $\Phi: \Omega_{\rm p} \times \mathcal{P} \to \Omega_{\rm p}$ be a continuous function of the parameter $\mu\in \mathcal{P}$  and let 
$\Phi_{\mu}$ be a 
   diffeomorphism for all 
$\mu\in \mathcal{P}$.
Assume that   $\Phi_{\mu'}=\texttt{id}$ for some $\mu'\in \mathcal{P}$ and that $\mathcal{P}$ is simply connected. 
Then, 
$\Phi_{\mu}  =\texttt{id} +\varphi_{\mu}$ with 
$\varphi_{\mu}\in {\mathfrak{U}}_0$ for all
$\mu \in \mathcal{P}$.
\end{corollary}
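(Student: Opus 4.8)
The plan is to reduce Corollary \ref{th:easy_density_consequence} to Proposition \ref{th:vertices_plus_density} via a connectedness/continuity argument. By Proposition \ref{th:vertices_plus_density}, for each fixed $\mu\in\mathcal{P}$ the diffeomorphism $\Phi_{\mu}$ permutes the vertex set $V$; that is, there is a map $\mu\mapsto\sigma_{\mu}$ from $\mathcal{P}$ into the (finite) symmetric group on $V$ such that $\Phi_{\mu}(x_i^{\rm v})=\sigma_{\mu}(x_i^{\rm v})$ for all $i$. Since $\mu\mapsto\Phi_{\mu}$ is continuous and the vertices are isolated points of $V$, the map $\mu\mapsto\sigma_{\mu}$ is locally constant: indeed, for $\mu$ near a fixed $\mu_0$, $\Phi_{\mu}(x_i^{\rm v})$ is close to $\Phi_{\mu_0}(x_i^{\rm v})=\sigma_{\mu_0}(x_i^{\rm v})\in V$, hence (since distinct vertices are separated by a positive distance $\delta=\min_{j\neq k}\|x_j^{\rm v}-x_k^{\rm v}\|_2>0$) it must equal $\sigma_{\mu_0}(x_i^{\rm v})$. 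Therefore $\mu\mapsto\sigma_{\mu}$ is a continuous function from $\mathcal{P}$ into a discrete space.

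Because $\mathcal{P}$ is simply connected — in particular connected — any locally constant (equivalently, continuous into a discrete target) function on $\mathcal{P}$ is globally constant, so $\sigma_{\mu}\equiv\sigma_{\mu'}$ for all $\mu\in\mathcal{P}$. By hypothesis $\Phi_{\mu'}=\texttt{id}$, so $\sigma_{\mu'}$ is the identity permutation of $V$, and consequently $\Phi_{\mu}(x^{\rm v})=x^{\rm v}$ for every $x^{\rm v}\in V$ and every $\mu\in\mathcal{P}$. Applying the second part of Proposition \ref{th:vertices_plus_density} to each $\Phi_{\mu}$ then yields $\Phi_{\mu}=\texttt{id}+\varphi_{\mu}$ with $\varphi_{\mu}\in\mathfrak{U}_0$ for all $\mu\in\mathcal{P}$, which is the claim.

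The argument uses only connectedness of $\mathcal{P}$, not the full strength of simple connectedness; I would keep the simply connected hypothesis as stated (it is presumably needed elsewhere, or kept for uniformity with companion results on path-lifting of the mapping field) and simply remark that connectedness suffices here. The only genuinely delicate point is the local constancy of $\mu\mapsto\sigma_{\mu}$: one must make sure the continuity of $\mu\mapsto\Phi_{\mu}$ is in a topology strong enough to control pointwise values at the vertices (e.g. $C^0$ or stronger on $\overline{\Omega}_{\rm p}$, which is what "continuous function of the parameter" should mean here), after which the separation bound $\delta>0$ closes the gap. Everything else is a routine application of the two conclusions of Proposition \ref{th:vertices_plus_density}.
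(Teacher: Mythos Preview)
Your proposal is correct and follows essentially the same approach as the paper: use the first part of Proposition~\ref{th:vertices_plus_density} to get $\Phi_{\mu}(V)=V$, exploit continuity in $\mu$ together with discreteness of $V$ and connectedness of $\mathcal{P}$ to propagate $\Phi_{\mu'}|_V=\texttt{id}$ to all $\mu$, then apply the second part of Proposition~\ref{th:vertices_plus_density}. Your write-up is in fact a bit more explicit (the permutation map $\sigma_\mu$, the separation bound $\delta$), and your remark that connectedness alone suffices is accurate.
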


\begin{proof}
Exploiting the first statement of Proposition \ref{th:vertices_plus_density}, we find that $\Phi_{\mu}(V)=V$ for all $\mu\in \mathcal{P}$.
Since the set of vertices $V$ is a discrete closed set, 
  if the function $\Phi: \Omega_{\rm p} \times \mathcal{P} \to \Omega_{\rm p}$ is continuous with respect to the parameter $\mu\in \mathcal{P}$ and
is bijective in $ \Omega_{\rm p}$   for all
$\mu\in \mathcal{P}$, then the hypothesis 
$\Phi_{\mu'}|_V=\texttt{id}$  
 \emph{for some} $\mu'\in \mathcal{P}$ implies 
 $\Phi_{\mu}|_V=\texttt{id}$  
 \emph{for all} $\mu\in \mathcal{P}$. 
 Therefore, exploiting the second statement of Proposition \ref{th:vertices_plus_density}, we conclude that
 $\Phi_{\mu}  =\texttt{id} +\varphi_{\mu}$ with 
$\varphi_{\mu}\in {\mathfrak{U}}_0$ for all
$\mu \in \mathcal{P}$.
\end{proof}

Given the parametric registration problem
 \eqref{eq:tractable_optimization_based_registration} with $\texttt{N} =\texttt{N}_{\rm p}$, we
 denote by 
$\mathbf{a}_{\mu}^{\rm opt}$ a solution to 
 \eqref{eq:tractable_optimization_based_registration}  for any $\mu\in \mathcal{P}$ and we define
 the corresponding manifold   $\mathcal{M}_{\Phi}: = \{ \mathbf{a}_{\mu}^{\rm opt} : \mu\in \mathcal{P} \} \subset \mathbb{R}^M$.  
 If  $\mathcal{M}_{\Phi}$ is reducible --- 
 that is, there exists an orthogonal matrix 
$\mathbf{W}\in \mathbb{R}^{M\times m}$
whose columns approximate the elements of 
$\mathcal{M}_{\Phi}$
for   $m \ll M$ 
 ---
 we can restrict the search space in  \eqref{eq:tractable_optimization_based_registration} to the image of $\mathbf{W}$, 
 ${\rm col}(\mathbf{W})$. 
As discussed in the introduction, since the pair
$(\texttt{N} ( \cdot) = \texttt{N}_{\rm p} (\mathbf{W} \cdot),
\mathfrak{f}_{\rm pen}^{\rm th}(\texttt{N}_{\rm p}( 
\mathbf{W} \cdot  ))$ satisfies \eqref{eq:desiderata_mapping_space} for any full-rank matrix  
$\mathbf{W}$, the choice of the latter  can be made solely based on approximation considerations.

\subsection{Approximation of Lipschitz maps}
\label{sec:lipschitz_maps}
The proof of Proposition \ref{th:bijectivity} 
exploits   the 
Hadamard's global inverse function theorem (cf. \cite[Theorem  6.2.8] {krantz2002implicit});
it hence relies on the assumption that $\Phi$ is of class $C^1$.
Our numerical investigations suggest that a similar result --- possibly with further conditions --- might hold for piecewise-smooth maps such as FE fields.   

On the other hand, 
Proposition \ref{th:vertices_plus_density} does not hold for general Lipschitz maps, as shown in the next example.
First, we introduce  the square domain $\Omega=(0,1)^2$,
and  we define 
the  map $\Phi_1: \Omega=(0,1)^2 \to \mathbb{R}^2$ such that
\begin{equation}
\label{eq:silly_example}
\Phi_1(x) = \left[
\begin{array}{cc}
1/2 & 0 \\ -1/2 & 1 \\
\end{array}
\right] x \mathbbm{1}_{x_1<x_2}(x)
+
\left[
\begin{array}{cc}
1 & -1/2 \\  0 & 1/2 \\
\end{array}
\right] x \mathbbm{1}_{x_1\geq x_2}(x).
\end{equation}
It is easy to verify that
$\Phi_1$ is a Lipschitz map 
from $\Omega$ in the unit triangle $D = \{x\in \Omega : x_1+x_2<1\}$
with Lipschitz inverse
(cf. 
Figure \ref{fig:geometric_interpretation_Phi}(a)).
Then, we introduce a smooth bijection $\Phi_2: D\to D$ that maps $x^{\star} = [1/2,1/2]$ into 
$y^{\star} = [1/4,3/4]$ 
(cf.  Figure \ref{fig:geometric_interpretation_Phi}(b)):
$\Phi_2$ can be constructed using an expansion of  quadratic polynomials; we omit the explicit expression.
Finally, we define the map
$\Phi = \Phi_1^{-1} \circ \Phi_2 \circ \Phi_1$:
clearly, $\Phi$ is a bijection in $\Omega$; furthermore, 
$\Phi([1,1]) = 
\Phi_1^{-1}\left(\Phi_2 ([1/2,1/2]) \right)
= 
\Phi_1^{-1}\left([1/4,3/4] \right)
=[1,1/2].$ We hence found a bijection  $\Phi$ in $\Omega$ such that  $\Phi(V) \neq V$.

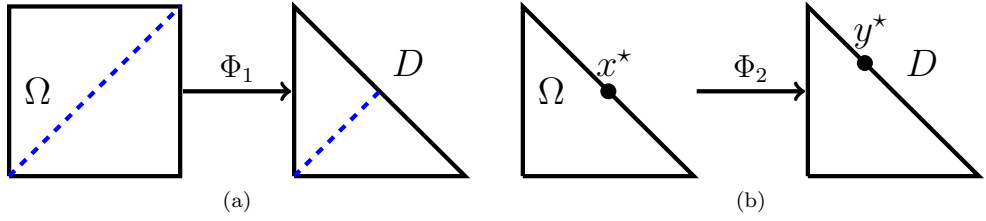
\begin{figure}[h!]
\centering
\subfloat[]{
\begin{tikzpicture}[scale=0.75]
\linethickness{0.3 mm}
\draw[ultra thick]  (0,0)--(3,0)--(3,3)--(0,3)--(0,0);
\draw[ultra thick]  (5,0)--(8,0)--(5,3)--(5,0);

\draw[ultra thick,dashed,blue] (0,0)--(3,3);

\draw[ultra thick,dashed,blue] (5,0)--(6.5,1.5);

\coordinate [label={center:  {\Large {$\Omega$}}}] (E) at (0.5, 1.5) ;

\coordinate [label={center:  {\Large {$D$}}}] (E) at (7, 2) ;

\draw[->,ultra thick]  (3.05,1.5)--(4.95,1.5);

\coordinate [label={above:  {\large {${\Phi_1}$}}}] (E) at (4, 1.5) ;
 
\end{tikzpicture}
}
~~~
\subfloat[]{
\begin{tikzpicture}[scale=0.75]
\linethickness{0.3 mm}
\draw[ultra thick]  (0,0)--(3,0)--(0,3)--(0,0);
\draw[ultra thick]  (5,0)--(8,0)--(5,3)--(5,0);

\coordinate [label={above:  {\Large {$x^{\star}$}}}] (E)   at (1.6, 1.6) ;
\coordinate (E) at (1.5, 1.5) ;
\fill (E) circle[radius=4pt];

\coordinate [label={above:  {\Large {$y^{\star}$}}}] (E)   at (6.1, 2.1) ;
 \coordinate (E) at (6,2) ;
\fill (E) circle[radius=4pt];

\coordinate [label={center:  {\Large {$\Omega$}}}] (E) at (0.5, 1.5) ;

\coordinate [label={center:  {\Large {$D$}}}] (E) at (7, 2) ;

\draw[->,ultra thick]  (3.05,1.5)--(4.95,1.5);

\coordinate [label={above:  {\large {${\Phi_2}$}}}] (E) at (4, 1.5) ;
 
\end{tikzpicture}
}
\caption{example of Lipschitz map
with $\Phi(V)\neq V$.
(a) Lipschitz map 
 from the unit square to the unit triangle (cf \eqref{eq:silly_example}).
 (b) smooth map in the unit triangle $D$.
}
\label{fig:geometric_interpretation_Phi}
\end{figure}

\section{Nonlinear  maps for general domains}
\label{sec:nonlinear_maps} 

We address the extension to non-polytopal domains. First, we   justify the need for nonlinear ansätze;
then, we investigate the approximation properties of the nonlinear ansätze \eqref{eq:nonlinear_ansatz} and \eqref{eq:nonlinear_ansatz_generalized}.

\subsection{Inadequacy of affine maps for domains with curved boundaries}
For many problems of interest, including the one considered in the numerical results of section \ref{sec:numerics}, it is of paramount importance to deform points that lie on curved edges or faces; we hence seek an operator $\texttt{N}$ and a penalty $\mathfrak{f}_{\rm pen}$ that satisfy 
\eqref{eq:desiderata_mapping_space} and enable non-trivial deformations of points on curved boundaries.
Next result, 
which generalizes \cite[Lemma 2.1]{taddei2021registration}, 
 shows that non-trivial affine mappings for curved domains lead to admissible sets $A$ with empty interior: therefore, 
affine maps 
in combination with the penalty \eqref{eq:f_pen_theory} do not satisfy   \eqref{eq:desiderata_mapping_space}.
 This observation shows the inadequacy of affine maps for domains with curved boundaries and ultimately justifies the need for a nonlinear ansatz.

\begin{lemma}
\label{th:inadequacy_linear_maps}
Let $\Omega\subset \mathbb{R}^2$ be a domain  with curved boundary $\Gamma \subset \partial \Omega$; let $\texttt{N}:\mathbb{R}^M \to {\rm Lip}(\overline{\Omega}; \mathbb{R}^2)$ be an affine operator of the form
$\texttt{N}(\mathbf{a}) = \texttt{id} + \sum_{i=1}^M(\mathbf{a})_i \varphi_i$ and let 
$A\subset \mathbb{R}^M$ be the set of admissible (i.e., bijective in $\Omega$) maps. Suppose that there exists $i\in \{1,\ldots,M\}$ and $x\in \Gamma$ such that $\varphi_i(x)\neq 0$.
Then, $0\in \mathbb{R}^M$ does not belong to the interior of $A$. Furthermore, if $\Gamma  =  \partial \Omega$, then $A$ has empty interior.
\end{lemma}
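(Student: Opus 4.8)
The plan is to exploit the incompatibility between the affine (hence globally linear on each coordinate) nature of the perturbations $\varphi_i$ and the \emph{curvature} of the boundary $\Gamma$. The key geometric fact is that if $\Phi=\texttt{id}+\varphi$ is to be a bijection of $\Omega$ onto itself, then $\Phi$ must map $\partial\Omega$ onto $\partial\Omega$; in particular $\Phi$ must map the curved arc $\Gamma$ into $\partial\Omega$. The point of the curvature hypothesis is that a nonzero affine displacement cannot send a curved arc back into the boundary unless it degenerates.

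First I would set up the contradiction: suppose $0$ lies in the interior of $A$, so that for all sufficiently small $t$, the map $\Phi_t=\texttt{id}+t\varphi_i$ is a bijection of $\Omega$ onto itself (I isolate the single index $i$ with $\varphi_i(x)\neq 0$ at some $x\in\Gamma$; the other coefficients are set to zero). For each such $t$, bijectivity forces $\Phi_t(\partial\Omega)=\partial\Omega$, hence $\Phi_t(\Gamma)\subset\partial\Omega$. Now I would parametrize a neighborhood of $x$ in $\Gamma$ by a smooth curve $\gamma(s)$ with $\gamma(0)=x$, and examine the curve $s\mapsto \gamma(s)+t\,\varphi_i(\gamma(s))$. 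The heart of the argument is to show that, because $\varphi_i$ is affine and $\Gamma$ is genuinely curved (nonzero curvature somewhere, by the characterization of curved boundaries as those not expressible as finite unions of segments), this perturbed curve cannot remain inside $\partial\Omega$ for a whole interval of values of $t$ unless $\varphi_i$ vanishes on a relatively open piece of $\Gamma$ — and an affine vector field vanishing on an arc with nonzero curvature must vanish identically on the affine hull, contradicting $\varphi_i(x)\neq 0$. I expect the cleanest route is to argue locally: near a point of $\Gamma$ of nonzero curvature, $\partial\Omega$ is (after rotation) the graph of a strictly convex/concave $C^2$ function $h$, while the image $\Phi_t(\Gamma)$ near $\Phi_t(x)$ must also be contained in the graph of some boundary function; matching second-order data (curvature transforms under the linear map $I+t\nabla\varphi_i$ via Nanson/standard change-of-variables formulas) yields an algebraic constraint on $t$ that can hold for at most finitely many $t$, contradicting an open interval's worth.

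For the second statement, with $\Gamma=\partial\Omega$: if the whole boundary is curved, the same argument applies but now there is no "straight part" of $\partial\Omega$ to absorb the displacement, so the obstruction is present for \emph{every} choice of coefficient vector $\mathbf{a}$ for which some $\varphi_i$ acts nontrivially on the boundary — hence no interior point of $A$ can exist at all. More precisely, I would take an arbitrary $\mathbf{a}^\star\in A$ and show it is not interior: consider $\varphi^\star=\sum_i(\mathbf{a}^\star)_i\varphi_i$ and the family $\texttt{id}+\varphi^\star+t\psi$ for a suitable affine $\psi$ and small $t$; repeating the curvature-matching obstruction on $\partial\Omega$ shows bijectivity fails for a dense set of $t$ near $0$, so $\mathbf{a}^\star$ is not in the interior. (If all $\varphi_i$ happen to vanish on $\partial\Omega$, then $\texttt{N}$ produces only trivial boundary deformations and the hypothesis "$\varphi_i(x)\neq0$ for some $x\in\Gamma$" is violated, so this degenerate case is excluded.)

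The main obstacle I anticipate is making the curvature-matching step rigorous and coordinate-free: I need a clean statement that an affine perturbation of a $C^2$ arc with nonvanishing curvature cannot stay on the boundary of $\Omega$ for an interval of perturbation parameters. The safest implementation is probably to pass to the signed curvature $\kappa$ of the image curve as a function of $t$, show it is a nonconstant rational (or at least real-analytic, nonconstant) function of $t$ for fixed arclength position — using that $\nabla\varphi_i$ is a constant matrix so the pushforward of the tangent/normal frame depends polynomially on $t$ — and then invoke that $\partial\Omega$ has a \emph{prescribed} curvature at each point, so equality along an interval of $t$ would force constancy, hence a contradiction with genuine curvature. A secondary technical point is handling the corners/vertices of $\partial\Omega$ (where $\partial\Omega$ may fail to be $C^2$): since curvature is nonzero on some relatively open arc disjoint from the finitely many vertices, I can always localize the argument away from them, so this causes no real difficulty.
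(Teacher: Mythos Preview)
There is a genuine misreading at the heart of your plan. When the lemma calls $\texttt{N}$ an ``affine operator'', this means $\texttt{N}:\mathbb{R}^M\to{\rm Lip}(\overline{\Omega};\mathbb{R}^2)$ is affine \emph{in the coefficients} $\mathbf{a}$; the basis fields $\varphi_i$ are arbitrary Lipschitz maps (in the paper they are high-order finite-element fields). Your argument repeatedly treats $\varphi_i$ as an affine function of the spatial variable $x$: you write that $\nabla\varphi_i$ is a constant matrix, that an ``affine vector field vanishing on an arc with nonzero curvature must vanish identically on the affine hull'', and you build the curvature-matching step on the pushforward by $I+t\nabla\varphi_i$. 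None of this is available in the stated setting, so the curvature computation as you describe it does not go through.

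The fix is both simpler and more robust than your route, and it is exactly what the paper does. Do not parametrize an arc of $\Gamma$; instead fix the single point $x\in\Gamma$ with $\varphi_i(x)\neq 0$ and vary $t$. If $0$ were interior to $A$, then $\Phi_t=\texttt{id}+t\varphi_i$ would be a bijection of $\Omega$ for all $t\in(-\delta,\delta)$, hence $\Phi_t(\partial\Omega)=\partial\Omega$, hence the set $\{x+t\,\varphi_i(x):t\in(-\delta,\delta)\}$ lies in $\partial\Omega$. But $\varphi_i(x)$ is a fixed nonzero vector in $\mathbb{R}^2$, so this set is a genuine straight segment through $x$, forcing $\partial\Omega$ to be flat near $x\in\Gamma$ --- contradiction. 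No regularity of $\varphi_i$ beyond continuity, and no curvature formula, is needed. For the second statement, the same one-line argument works centered at any $\mathbf{a}^\star\in A$: the segment $\{\texttt{N}(\mathbf{a}^\star)(x)+t\,\varphi_i(x):t\in(-\delta,\delta)\}$ must lie in $\partial\Omega$, and since $\Gamma=\partial\Omega$ the base point $\texttt{N}(\mathbf{a}^\star)(x)$ again sits on a curved piece of the boundary.
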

\begin{proof}
By contradiction, there exists $r>0$ such that $\mathcal{B}_r(0) \subset A$; therefore, there exists $\delta>0$ such that $\Phi_t = \texttt{id} + t \varphi_i$ is a bijection in $\Omega$ for all $t\in (-\delta,\delta)$. 
Since  $\Phi_t$ is bijective in $\Omega$, we must have $\Phi_t(\partial \Omega) = \partial \Omega$, which implies that the linear segment $S_{\delta} = \{ x + t  \varphi_i(x) : t\in(-\delta,\delta) \}$ is  contained in $\partial \Omega$. 
We conclude that $\partial \Omega$ is flat in the neighborhood of $x\in \Gamma$. Contradiction.

We remark that the argument of the proof  solely relies on  the fact that
 $\Phi_{t=0}(x)\in \Gamma$: if 
$\partial \Omega$ is entirely curved (that is, if $\Gamma  =  \partial \Omega$) we can replicate the same argument   for  any $\mathbf{a}\in A$  to prove that 
$A$ has empty interior.
\end{proof}

\subsection{Approximation properties of compositional maps}
\label{sec:density_composition}
Given the domain $\Omega$, we introduce the polytope $\Omega_{\rm p}$ and the isomorphism $\Psi$ from $\Omega_{\rm p}$ to $\Omega$;
we denote by 
$V=\{x_i^{\rm v} \}_{i=1}^{N_{\rm v}}$ the vertices of $\Omega_{\rm p}$ and we denote by
$\{F_j^{\rm p} \}_{j=1}^{N_{\rm f,p}}$ the facets of $\Omega_{\rm p}$; we also define the mapped facets
 $\{F_j = \Psi(F_j^{\rm p}) \}_{j=1}^{N_{\rm f,p}}$.
In the remainder, we assume that the pair $(\Omega_{\rm p}, \Psi)$ satisfies the following assumption;
the set  $V_{\rm ang}$  
of angular points of $\Omega$ 
is the subset of $\partial \Omega$ where 
the normal $\mathbf{n}$ is discontinuous.
 
\begin{hypothesis}
\label{hyp:regular_polytopes}
The pair $(\Omega_{\rm p}, \Psi)$ satisfies the following conditions:
(i)
the vertices $V$ of $\Omega_{\rm p}$ belong to $\partial \Omega$;
(ii)
the set  $V_{\rm ang}$ of angular points of $\Omega$  is contained in $V$;
(iii)
 the linear facets of $\partial \Omega$ belong to $\partial  \Omega_{\rm p}$;
(iv)
the field $\Psi$ is a Lipschitz bijection from $\Omega_{\rm p}$ to $\Omega$ that satisfies
$\Psi|_V= \texttt{id}$ and  $J(\Psi)>0$ a.e..
 \end{hypothesis}

By construction,  any bijection   $\Phi$ of the form \eqref{eq:nonlinear_ansatz}
--- that is, $\Phi = \Psi \circ \Phi_{\rm p} \circ \Psi^{-1}$ with  
 $\Phi_{\rm p} \in \texttt{id} + {\mathfrak{U}}_0(\Omega_{\rm p})$---
  satisfies 
 $\Phi|_V= \texttt{id}$ and
 $\Phi(F_j) = F_j $ for $j=1,\ldots,N_{\rm f}$. Conversely, we have the following result, which signifies that nonlinear maps of the form \eqref{eq:nonlinear_ansatz} are dense in the space of diffeomorphisms that satisfy 
 $\Phi|_V= \texttt{id}$.

\begin{lemma}
\label{th:density_composition}
Let 
 $(\Omega_{\rm p},\Psi)$ satisfy 
 Hypothesis \ref{hyp:regular_polytopes}.
Let $\Phi$ be a diffeomorphism  in $\Omega$   such that
$\Phi|_V= \texttt{id}$. 
Then, $\widetilde{\Phi} = \Psi^{-1} \circ \Phi \circ \Psi = \texttt{id} + \varphi$ for some   $\varphi \in {\rm Lip}(\Omega_{\rm p}; \mathbb{R}^2)$ such that $\varphi \cdot \mathbf{n}|_{\partial \Omega_{\rm p}} = 0$.
\end{lemma}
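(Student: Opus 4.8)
The plan is to show that $\widetilde\Phi=\Psi^{-1}\circ\Phi\circ\Psi$ is a Lipschitz bijection of $\Omega_{\rm p}$ onto itself whose displacement $\varphi=\widetilde\Phi-\texttt{id}$ has vanishing normal component on $\partial\Omega_{\rm p}$. Since $\Psi$ is a Lipschitz bijection $\Omega_{\rm p}\to\Omega$ with Lipschitz inverse (Hypothesis \ref{hyp:regular_polytopes}(iv)) and $\Phi$ is a diffeomorphism of $\Omega$, the composition $\widetilde\Phi$ is immediately a Lipschitz bijection of $\Omega_{\rm p}$, so $\varphi\in{\rm Lip}(\Omega_{\rm p};\mathbb{R}^2)$. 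The content of the lemma is the boundary condition $\varphi\cdot\mathbf{n}|_{\partial\Omega_{\rm p}}=0$, which, because $\partial\Omega_{\rm p}$ is a union of straight edges, is equivalent to showing that $\widetilde\Phi$ maps each edge $F_j^{\rm p}$ into itself (the same reduction used at the end of the proof of Proposition \ref{th:vertices_plus_density}).

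First I would establish the two preliminary facts. Since $\Psi|_V=\texttt{id}$ and $\Phi|_V=\texttt{id}$, we get $\widetilde\Phi|_V=\texttt{id}$, i.e. every vertex of $\Omega_{\rm p}$ is fixed by $\widetilde\Phi$. Next, I would show $\widetilde\Phi(\partial\Omega_{\rm p})=\partial\Omega_{\rm p}$: because $\widetilde\Phi$ is a homeomorphism of $\Omega_{\rm p}$ it maps boundary to boundary, and I can say more using the structure of $\partial\Omega_{\rm p}$. Write $\partial\Omega_{\rm p}=E_{\rm lin}\cup E_{\rm curv}^{\rm p}$, where $E_{\rm lin}$ is the union of the linear facets of $\partial\Omega$ (which lie in $\partial\Omega_{\rm p}$ by Hypothesis \ref{hyp:regular_polytopes}(iii)) and $E_{\rm curv}^{\rm p}=\Psi^{-1}(\Gamma_{\rm curv})$ is the preimage of the curved part $\Gamma_{\rm curv}=\partial\Omega\setminus E_{\rm lin}$. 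On $E_{\rm lin}$, $\Psi=\texttt{id}$, hence $\widetilde\Phi=\Psi^{-1}\circ\Phi$ there, and since $\Phi$ is a diffeomorphism of $\Omega$ fixing the angular points $V_{\rm ang}\subset V$, it maps each linear facet $F$ of $\partial\Omega$ onto itself (here I reuse the edge-mapping argument from the proof of Proposition \ref{th:vertices_plus_density}: $\Phi(F)$ is a connected piece of $\partial\Omega$ with the same endpoints, and $\mathbf{n}_\Phi$ is continuous along $\Phi(F)$, so $\Phi(F)$ cannot straddle another angular point and must equal $F$). Thus $\widetilde\Phi$ fixes each linear edge setwise.

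The remaining step is to handle the edges of $\Omega_{\rm p}$ that belong to $E_{\rm curv}^{\rm p}$. The key observation is that the endpoints of such an edge are vertices in $V$, hence fixed by $\widetilde\Phi$, and $\widetilde\Phi$ permutes the edges of $\Omega_{\rm p}$ (since it maps $\partial\Omega_{\rm p}$ to itself homeomorphically and carries vertices to vertices — a vertex being a point where $\partial\Omega_{\rm p}$ locally splits into two edges, a property preserved by the homeomorphism $\widetilde\Phi$). An edge with both endpoints fixed, inside a permutation of edges that also fixes all vertices, must be mapped to an edge with the same pair of endpoints; since $\Omega_{\rm p}$ is a polytope with at least three vertices per polygonal chain, no two distinct edges share both endpoints (on $E_{\rm lin}$ this is clear, and for a curved-originating edge the matching linear-edge argument on the $\Omega$ side, transported by $\Psi$, rules out the swap), so $\widetilde\Phi(F_j^{\rm p})=F_j^{\rm p}$ for every $j$. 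Once each edge is mapped into itself, writing a straight parameterization $\gamma_{\rm f}$ of $F_j^{\rm p}$ along its tangent $\mathbf{t}_{\rm f}$, the image $\widetilde\Phi(\gamma_{\rm f}(s))$ stays on the line through $x_1^{\rm v}$ with direction $\mathbf{t}_{\rm f}$, so $\mathbf{n}_{\rm f}\cdot\varphi=\mathbf{n}_{\rm f}\cdot(\widetilde\Phi-\texttt{id})=0$ on that edge — exactly as in the final display of the proof of Proposition \ref{th:vertices_plus_density}. Taking the union over all edges yields $\varphi\cdot\mathbf{n}|_{\partial\Omega_{\rm p}}=0$.

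The main obstacle is the bookkeeping in the third paragraph: ruling out that $\widetilde\Phi$ swaps two distinct edges sharing a common fixed vertex. For linear edges this follows directly from the Nanson/normal-continuity argument already in the paper, but for edges of $\Omega_{\rm p}$ that originate from the curved portion of $\partial\Omega$ one must be careful, because the normal $\mathbf{n}$ on $\partial\Omega_{\rm p}$ need not correspond to the normal $\mathbf{n}_\Omega$ on $\partial\Omega$ under $\Psi$ (a straight edge of $\Omega_{\rm p}$ can map to a curved arc of $\partial\Omega$). The remedy is to transport the question to $\Omega$: $\Phi$ permutes the connected components of $\partial\Omega\setminus V_{\rm ang}$ while fixing the endpoints of each (the points of $\Psi(V)\subset\partial\Omega$), and since $\Phi$ is a diffeomorphism each such arc is mapped onto itself by the connectedness-plus-fixed-endpoints argument; pulling back by $\Psi$ transfers this to the edges of $\Omega_{\rm p}$. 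This step, together with verifying that $\widetilde\Phi$ genuinely carries vertices to vertices (needed so that "permutes edges" makes sense), is the part that requires care; everything else is a direct transcription of the reasoning in Proposition \ref{th:vertices_plus_density}.
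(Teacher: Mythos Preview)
Your proposal is correct and follows essentially the same approach as the paper: reduce the boundary condition to showing $\widetilde{\Phi}(F_j^{\rm p})=F_j^{\rm p}$ for every facet, then invoke the straight-edge parameterization argument from the end of Proposition~\ref{th:vertices_plus_density}.

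The one difference worth pointing out is that the paper avoids your linear/curved split entirely by working on $\Omega$ from the outset: since $\Phi$ is a diffeomorphism of $\Omega$ with $\Phi|_V=\texttt{id}$, it preserves the orientation of each boundary component and therefore maps every arc $F_j=\Psi(F_j^{\rm p})$ (whether linear or curved) onto itself; pulling back by $\Psi$ immediately gives $\widetilde{\Phi}(F_j^{\rm p})=F_j^{\rm p}$ for all $j$. This is exactly the ``transport to $\Omega$'' remedy you describe in your final paragraph, applied uniformly rather than only to the curved edges. Doing it this way also sidesteps your intermediate claim that $\Psi=\texttt{id}$ on $E_{\rm lin}$, which is true for the concrete construction in section~\ref{sec:nonlinear_ansatz} but is not literally part of Hypothesis~\ref{hyp:regular_polytopes}.
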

\begin{proof}
Since $\Phi$ is a diffeomorphism, we must have $\Phi(\partial \Omega) = \partial \Omega$; furthermore, 
the condition $\Phi|_V= \texttt{id}$ implies that $\Phi$ preserves the orientation of the boundary. In conclusion, we find
${\Phi}(F_j) = F_j$ for $j=1,\ldots,N_{\rm f}$.
Therefore, we have that  
$\widetilde{\Phi} = \Psi^{-1} \circ \Phi \circ \Psi$ is a bijection in $\Omega_{\rm p}$ that satisfies 
$\widetilde{\Phi}(F_j^{\rm p}) = F_j^{\rm p}$ for $j=1,\ldots,N_{\rm f}$.
Then, exploiting the same argument of the proof of Proposition \ref{th:vertices_plus_density}, we find that 
$\widetilde{\Phi} = \texttt{id} + \varphi$ with 
$\varphi \cdot \mathbf{n}|_{\partial \Omega_{\rm p}} = 0$. We omit the details.
\end{proof}

\subsection{Multi-layer compositional maps}
\label{sec:PTC_link}
Lemma \ref{th:density_composition} shows that single-layer compositional maps  cannot approximate arbitrary diffeomorphisms in curved domains.
To address this issue, we might consider more general multi-layer maps of the form \eqref{eq:nonlinear_ansatz_generalized}. We here study the approximation properties of 
\eqref{eq:nonlinear_ansatz_generalized} for  $\ell=2$ layers
$$
\texttt{N}(\mathbf{a}_1,\mathbf{a}_2) = 
\texttt{N}_1(\mathbf{a}_1) \circ \texttt{N}_2(\mathbf{a}_2),
\quad
\texttt{N}_i(\mathbf{a}_i) 
=
\Psi_i \circ \texttt{N}_{{\rm p},i}(\mathbf{a}_i) \circ \Psi_i^{-1},
\; \; i=1,2.
$$
The extension of the analysis to arbitrary number of layers is the subject of ongoing investigations.

We denote by $\Omega_{{\rm p}, 1}$ and $\Omega_{{\rm p}, 2}$ the polytopes associated to the bijections $\Psi_1,\Psi_2$;
we assume that 
$(\Omega_{{\rm p}, 1}, \Psi_1)$ and 
$(\Omega_{{\rm p}, 2}, \Psi_2)$ satisfy
Hypothesis \ref{hyp:regular_polytopes}.
We  also denote by $V_i^{\rm fict} = \{  x_{j,i}^{\rm v}  \}_{j=1}^{N_{{\rm v},i}}$ the \emph{fictitious}  
vertices of $\Omega_{{\rm p}, i}$, 
that is, the vertices of $\Omega_{{\rm p}, i}$ that do not correspond to
angular points of $\partial \Omega$.
Finally, given $x,y\in\partial \Omega$, we define the set of curves
\begin{subequations}
\label{eq:geodesics_distance}
\begin{equation}
\label{eq:geodesics_distance_a}
A_{\partial \Omega}(x,y) = \left\{
\gamma\in C^1([0,1]; \partial \Omega) :
\gamma(0)=x, 
\; 
\gamma(1)=y,
\; 
\|  \dot{\gamma} \|_2 \equiv  {\rm const} \;
\right\}
\end{equation}
and the geodesic distance
\begin{equation}
\label{eq:geodesics_distance_b}
{\rm dist}_{\partial \Omega}(x,y)
=
\left\{
\begin{array}{ll}
\displaystyle{
\inf \left\{
\int_0^1 \|  \dot{\gamma} \|_2 \, dt :
\; \;
\gamma \in A_{\partial \Omega}(x,y)
\right\}
}
&
{\rm if} \; A_{\partial \Omega}(x,y) \neq \emptyset
\\[3mm]
+\infty
&
{\rm if} \; A_{\partial \Omega}(x,y)  =  \emptyset
\\
\end{array}
\right.
\end{equation}
\end{subequations}
Note that $C^1$ curves cannot pass through a non-smooth point of the boundary (angular point): therefore, if $x,y\in \partial \Omega$ are separated by angular points (e.g., points on two separate faces of a polytope), we have  
$ A_{\partial \Omega}(x,y)  =  \emptyset$ and ${\rm dist}_{\partial \Omega}(x,y)=+\infty$.

Next Lemma shows that two-layer maps can approximate  
 arbitrary diffeomorphisms under the assumption of small deformations.
Note that boundary deformations should be smaller than 
  (i) the minimum distance between the fictitious vertices of 
$\Omega_{{\rm p}, 1}$ and
$\Omega_{{\rm p}, 2}$
---
 $\min_{x\in V_1^{\rm fict} , y\in V_2^{\rm fict}} 
{\rm dist}_{\partial \Omega}(x ,y)$;
(ii) the minimum distance between two vertices of 
 $\Omega_{{\rm p}, 1}$ and  $\Omega_{{\rm p}, 2}$
 ---
$\min_{x,y\in V_i, x\neq y  } 
{\rm dist}_{\partial \Omega}(x ,y)$ with 
$i=1,2$.

\begin{lemma}
\label{th:density_multilayer_composition}
Let 
 $(\Omega_{{\rm p},1},\Psi_1)$
 and
  $(\Omega_{{\rm p},2},\Psi_2)$ satisfy 
  Hypothesis \ref{hyp:regular_polytopes}.
 Let $\Phi$ be a diffeomorphism in $\Omega$   with positive Jacobian determinant  such that
$\max_{x\in \partial \Omega} {\rm dist}_{\partial \Omega} $
$(x,\Phi(x))$ $< C$ where $C=C(\Omega_{{\rm p}, 1},\Omega_{{\rm p}, 2}   )$
is given by  
\begin{equation}
\label{eq:constantC}
\begin{array}{rl}
C:= \min  \Big\{
&
\displaystyle{
\min_{x\in V_1^{\rm fict} , y\in V_2^{\rm fict}} 
{\rm dist}_{\partial \Omega}(x ,y),
\;\;
\min_{x,y\in V_1, x\neq y  } 
{\rm dist}_{\partial \Omega}(x ,y),
} \\[3mm]
&
\displaystyle{
\min_{x,y\in V_2, x\neq y  } 
{\rm dist}_{\partial \Omega}(x ,y)\Big\}.}
\\
\end{array}
\end{equation}
where $V_1^{\rm fict}, V_2^{\rm fict}$ are the vertices of $\Omega_{{\rm p},1},\Omega_{{\rm p},2}$ that do not correspond to angular points of $\partial \Omega$.
Then, there exist $\Phi_1 = \texttt{id}+\varphi_1$ and 
$\Phi_2 = \texttt{id}+\varphi_2$  such that
$\Phi = 
\Psi_1 \circ \Phi_1 \circ \Psi_1^{-1}
\circ
\Psi_2 \circ \Phi_2 \circ \Psi_2^{-1}
$ and 
$\varphi_i\cdot \mathbf{n}|_{\partial \Omega_{{\rm p},i}}=0$ for $i=1,2$.
\end{lemma}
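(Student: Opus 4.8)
The strategy is to reduce the two-layer claim to two successive applications of the single-layer result (Lemma \ref{th:density_composition}), by using the second layer $\Psi_2 \circ \Phi_2 \circ \Psi_2^{-1}$ to ``pre-correct'' the boundary behaviour of $\Phi$ so that the residual map fixes the vertices of $\Omega_{{\rm p},1}$ and can therefore be handled by the first layer. Concretely, I would first analyze how $\Phi$ acts on $\partial\Omega$: since $\Phi$ is a diffeomorphism with positive Jacobian, $\Phi(\partial\Omega)=\partial\Omega$ and $\Phi$ preserves the orientation of each smooth arc of $\partial\Omega$; moreover, by Hypothesis \ref{hyp:regular_polytopes}(ii) the angular points $V_{\rm ang}$ of $\partial\Omega$ are fixed by $\Phi$ (they form a discrete invariant set, exactly as in the proof of Proposition \ref{th:vertices_plus_density}). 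The smallness hypothesis $\max_{x\in\partial\Omega}{\rm dist}_{\partial\Omega}(x,\Phi(x))<C$ then guarantees that $\Phi$ does not move any vertex of $\Omega_{{\rm p},1}$ or $\Omega_{{\rm p},2}$ across an angular point or past another vertex, so on each smooth arc $\Phi$ is a genuine orientation-preserving reparameterization moving points by strictly less than the relevant vertex spacings.

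Next I would construct $\Phi_2$. The goal is to choose $\Phi_2 = \texttt{id}+\varphi_2$ with $\varphi_2\cdot\mathbf{n}|_{\partial\Omega_{{\rm p},2}}=0$ so that the composite $\Psi_2\circ\Phi_2\circ\Psi_2^{-1}$, restricted to the boundary, carries each point $\Psi_1(v)$ (for $v$ a vertex of $\Omega_{{\rm p},1}$) to $\Phi^{-1}(\Psi_1(v))$ — equivalently, so that $\Phi\circ\Psi_2\circ\Phi_2\circ\Psi_2^{-1}$ fixes the image under $\Psi_1$ of every vertex of $\Omega_{{\rm p},1}$. Pulling back through $\Psi_2$, this asks for a bijection $\Phi_2$ of $\Omega_{{\rm p},2}$ fixing the vertices $V_2$ (so that $\varphi_2$ has vanishing normal component, by Proposition \ref{th:vertices_plus_density}) and prescribing the images of finitely many interior-of-face points. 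Because the prescribed displacements are along $\partial\Omega$ (tangential after pullback, since $\Psi_2$ maps facets to facets by Hypothesis \ref{hyp:regular_polytopes}(iii)) and, by the constant $C$ in \eqref{eq:constantC}, the target points never collide and never leave the open facet they start on, such a boundary-displacement field can be prescribed on each edge of $\Omega_{{\rm p},2}$ and extended into the interior as a small $C^1$ field with positive Jacobian; by Proposition \ref{th:bijectivity} the resulting $\Phi_2$ is a bijection, and the normal-component condition holds. I would invoke standard extension/mollification arguments here rather than write an explicit formula.

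Finally, set $\Phi^{\rm res} := \Phi\circ\Psi_2\circ\Phi_2\circ\Psi_2^{-1}$. By construction $\Phi^{\rm res}$ is a diffeomorphism of $\Omega$ with $\Phi^{\rm res}|_{\Psi_1(V)} = \texttt{id}$, i.e. $\Phi^{\rm res}$ fixes the images of all vertices of $\Omega_{{\rm p},1}$; hence it satisfies the hypothesis of Lemma \ref{th:density_composition} applied with the pair $(\Omega_{{\rm p},1},\Psi_1)$. That lemma yields $\Psi_1^{-1}\circ\Phi^{\rm res}\circ\Psi_1 = \texttt{id}+\varphi_1$ with $\varphi_1\cdot\mathbf{n}|_{\partial\Omega_{{\rm p},1}}=0$; call this $\Phi_1$. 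Unwinding, $\Phi^{\rm res} = \Psi_1\circ\Phi_1\circ\Psi_1^{-1}$, so
\[
\Phi = \Phi^{\rm res}\circ\big(\Psi_2\circ\Phi_2\circ\Psi_2^{-1}\big)^{-1} = \Psi_1\circ\Phi_1\circ\Psi_1^{-1}\circ\Psi_2\circ\Phi_2^{-1}\circ\Psi_2^{-1},
\]
and one checks that $\Phi_2^{-1}=\texttt{id}+\tilde\varphi_2$ still has vanishing normal component on $\partial\Omega_{{\rm p},2}$ (invert and relabel, or equivalently build $\Phi_2$ in inverse form from the start), giving the claimed factorization.

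The main obstacle is the construction of $\Phi_2$: one must show that the finitely many prescribed tangential boundary displacements at the points $\Psi_2^{-1}(\Psi_1(v))$ can be realized by a single globally bijective $\Phi_2\in\texttt{id}+{\mathfrak{U}}_0(\Omega_{{\rm p},2})$ with positive Jacobian. This is exactly where the quantitative bound $C$ from \eqref{eq:constantC} is used — it is what prevents the prescribed points from merging, from crossing a vertex, or from forcing a fold — and making the extension-into-the-interior step rigorous while keeping $J>0$ (so Proposition \ref{th:bijectivity} applies) is the delicate part; the rest is bookkeeping with orientations and with Hypothesis \ref{hyp:regular_polytopes}.
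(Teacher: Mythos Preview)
Your proposal is correct and follows essentially the same strategy as the paper's proof: construct one layer explicitly so that the residual map fixes the vertex set of the other polytope, then apply Lemma~\ref{th:density_composition} to each factor. The only differences are cosmetic. The paper builds the \emph{first} layer $\widetilde\Phi_1$ directly in $\Omega$ (requiring $\widetilde\Phi_1|_{V_1}=\texttt{id}$ and $\widetilde\Phi_1|_{V_2}=\Phi|_{V_2}$), so that $\widetilde\Phi_2:=\widetilde\Phi_1^{-1}\circ\Phi$ automatically fixes $V_2$; you instead build the \emph{second} layer first, pulled back to $\Omega_{{\rm p},2}$, and then have to invert/relabel $\Phi_2$ at the end. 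Both routes use the constant $C$ in \eqref{eq:constantC} in the same way --- to guarantee that the prescribed boundary displacements stay within a single facet and preserve order --- and both leave the existence of the interpolating diffeomorphism at the same informal level (the paper simply asserts ``there exists a diffeomorphism $\widetilde\Phi_1$ such that\ldots''). Your identification of this extension step as the genuine analytical content is accurate.
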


\begin{proof}
We assume that $\Omega$
has only one curved boundary $\Gamma\subset \partial \Omega$ with parameterization $\boldsymbol{\gamma}:[0,1]\to \Gamma$. We define 
$N_{\rm v}= N_{{\rm v},1}+N_{{\rm v},2}$,
$\{y_{\ell} \}_{\ell=1}^{N_{\rm v}} \subset \Gamma$ and 
$\{t_{\ell} \}_{\ell=1}^{N_{\rm v}} \subset (0,1)$ such that
$$
\{ y_{\ell}  =\boldsymbol{\gamma}(t_\ell)   \}_{\ell=1}^{N_{\rm v}} 
=
V_1^{\rm fict} \cup 
V_2^{\rm fict} ,
\quad
{\rm with} \;\;
0< t_1 < \ldots < t_{N_{\rm v}}< 1.
$$
Definition \eqref{eq:constantC} implies that 
${\rm dist}_{\partial \Omega} (y_{\ell},y_{{\ell+1}}) \geq C$, while the hypothesis
$\max_{x\in \partial \Omega}$ 
$ {\rm dist}_{\partial \Omega} $
$(x,\Phi(x))$ $< C$ implies that
\begin{equation}
\label{eq:messy_proof_multilayer}
\Phi(y_{\ell}) \in \left\{
\boldsymbol{\gamma}(t) : 
t\in (t_{\ell-1}, t_{\ell+1}) 
\right\},
\quad
{\rm for} \;
\ell=1,\ldots, N_{\rm v},
\end{equation}
with $t_{\ell=0}=0$, 
$t_{\ell=N_{\rm v}+1}=1$.

Exploiting \eqref{eq:messy_proof_multilayer} and the fact that $\Phi$ does not deform angular points of $\partial \Omega$ (cf. Proposition \ref{th:vertices_plus_density}), we find that there exists a diffeomorphism $\widetilde{\Phi}_1$  such that
$\widetilde{\Phi}_1(y)=y$ for all $y\in V_1$ and
$\widetilde{\Phi}_1(y)=\Phi(y)$ for all $y\in V_2$. Exploiting Lemma \ref{th:density_composition}, we find that 
$\widetilde{\Phi}_1 = 
\Psi_1 \circ \Phi_1 \circ \Psi_1^{-1}$ for some $\Phi_1\in  \texttt{id} + {\mathfrak{U}}_0(\Omega_{{\rm p}, 1})$.
Similarly, since 
$\widetilde{\Phi}_1^{-1}\circ \Phi (y) = y$ for all $y\in V_2$, we find that
$\widetilde{\Phi}_1^{-1}\circ \Phi
=:\widetilde{\Phi}_2 = \Psi_2 \circ \Phi_2 \circ \Psi_2^{-1}$ for some 
$\Phi_2\in  \texttt{id} + {\mathfrak{U}}_0(\Omega_{{\rm p}, 2})$. In conclusion, we find that 
$\Phi=\widetilde{\Phi}_1 \circ \widetilde{\Phi}_2$ which is the desired result.
\end{proof}

The extension to multiple layers enables the approximation of  diffeomorphisms that involve  larger deformations over curved edges. 
  Figure \ref{fig:multilayer_map} illustrates the approximation power of multi-layer maps for deformations over an airfoil:
Figure \ref{fig:multilayer_map}(a)  shows the  two polytopes
$\Omega_{\rm p,1}, \Omega_{\rm p,2}$
 that are used to define the nonlinear ansatz   and two points $x,y$ on the profile such that $\Phi(x)=y$;
Figures \ref{fig:multilayer_map}(b)-(c)-(d)  show the  action of  the maps
$\texttt{N}_3$, $\texttt{N}_2$ and 
$\texttt{N}_1$, respectively.
In more detail, the map $\texttt{N}_3$
--- which is associated to $\Omega_{\rm p,1}$ ---
 maps the point $x$  into the point $x_1$ (cf. Figure  \ref{fig:multilayer_map}(b));
the map $\texttt{N}_2$
--- which is associated to $\Omega_{\rm p,2}$ --- deforms the point $x_1$  into the point $x_2$ (cf. Figure  \ref{fig:multilayer_map}(c));
the map $\texttt{N}_1$
--- which is associated to $\Omega_{\rm p,1}$ ---
 deforms the point $x_2$   into  the point $y$ (cf. Figure  \ref{fig:multilayer_map}(d)).
In conclusion, the considered map satisfies
$\texttt{N}_1(\texttt{N}_2(\texttt{N}_3(x ))) = y$.

\begin{figure}[h!]
\centering
\subfloat[]{
\begin{tikzpicture}[scale=0.53]
\draw[fill] (8.7,3.1) circle (5pt);
\coordinate [label={above:  {\Large{$x$}}}] (E) at (8.7,3.1) ;

\coordinate [label={above:  {\Large{$\textcolor{blue}{\Omega_{\rm p,2}}$}}}] (E) at (7.7,3.5) ;

\draw[fill] (2.05,1.23) circle (5pt);
\coordinate [label={left:  {\Large{$y$}}}] (E) at (2.05,1.23) ;

\coordinate [label={left:  {\Large{$\textcolor{red}{\Omega_{\rm p,1}}$}}}] (E) at (2.4,4) ;

\begin{axis}[
axis lines=none,
width=\textwidth,
ymax=0.5,
xmax=1.2,
xmin=-0.2,
ymin=-0.5,
height=0.5\textwidth]

{\addplot [thick, black, solid, forget plot]
coordinates {
(0, 0) 
(0.005, 0.081421) 
(0.01, 0.11358) 
(0.015, 0.137578) 
(0.02, 0.157318) 
(0.025, 0.174315) 
(0.03, 0.189343) 
(0.035, 0.202865) 
(0.04, 0.215181) 
(0.045, 0.226502) 
(0.05, 0.236979) 
(0.055, 0.246728) 
(0.06, 0.255839) 
(0.065, 0.264383) 
(0.07, 0.272418) 
(0.075, 0.279993) 
(0.08, 0.287148) 
(0.085, 0.293917) 
(0.09, 0.30033) 
(0.095, 0.306411) 
(0.1, 0.312184) 
(0.105, 0.317667) 
(0.11, 0.322878) 
(0.115, 0.327832) 
(0.12, 0.332544) 
(0.125, 0.337025) 
(0.13, 0.341287) 
(0.135, 0.345341) 
(0.14, 0.349195) 
(0.145, 0.352859) 
(0.15, 0.35634) 
(0.155, 0.359645) 
(0.16, 0.362783) 
(0.165, 0.365757) 
(0.17, 0.368576) 
(0.175, 0.371244) 
(0.18, 0.373766) 
(0.185, 0.376147) 
(0.19, 0.378392) 
(0.195, 0.380505) 
(0.2, 0.382489) 
(0.205, 0.38435) 
(0.21, 0.38609) 
(0.215, 0.387713) 
(0.22, 0.389222) 
(0.225, 0.39062) 
(0.23, 0.391911) 
(0.235, 0.393097) 
(0.24, 0.39418) 
(0.245, 0.395164) 
(0.25, 0.39605) 
(0.255, 0.396842) 
(0.26, 0.397541) 
(0.265, 0.398149) 
(0.27, 0.39867) 
(0.275, 0.399104) 
(0.28, 0.399454) 
(0.285, 0.399721) 
(0.29, 0.399908) 
(0.295, 0.400016) 
(0.3, 0.400047) 
(0.305, 0.400002) 
(0.31, 0.399884) 
(0.315, 0.399693) 
(0.32, 0.399431) 
(0.325, 0.3991) 
(0.33, 0.3987) 
(0.335, 0.398234) 
(0.34, 0.397703) 
(0.345, 0.397107) 
(0.35, 0.396448) 
(0.355, 0.395728) 
(0.36, 0.394947) 
(0.365, 0.394107) 
(0.37, 0.393209) 
(0.375, 0.392253) 
(0.38, 0.391241) 
(0.385, 0.390174) 
(0.39, 0.389053) 
(0.395, 0.387879) 
(0.4, 0.386652) 
(0.405, 0.385374) 
(0.41, 0.384046) 
(0.415, 0.382668) 
(0.42, 0.381242) 
(0.425, 0.379767) 
(0.43, 0.378246) 
(0.435, 0.376678) 
(0.44, 0.375065) 
(0.445, 0.373407) 
(0.45, 0.371705) 
(0.455, 0.369959) 
(0.46, 0.368171) 
(0.465, 0.366341) 
(0.47, 0.36447) 
(0.475, 0.362558) 
(0.48, 0.360606) 
(0.485, 0.358615) 
(0.49, 0.356584) 
(0.495, 0.354516) 
(0.5, 0.35241) 
(0.505, 0.350267) 
(0.51, 0.348087) 
(0.515, 0.345871) 
(0.52, 0.34362) 
(0.525, 0.341334) 
(0.53, 0.339013) 
(0.535, 0.336658) 
(0.54, 0.334269) 
(0.545, 0.331848) 
(0.55, 0.329393) 
(0.555, 0.326906) 
(0.56, 0.324388) 
(0.565, 0.321838) 
(0.57, 0.319256) 
(0.575, 0.316644) 
(0.58, 0.314002) 
(0.585, 0.31133) 
(0.59, 0.308628) 
(0.595, 0.305896) 
(0.6, 0.303136) 
(0.605, 0.300347) 
(0.61, 0.297529) 
(0.615, 0.294684) 
(0.62, 0.29181) 
(0.625, 0.28891) 
(0.63, 0.285981) 
(0.635, 0.283026) 
(0.64, 0.280044) 
(0.645, 0.277035) 
(0.65, 0.274) 
(0.655, 0.270939) 
(0.66, 0.267852) 
(0.665, 0.264739) 
(0.67, 0.261601) 
(0.675, 0.258437) 
(0.68, 0.255248) 
(0.685, 0.252034) 
(0.69, 0.248795) 
(0.695, 0.245532) 
(0.7, 0.242244) 
(0.705, 0.238931) 
(0.71, 0.235594) 
(0.715, 0.232232) 
(0.72, 0.228847) 
(0.725, 0.225437) 
(0.73, 0.222003) 
(0.735, 0.218545) 
(0.74, 0.215064) 
(0.745, 0.211558) 
(0.75, 0.208029) 
(0.755, 0.204476) 
(0.76, 0.2009) 
(0.765, 0.197299) 
(0.77, 0.193675) 
(0.775, 0.190028) 
(0.78, 0.186356) 
(0.785, 0.182661) 
(0.79, 0.178943) 
(0.795, 0.1752) 
(0.8, 0.171434) 
(0.805, 0.167644) 
(0.81, 0.16383) 
(0.815, 0.159992) 
(0.82, 0.156131) 
(0.825, 0.152245) 
(0.83, 0.148335) 
(0.835, 0.144401) 
(0.84, 0.140443) 
(0.845, 0.13646) 
(0.85, 0.132453) 
(0.855, 0.128421) 
(0.86, 0.124364) 
(0.865, 0.120283) 
(0.87, 0.116176) 
(0.875, 0.112045) 
(0.88, 0.107888) 
(0.885, 0.103705) 
(0.89, 0.0994971) 
(0.895, 0.0952632) 
(0.9, 0.0910032) 
(0.905, 0.0867171) 
(0.91, 0.0824047) 
(0.915, 0.0780656) 
(0.92, 0.0736997) 
(0.925, 0.0693068) 
(0.93, 0.0648866) 
(0.935, 0.060439) 
(0.94, 0.0559636) 
(0.945, 0.0514603) 
(0.95, 0.0469288) 
(0.955, 0.0423687) 
(0.96, 0.0377799) 
(0.965, 0.0331621) 
(0.97, 0.028515) 
(0.975, 0.0238383) 
(0.98, 0.0191316) 
(0.985, 0.0143948) 
(0.99, 0.00962748) 
(0.995, 0.00482931) 
(1, -1.11022e-16) 
 };}

{\addplot [thick, black, solid, forget plot]
coordinates {
(0, -0) 
(0.005, -0.081421) 
(0.01, -0.11358) 
(0.015, -0.137578) 
(0.02, -0.157318) 
(0.025, -0.174315) 
(0.03, -0.189343) 
(0.035, -0.202865) 
(0.04, -0.215181) 
(0.045, -0.226502) 
(0.05, -0.236979) 
(0.055, -0.246728) 
(0.06, -0.255839) 
(0.065, -0.264383) 
(0.07, -0.272418) 
(0.075, -0.279993) 
(0.08, -0.287148) 
(0.085, -0.293917) 
(0.09, -0.30033) 
(0.095, -0.306411) 
(0.1, -0.312184) 
(0.105, -0.317667) 
(0.11, -0.322878) 
(0.115, -0.327832) 
(0.12, -0.332544) 
(0.125, -0.337025) 
(0.13, -0.341287) 
(0.135, -0.345341) 
(0.14, -0.349195) 
(0.145, -0.352859) 
(0.15, -0.35634) 
(0.155, -0.359645) 
(0.16, -0.362783) 
(0.165, -0.365757) 
(0.17, -0.368576) 
(0.175, -0.371244) 
(0.18, -0.373766) 
(0.185, -0.376147) 
(0.19, -0.378392) 
(0.195, -0.380505) 
(0.2, -0.382489) 
(0.205, -0.38435) 
(0.21, -0.38609) 
(0.215, -0.387713) 
(0.22, -0.389222) 
(0.225, -0.39062) 
(0.23, -0.391911) 
(0.235, -0.393097) 
(0.24, -0.39418) 
(0.245, -0.395164) 
(0.25, -0.39605) 
(0.255, -0.396842) 
(0.26, -0.397541) 
(0.265, -0.398149) 
(0.27, -0.39867) 
(0.275, -0.399104) 
(0.28, -0.399454) 
(0.285, -0.399721) 
(0.29, -0.399908) 
(0.295, -0.400016) 
(0.3, -0.400047) 
(0.305, -0.400002) 
(0.31, -0.399884) 
(0.315, -0.399693) 
(0.32, -0.399431) 
(0.325, -0.3991) 
(0.33, -0.3987) 
(0.335, -0.398234) 
(0.34, -0.397703) 
(0.345, -0.397107) 
(0.35, -0.396448) 
(0.355, -0.395728) 
(0.36, -0.394947) 
(0.365, -0.394107) 
(0.37, -0.393209) 
(0.375, -0.392253) 
(0.38, -0.391241) 
(0.385, -0.390174) 
(0.39, -0.389053) 
(0.395, -0.387879) 
(0.4, -0.386652) 
(0.405, -0.385374) 
(0.41, -0.384046) 
(0.415, -0.382668) 
(0.42, -0.381242) 
(0.425, -0.379767) 
(0.43, -0.378246) 
(0.435, -0.376678) 
(0.44, -0.375065) 
(0.445, -0.373407) 
(0.45, -0.371705) 
(0.455, -0.369959) 
(0.46, -0.368171) 
(0.465, -0.366341) 
(0.47, -0.36447) 
(0.475, -0.362558) 
(0.48, -0.360606) 
(0.485, -0.358615) 
(0.49, -0.356584) 
(0.495, -0.354516) 
(0.5, -0.35241) 
(0.505, -0.350267) 
(0.51, -0.348087) 
(0.515, -0.345871) 
(0.52, -0.34362) 
(0.525, -0.341334) 
(0.53, -0.339013) 
(0.535, -0.336658) 
(0.54, -0.334269) 
(0.545, -0.331848) 
(0.55, -0.329393) 
(0.555, -0.326906) 
(0.56, -0.324388) 
(0.565, -0.321838) 
(0.57, -0.319256) 
(0.575, -0.316644) 
(0.58, -0.314002) 
(0.585, -0.31133) 
(0.59, -0.308628) 
(0.595, -0.305896) 
(0.6, -0.303136) 
(0.605, -0.300347) 
(0.61, -0.297529) 
(0.615, -0.294684) 
(0.62, -0.29181) 
(0.625, -0.28891) 
(0.63, -0.285981) 
(0.635, -0.283026) 
(0.64, -0.280044) 
(0.645, -0.277035) 
(0.65, -0.274) 
(0.655, -0.270939) 
(0.66, -0.267852) 
(0.665, -0.264739) 
(0.67, -0.261601) 
(0.675, -0.258437) 
(0.68, -0.255248) 
(0.685, -0.252034) 
(0.69, -0.248795) 
(0.695, -0.245532) 
(0.7, -0.242244) 
(0.705, -0.238931) 
(0.71, -0.235594) 
(0.715, -0.232232) 
(0.72, -0.228847) 
(0.725, -0.225437) 
(0.73, -0.222003) 
(0.735, -0.218545) 
(0.74, -0.215064) 
(0.745, -0.211558) 
(0.75, -0.208029) 
(0.755, -0.204476) 
(0.76, -0.2009) 
(0.765, -0.197299) 
(0.77, -0.193675) 
(0.775, -0.190028) 
(0.78, -0.186356) 
(0.785, -0.182661) 
(0.79, -0.178943) 
(0.795, -0.1752) 
(0.8, -0.171434) 
(0.805, -0.167644) 
(0.81, -0.16383) 
(0.815, -0.159992) 
(0.82, -0.156131) 
(0.825, -0.152245) 
(0.83, -0.148335) 
(0.835, -0.144401) 
(0.84, -0.140443) 
(0.845, -0.13646) 
(0.85, -0.132453) 
(0.855, -0.128421) 
(0.86, -0.124364) 
(0.865, -0.120283) 
(0.87, -0.116176) 
(0.875, -0.112045) 
(0.88, -0.107888) 
(0.885, -0.103705) 
(0.89, -0.0994971) 
(0.895, -0.0952632) 
(0.9, -0.0910032) 
(0.905, -0.0867171) 
(0.91, -0.0824047) 
(0.915, -0.0780656) 
(0.92, -0.0736997) 
(0.925, -0.0693068) 
(0.93, -0.0648866) 
(0.935, -0.060439) 
(0.94, -0.0559636) 
(0.945, -0.0514603) 
(0.95, -0.0469288) 
(0.955, -0.0423687) 
(0.96, -0.0377799) 
(0.965, -0.0331621) 
(0.97, -0.028515) 
(0.975, -0.0238383) 
(0.98, -0.0191316) 
(0.985, -0.0143948) 
(0.99, -0.00962748) 
(0.995, -0.00482931) 
(1, 0) 
};}

{\addplot [ultra thick, red, dashed, forget plot]
coordinates {
(0.1, 0.312184) 
(1, 0) 
(0.1, -0.312184) 
(0.1, 0.312184) 
};}

{\addplot [ultra thick, blue, dashed, forget plot]
coordinates {
(0, 0) 
(0.5, 0.35241) 
(1, 0) 
(0.5, -0.35241) 
(0, 0)
};}

\end{axis}
\end{tikzpicture}
}
~~
\subfloat[$\texttt{N}_3$]{ 
\begin{tikzpicture}[scale=0.53]
\draw[fill] (8.7,3.1) circle (5pt);
\coordinate [label={above:  {\Large{$x$}}}] (E) at (8.7,3.1) ;

\draw[fill] (2.05,1.23) circle (5pt);
\coordinate [label={left:  {\Large{$y$}}}] (E) at (2.05,1.23) ;

\draw[fill] (3.3,4.33) circle (5pt);
\coordinate [label={above:  {\Large{$x_1$}}}] (E) at (3.3,4.33) ;

\draw[->,dashed,red] (8.6,3.15) -- (3.63,4.3);

\begin{axis}[
axis lines=none,
width=\textwidth,
ymax=0.5,
xmax=1.2,
xmin=-0.2,
ymin=-0.5,
height=0.5\textwidth]

{\addplot [thick, black, solid, forget plot]
coordinates {
(0, 0) 
(0.005, 0.081421) 
(0.01, 0.11358) 
(0.015, 0.137578) 
(0.02, 0.157318) 
(0.025, 0.174315) 
(0.03, 0.189343) 
(0.035, 0.202865) 
(0.04, 0.215181) 
(0.045, 0.226502) 
(0.05, 0.236979) 
(0.055, 0.246728) 
(0.06, 0.255839) 
(0.065, 0.264383) 
(0.07, 0.272418) 
(0.075, 0.279993) 
(0.08, 0.287148) 
(0.085, 0.293917) 
(0.09, 0.30033) 
(0.095, 0.306411) 
(0.1, 0.312184) 
(0.105, 0.317667) 
(0.11, 0.322878) 
(0.115, 0.327832) 
(0.12, 0.332544) 
(0.125, 0.337025) 
(0.13, 0.341287) 
(0.135, 0.345341) 
(0.14, 0.349195) 
(0.145, 0.352859) 
(0.15, 0.35634) 
(0.155, 0.359645) 
(0.16, 0.362783) 
(0.165, 0.365757) 
(0.17, 0.368576) 
(0.175, 0.371244) 
(0.18, 0.373766) 
(0.185, 0.376147) 
(0.19, 0.378392) 
(0.195, 0.380505) 
(0.2, 0.382489) 
(0.205, 0.38435) 
(0.21, 0.38609) 
(0.215, 0.387713) 
(0.22, 0.389222) 
(0.225, 0.39062) 
(0.23, 0.391911) 
(0.235, 0.393097) 
(0.24, 0.39418) 
(0.245, 0.395164) 
(0.25, 0.39605) 
(0.255, 0.396842) 
(0.26, 0.397541) 
(0.265, 0.398149) 
(0.27, 0.39867) 
(0.275, 0.399104) 
(0.28, 0.399454) 
(0.285, 0.399721) 
(0.29, 0.399908) 
(0.295, 0.400016) 
(0.3, 0.400047) 
(0.305, 0.400002) 
(0.31, 0.399884) 
(0.315, 0.399693) 
(0.32, 0.399431) 
(0.325, 0.3991) 
(0.33, 0.3987) 
(0.335, 0.398234) 
(0.34, 0.397703) 
(0.345, 0.397107) 
(0.35, 0.396448) 
(0.355, 0.395728) 
(0.36, 0.394947) 
(0.365, 0.394107) 
(0.37, 0.393209) 
(0.375, 0.392253) 
(0.38, 0.391241) 
(0.385, 0.390174) 
(0.39, 0.389053) 
(0.395, 0.387879) 
(0.4, 0.386652) 
(0.405, 0.385374) 
(0.41, 0.384046) 
(0.415, 0.382668) 
(0.42, 0.381242) 
(0.425, 0.379767) 
(0.43, 0.378246) 
(0.435, 0.376678) 
(0.44, 0.375065) 
(0.445, 0.373407) 
(0.45, 0.371705) 
(0.455, 0.369959) 
(0.46, 0.368171) 
(0.465, 0.366341) 
(0.47, 0.36447) 
(0.475, 0.362558) 
(0.48, 0.360606) 
(0.485, 0.358615) 
(0.49, 0.356584) 
(0.495, 0.354516) 
(0.5, 0.35241) 
(0.505, 0.350267) 
(0.51, 0.348087) 
(0.515, 0.345871) 
(0.52, 0.34362) 
(0.525, 0.341334) 
(0.53, 0.339013) 
(0.535, 0.336658) 
(0.54, 0.334269) 
(0.545, 0.331848) 
(0.55, 0.329393) 
(0.555, 0.326906) 
(0.56, 0.324388) 
(0.565, 0.321838) 
(0.57, 0.319256) 
(0.575, 0.316644) 
(0.58, 0.314002) 
(0.585, 0.31133) 
(0.59, 0.308628) 
(0.595, 0.305896) 
(0.6, 0.303136) 
(0.605, 0.300347) 
(0.61, 0.297529) 
(0.615, 0.294684) 
(0.62, 0.29181) 
(0.625, 0.28891) 
(0.63, 0.285981) 
(0.635, 0.283026) 
(0.64, 0.280044) 
(0.645, 0.277035) 
(0.65, 0.274) 
(0.655, 0.270939) 
(0.66, 0.267852) 
(0.665, 0.264739) 
(0.67, 0.261601) 
(0.675, 0.258437) 
(0.68, 0.255248) 
(0.685, 0.252034) 
(0.69, 0.248795) 
(0.695, 0.245532) 
(0.7, 0.242244) 
(0.705, 0.238931) 
(0.71, 0.235594) 
(0.715, 0.232232) 
(0.72, 0.228847) 
(0.725, 0.225437) 
(0.73, 0.222003) 
(0.735, 0.218545) 
(0.74, 0.215064) 
(0.745, 0.211558) 
(0.75, 0.208029) 
(0.755, 0.204476) 
(0.76, 0.2009) 
(0.765, 0.197299) 
(0.77, 0.193675) 
(0.775, 0.190028) 
(0.78, 0.186356) 
(0.785, 0.182661) 
(0.79, 0.178943) 
(0.795, 0.1752) 
(0.8, 0.171434) 
(0.805, 0.167644) 
(0.81, 0.16383) 
(0.815, 0.159992) 
(0.82, 0.156131) 
(0.825, 0.152245) 
(0.83, 0.148335) 
(0.835, 0.144401) 
(0.84, 0.140443) 
(0.845, 0.13646) 
(0.85, 0.132453) 
(0.855, 0.128421) 
(0.86, 0.124364) 
(0.865, 0.120283) 
(0.87, 0.116176) 
(0.875, 0.112045) 
(0.88, 0.107888) 
(0.885, 0.103705) 
(0.89, 0.0994971) 
(0.895, 0.0952632) 
(0.9, 0.0910032) 
(0.905, 0.0867171) 
(0.91, 0.0824047) 
(0.915, 0.0780656) 
(0.92, 0.0736997) 
(0.925, 0.0693068) 
(0.93, 0.0648866) 
(0.935, 0.060439) 
(0.94, 0.0559636) 
(0.945, 0.0514603) 
(0.95, 0.0469288) 
(0.955, 0.0423687) 
(0.96, 0.0377799) 
(0.965, 0.0331621) 
(0.97, 0.028515) 
(0.975, 0.0238383) 
(0.98, 0.0191316) 
(0.985, 0.0143948) 
(0.99, 0.00962748) 
(0.995, 0.00482931) 
(1, -1.11022e-16) 
 };}

{\addplot [thick, black, solid, forget plot]
coordinates {
(0, -0) 
(0.005, -0.081421) 
(0.01, -0.11358) 
(0.015, -0.137578) 
(0.02, -0.157318) 
(0.025, -0.174315) 
(0.03, -0.189343) 
(0.035, -0.202865) 
(0.04, -0.215181) 
(0.045, -0.226502) 
(0.05, -0.236979) 
(0.055, -0.246728) 
(0.06, -0.255839) 
(0.065, -0.264383) 
(0.07, -0.272418) 
(0.075, -0.279993) 
(0.08, -0.287148) 
(0.085, -0.293917) 
(0.09, -0.30033) 
(0.095, -0.306411) 
(0.1, -0.312184) 
(0.105, -0.317667) 
(0.11, -0.322878) 
(0.115, -0.327832) 
(0.12, -0.332544) 
(0.125, -0.337025) 
(0.13, -0.341287) 
(0.135, -0.345341) 
(0.14, -0.349195) 
(0.145, -0.352859) 
(0.15, -0.35634) 
(0.155, -0.359645) 
(0.16, -0.362783) 
(0.165, -0.365757) 
(0.17, -0.368576) 
(0.175, -0.371244) 
(0.18, -0.373766) 
(0.185, -0.376147) 
(0.19, -0.378392) 
(0.195, -0.380505) 
(0.2, -0.382489) 
(0.205, -0.38435) 
(0.21, -0.38609) 
(0.215, -0.387713) 
(0.22, -0.389222) 
(0.225, -0.39062) 
(0.23, -0.391911) 
(0.235, -0.393097) 
(0.24, -0.39418) 
(0.245, -0.395164) 
(0.25, -0.39605) 
(0.255, -0.396842) 
(0.26, -0.397541) 
(0.265, -0.398149) 
(0.27, -0.39867) 
(0.275, -0.399104) 
(0.28, -0.399454) 
(0.285, -0.399721) 
(0.29, -0.399908) 
(0.295, -0.400016) 
(0.3, -0.400047) 
(0.305, -0.400002) 
(0.31, -0.399884) 
(0.315, -0.399693) 
(0.32, -0.399431) 
(0.325, -0.3991) 
(0.33, -0.3987) 
(0.335, -0.398234) 
(0.34, -0.397703) 
(0.345, -0.397107) 
(0.35, -0.396448) 
(0.355, -0.395728) 
(0.36, -0.394947) 
(0.365, -0.394107) 
(0.37, -0.393209) 
(0.375, -0.392253) 
(0.38, -0.391241) 
(0.385, -0.390174) 
(0.39, -0.389053) 
(0.395, -0.387879) 
(0.4, -0.386652) 
(0.405, -0.385374) 
(0.41, -0.384046) 
(0.415, -0.382668) 
(0.42, -0.381242) 
(0.425, -0.379767) 
(0.43, -0.378246) 
(0.435, -0.376678) 
(0.44, -0.375065) 
(0.445, -0.373407) 
(0.45, -0.371705) 
(0.455, -0.369959) 
(0.46, -0.368171) 
(0.465, -0.366341) 
(0.47, -0.36447) 
(0.475, -0.362558) 
(0.48, -0.360606) 
(0.485, -0.358615) 
(0.49, -0.356584) 
(0.495, -0.354516) 
(0.5, -0.35241) 
(0.505, -0.350267) 
(0.51, -0.348087) 
(0.515, -0.345871) 
(0.52, -0.34362) 
(0.525, -0.341334) 
(0.53, -0.339013) 
(0.535, -0.336658) 
(0.54, -0.334269) 
(0.545, -0.331848) 
(0.55, -0.329393) 
(0.555, -0.326906) 
(0.56, -0.324388) 
(0.565, -0.321838) 
(0.57, -0.319256) 
(0.575, -0.316644) 
(0.58, -0.314002) 
(0.585, -0.31133) 
(0.59, -0.308628) 
(0.595, -0.305896) 
(0.6, -0.303136) 
(0.605, -0.300347) 
(0.61, -0.297529) 
(0.615, -0.294684) 
(0.62, -0.29181) 
(0.625, -0.28891) 
(0.63, -0.285981) 
(0.635, -0.283026) 
(0.64, -0.280044) 
(0.645, -0.277035) 
(0.65, -0.274) 
(0.655, -0.270939) 
(0.66, -0.267852) 
(0.665, -0.264739) 
(0.67, -0.261601) 
(0.675, -0.258437) 
(0.68, -0.255248) 
(0.685, -0.252034) 
(0.69, -0.248795) 
(0.695, -0.245532) 
(0.7, -0.242244) 
(0.705, -0.238931) 
(0.71, -0.235594) 
(0.715, -0.232232) 
(0.72, -0.228847) 
(0.725, -0.225437) 
(0.73, -0.222003) 
(0.735, -0.218545) 
(0.74, -0.215064) 
(0.745, -0.211558) 
(0.75, -0.208029) 
(0.755, -0.204476) 
(0.76, -0.2009) 
(0.765, -0.197299) 
(0.77, -0.193675) 
(0.775, -0.190028) 
(0.78, -0.186356) 
(0.785, -0.182661) 
(0.79, -0.178943) 
(0.795, -0.1752) 
(0.8, -0.171434) 
(0.805, -0.167644) 
(0.81, -0.16383) 
(0.815, -0.159992) 
(0.82, -0.156131) 
(0.825, -0.152245) 
(0.83, -0.148335) 
(0.835, -0.144401) 
(0.84, -0.140443) 
(0.845, -0.13646) 
(0.85, -0.132453) 
(0.855, -0.128421) 
(0.86, -0.124364) 
(0.865, -0.120283) 
(0.87, -0.116176) 
(0.875, -0.112045) 
(0.88, -0.107888) 
(0.885, -0.103705) 
(0.89, -0.0994971) 
(0.895, -0.0952632) 
(0.9, -0.0910032) 
(0.905, -0.0867171) 
(0.91, -0.0824047) 
(0.915, -0.0780656) 
(0.92, -0.0736997) 
(0.925, -0.0693068) 
(0.93, -0.0648866) 
(0.935, -0.060439) 
(0.94, -0.0559636) 
(0.945, -0.0514603) 
(0.95, -0.0469288) 
(0.955, -0.0423687) 
(0.96, -0.0377799) 
(0.965, -0.0331621) 
(0.97, -0.028515) 
(0.975, -0.0238383) 
(0.98, -0.0191316) 
(0.985, -0.0143948) 
(0.99, -0.00962748) 
(0.995, -0.00482931) 
(1, 0) 
};}

{\addplot [ultra thick, red, dashed, forget plot]
coordinates {
(0.1, 0.312184) 
(1, 0) 
(0.1, -0.312184) 
(0.1, 0.312184) 
};}

\end{axis}
\end{tikzpicture}
}

\subfloat[$\texttt{N}_2$]{
\begin{tikzpicture}[scale=0.53]
\draw[fill] (8.7,3.1) circle (5pt);
\coordinate [label={above:  {\Large{$x$}}}] (E) at (8.7,3.1) ;

\draw[fill] (2.05,1.23) circle (5pt);
\coordinate [label={left:  {\Large{$y$}}}] (E) at (2.05,1.23) ;

\draw[fill] (3.3,4.33) circle (5pt);
\coordinate [label={above:  {\Large{$x_1$}}}] (E) at (3.3,4.33) ;

\draw[fill] (1.74,3) circle (5pt);
\coordinate [label={left:  {\Large{$x_2$}}}] (E) at (1.74,3) ;

\draw[->,dashed,blue] (2.95,4.1) -- (1.85,3.2);

%
%
%
%
%

\begin{axis}[
axis lines=none,
width=\textwidth,
ymax=0.5,
xmax=1.2,
xmin=-0.2,
ymin=-0.5,
height=0.5\textwidth]

{\addplot [thick, black, solid, forget plot]
coordinates {
(0, 0) 
(0.005, 0.081421) 
(0.01, 0.11358) 
(0.015, 0.137578) 
(0.02, 0.157318) 
(0.025, 0.174315) 
(0.03, 0.189343) 
(0.035, 0.202865) 
(0.04, 0.215181) 
(0.045, 0.226502) 
(0.05, 0.236979) 
(0.055, 0.246728) 
(0.06, 0.255839) 
(0.065, 0.264383) 
(0.07, 0.272418) 
(0.075, 0.279993) 
(0.08, 0.287148) 
(0.085, 0.293917) 
(0.09, 0.30033) 
(0.095, 0.306411) 
(0.1, 0.312184) 
(0.105, 0.317667) 
(0.11, 0.322878) 
(0.115, 0.327832) 
(0.12, 0.332544) 
(0.125, 0.337025) 
(0.13, 0.341287) 
(0.135, 0.345341) 
(0.14, 0.349195) 
(0.145, 0.352859) 
(0.15, 0.35634) 
(0.155, 0.359645) 
(0.16, 0.362783) 
(0.165, 0.365757) 
(0.17, 0.368576) 
(0.175, 0.371244) 
(0.18, 0.373766) 
(0.185, 0.376147) 
(0.19, 0.378392) 
(0.195, 0.380505) 
(0.2, 0.382489) 
(0.205, 0.38435) 
(0.21, 0.38609) 
(0.215, 0.387713) 
(0.22, 0.389222) 
(0.225, 0.39062) 
(0.23, 0.391911) 
(0.235, 0.393097) 
(0.24, 0.39418) 
(0.245, 0.395164) 
(0.25, 0.39605) 
(0.255, 0.396842) 
(0.26, 0.397541) 
(0.265, 0.398149) 
(0.27, 0.39867) 
(0.275, 0.399104) 
(0.28, 0.399454) 
(0.285, 0.399721) 
(0.29, 0.399908) 
(0.295, 0.400016) 
(0.3, 0.400047) 
(0.305, 0.400002) 
(0.31, 0.399884) 
(0.315, 0.399693) 
(0.32, 0.399431) 
(0.325, 0.3991) 
(0.33, 0.3987) 
(0.335, 0.398234) 
(0.34, 0.397703) 
(0.345, 0.397107) 
(0.35, 0.396448) 
(0.355, 0.395728) 
(0.36, 0.394947) 
(0.365, 0.394107) 
(0.37, 0.393209) 
(0.375, 0.392253) 
(0.38, 0.391241) 
(0.385, 0.390174) 
(0.39, 0.389053) 
(0.395, 0.387879) 
(0.4, 0.386652) 
(0.405, 0.385374) 
(0.41, 0.384046) 
(0.415, 0.382668) 
(0.42, 0.381242) 
(0.425, 0.379767) 
(0.43, 0.378246) 
(0.435, 0.376678) 
(0.44, 0.375065) 
(0.445, 0.373407) 
(0.45, 0.371705) 
(0.455, 0.369959) 
(0.46, 0.368171) 
(0.465, 0.366341) 
(0.47, 0.36447) 
(0.475, 0.362558) 
(0.48, 0.360606) 
(0.485, 0.358615) 
(0.49, 0.356584) 
(0.495, 0.354516) 
(0.5, 0.35241) 
(0.505, 0.350267) 
(0.51, 0.348087) 
(0.515, 0.345871) 
(0.52, 0.34362) 
(0.525, 0.341334) 
(0.53, 0.339013) 
(0.535, 0.336658) 
(0.54, 0.334269) 
(0.545, 0.331848) 
(0.55, 0.329393) 
(0.555, 0.326906) 
(0.56, 0.324388) 
(0.565, 0.321838) 
(0.57, 0.319256) 
(0.575, 0.316644) 
(0.58, 0.314002) 
(0.585, 0.31133) 
(0.59, 0.308628) 
(0.595, 0.305896) 
(0.6, 0.303136) 
(0.605, 0.300347) 
(0.61, 0.297529) 
(0.615, 0.294684) 
(0.62, 0.29181) 
(0.625, 0.28891) 
(0.63, 0.285981) 
(0.635, 0.283026) 
(0.64, 0.280044) 
(0.645, 0.277035) 
(0.65, 0.274) 
(0.655, 0.270939) 
(0.66, 0.267852) 
(0.665, 0.264739) 
(0.67, 0.261601) 
(0.675, 0.258437) 
(0.68, 0.255248) 
(0.685, 0.252034) 
(0.69, 0.248795) 
(0.695, 0.245532) 
(0.7, 0.242244) 
(0.705, 0.238931) 
(0.71, 0.235594) 
(0.715, 0.232232) 
(0.72, 0.228847) 
(0.725, 0.225437) 
(0.73, 0.222003) 
(0.735, 0.218545) 
(0.74, 0.215064) 
(0.745, 0.211558) 
(0.75, 0.208029) 
(0.755, 0.204476) 
(0.76, 0.2009) 
(0.765, 0.197299) 
(0.77, 0.193675) 
(0.775, 0.190028) 
(0.78, 0.186356) 
(0.785, 0.182661) 
(0.79, 0.178943) 
(0.795, 0.1752) 
(0.8, 0.171434) 
(0.805, 0.167644) 
(0.81, 0.16383) 
(0.815, 0.159992) 
(0.82, 0.156131) 
(0.825, 0.152245) 
(0.83, 0.148335) 
(0.835, 0.144401) 
(0.84, 0.140443) 
(0.845, 0.13646) 
(0.85, 0.132453) 
(0.855, 0.128421) 
(0.86, 0.124364) 
(0.865, 0.120283) 
(0.87, 0.116176) 
(0.875, 0.112045) 
(0.88, 0.107888) 
(0.885, 0.103705) 
(0.89, 0.0994971) 
(0.895, 0.0952632) 
(0.9, 0.0910032) 
(0.905, 0.0867171) 
(0.91, 0.0824047) 
(0.915, 0.0780656) 
(0.92, 0.0736997) 
(0.925, 0.0693068) 
(0.93, 0.0648866) 
(0.935, 0.060439) 
(0.94, 0.0559636) 
(0.945, 0.0514603) 
(0.95, 0.0469288) 
(0.955, 0.0423687) 
(0.96, 0.0377799) 
(0.965, 0.0331621) 
(0.97, 0.028515) 
(0.975, 0.0238383) 
(0.98, 0.0191316) 
(0.985, 0.0143948) 
(0.99, 0.00962748) 
(0.995, 0.00482931) 
(1, -1.11022e-16) 
 };}

{\addplot [thick, black, solid, forget plot]
coordinates {
(0, -0) 
(0.005, -0.081421) 
(0.01, -0.11358) 
(0.015, -0.137578) 
(0.02, -0.157318) 
(0.025, -0.174315) 
(0.03, -0.189343) 
(0.035, -0.202865) 
(0.04, -0.215181) 
(0.045, -0.226502) 
(0.05, -0.236979) 
(0.055, -0.246728) 
(0.06, -0.255839) 
(0.065, -0.264383) 
(0.07, -0.272418) 
(0.075, -0.279993) 
(0.08, -0.287148) 
(0.085, -0.293917) 
(0.09, -0.30033) 
(0.095, -0.306411) 
(0.1, -0.312184) 
(0.105, -0.317667) 
(0.11, -0.322878) 
(0.115, -0.327832) 
(0.12, -0.332544) 
(0.125, -0.337025) 
(0.13, -0.341287) 
(0.135, -0.345341) 
(0.14, -0.349195) 
(0.145, -0.352859) 
(0.15, -0.35634) 
(0.155, -0.359645) 
(0.16, -0.362783) 
(0.165, -0.365757) 
(0.17, -0.368576) 
(0.175, -0.371244) 
(0.18, -0.373766) 
(0.185, -0.376147) 
(0.19, -0.378392) 
(0.195, -0.380505) 
(0.2, -0.382489) 
(0.205, -0.38435) 
(0.21, -0.38609) 
(0.215, -0.387713) 
(0.22, -0.389222) 
(0.225, -0.39062) 
(0.23, -0.391911) 
(0.235, -0.393097) 
(0.24, -0.39418) 
(0.245, -0.395164) 
(0.25, -0.39605) 
(0.255, -0.396842) 
(0.26, -0.397541) 
(0.265, -0.398149) 
(0.27, -0.39867) 
(0.275, -0.399104) 
(0.28, -0.399454) 
(0.285, -0.399721) 
(0.29, -0.399908) 
(0.295, -0.400016) 
(0.3, -0.400047) 
(0.305, -0.400002) 
(0.31, -0.399884) 
(0.315, -0.399693) 
(0.32, -0.399431) 
(0.325, -0.3991) 
(0.33, -0.3987) 
(0.335, -0.398234) 
(0.34, -0.397703) 
(0.345, -0.397107) 
(0.35, -0.396448) 
(0.355, -0.395728) 
(0.36, -0.394947) 
(0.365, -0.394107) 
(0.37, -0.393209) 
(0.375, -0.392253) 
(0.38, -0.391241) 
(0.385, -0.390174) 
(0.39, -0.389053) 
(0.395, -0.387879) 
(0.4, -0.386652) 
(0.405, -0.385374) 
(0.41, -0.384046) 
(0.415, -0.382668) 
(0.42, -0.381242) 
(0.425, -0.379767) 
(0.43, -0.378246) 
(0.435, -0.376678) 
(0.44, -0.375065) 
(0.445, -0.373407) 
(0.45, -0.371705) 
(0.455, -0.369959) 
(0.46, -0.368171) 
(0.465, -0.366341) 
(0.47, -0.36447) 
(0.475, -0.362558) 
(0.48, -0.360606) 
(0.485, -0.358615) 
(0.49, -0.356584) 
(0.495, -0.354516) 
(0.5, -0.35241) 
(0.505, -0.350267) 
(0.51, -0.348087) 
(0.515, -0.345871) 
(0.52, -0.34362) 
(0.525, -0.341334) 
(0.53, -0.339013) 
(0.535, -0.336658) 
(0.54, -0.334269) 
(0.545, -0.331848) 
(0.55, -0.329393) 
(0.555, -0.326906) 
(0.56, -0.324388) 
(0.565, -0.321838) 
(0.57, -0.319256) 
(0.575, -0.316644) 
(0.58, -0.314002) 
(0.585, -0.31133) 
(0.59, -0.308628) 
(0.595, -0.305896) 
(0.6, -0.303136) 
(0.605, -0.300347) 
(0.61, -0.297529) 
(0.615, -0.294684) 
(0.62, -0.29181) 
(0.625, -0.28891) 
(0.63, -0.285981) 
(0.635, -0.283026) 
(0.64, -0.280044) 
(0.645, -0.277035) 
(0.65, -0.274) 
(0.655, -0.270939) 
(0.66, -0.267852) 
(0.665, -0.264739) 
(0.67, -0.261601) 
(0.675, -0.258437) 
(0.68, -0.255248) 
(0.685, -0.252034) 
(0.69, -0.248795) 
(0.695, -0.245532) 
(0.7, -0.242244) 
(0.705, -0.238931) 
(0.71, -0.235594) 
(0.715, -0.232232) 
(0.72, -0.228847) 
(0.725, -0.225437) 
(0.73, -0.222003) 
(0.735, -0.218545) 
(0.74, -0.215064) 
(0.745, -0.211558) 
(0.75, -0.208029) 
(0.755, -0.204476) 
(0.76, -0.2009) 
(0.765, -0.197299) 
(0.77, -0.193675) 
(0.775, -0.190028) 
(0.78, -0.186356) 
(0.785, -0.182661) 
(0.79, -0.178943) 
(0.795, -0.1752) 
(0.8, -0.171434) 
(0.805, -0.167644) 
(0.81, -0.16383) 
(0.815, -0.159992) 
(0.82, -0.156131) 
(0.825, -0.152245) 
(0.83, -0.148335) 
(0.835, -0.144401) 
(0.84, -0.140443) 
(0.845, -0.13646) 
(0.85, -0.132453) 
(0.855, -0.128421) 
(0.86, -0.124364) 
(0.865, -0.120283) 
(0.87, -0.116176) 
(0.875, -0.112045) 
(0.88, -0.107888) 
(0.885, -0.103705) 
(0.89, -0.0994971) 
(0.895, -0.0952632) 
(0.9, -0.0910032) 
(0.905, -0.0867171) 
(0.91, -0.0824047) 
(0.915, -0.0780656) 
(0.92, -0.0736997) 
(0.925, -0.0693068) 
(0.93, -0.0648866) 
(0.935, -0.060439) 
(0.94, -0.0559636) 
(0.945, -0.0514603) 
(0.95, -0.0469288) 
(0.955, -0.0423687) 
(0.96, -0.0377799) 
(0.965, -0.0331621) 
(0.97, -0.028515) 
(0.975, -0.0238383) 
(0.98, -0.0191316) 
(0.985, -0.0143948) 
(0.99, -0.00962748) 
(0.995, -0.00482931) 
(1, 0) 
};}

{\addplot [ultra thick, blue, dashed, forget plot]
coordinates {
(0, 0) 
(0.5, 0.35241) 
(1, 0) 
(0.5, -0.35241) 
(0, 0)
};}

\end{axis}
\end{tikzpicture}
}
~~
\subfloat[$\texttt{N}_1$]{
\begin{tikzpicture}[scale=0.53]
\draw[fill] (8.7,3.1) circle (5pt);
\coordinate [label={above:  {\Large{$x$}}}] (E) at (8.7,3.1) ;

\draw[fill] (2.05,1.23) circle (5pt);
\coordinate [label={left:  {\Large{$y$}}}] (E) at (2.05,1.23) ;

\draw[fill] (1.74,3) circle (5pt);
\coordinate [label={left:  {\Large{$x_2$}}}] (E) at (1.74,3) ;

\draw[->,dashed,red] (1.8,2.8) -- (2.05,1.43);

\begin{axis}[
axis lines=none,
width=\textwidth,
ymax=0.5,
xmax=1.2,
xmin=-0.2,
ymin=-0.5,
height=0.5\textwidth]

{\addplot [thick, black, solid, forget plot]
coordinates {
(0, 0) 
(0.005, 0.081421) 
(0.01, 0.11358) 
(0.015, 0.137578) 
(0.02, 0.157318) 
(0.025, 0.174315) 
(0.03, 0.189343) 
(0.035, 0.202865) 
(0.04, 0.215181) 
(0.045, 0.226502) 
(0.05, 0.236979) 
(0.055, 0.246728) 
(0.06, 0.255839) 
(0.065, 0.264383) 
(0.07, 0.272418) 
(0.075, 0.279993) 
(0.08, 0.287148) 
(0.085, 0.293917) 
(0.09, 0.30033) 
(0.095, 0.306411) 
(0.1, 0.312184) 
(0.105, 0.317667) 
(0.11, 0.322878) 
(0.115, 0.327832) 
(0.12, 0.332544) 
(0.125, 0.337025) 
(0.13, 0.341287) 
(0.135, 0.345341) 
(0.14, 0.349195) 
(0.145, 0.352859) 
(0.15, 0.35634) 
(0.155, 0.359645) 
(0.16, 0.362783) 
(0.165, 0.365757) 
(0.17, 0.368576) 
(0.175, 0.371244) 
(0.18, 0.373766) 
(0.185, 0.376147) 
(0.19, 0.378392) 
(0.195, 0.380505) 
(0.2, 0.382489) 
(0.205, 0.38435) 
(0.21, 0.38609) 
(0.215, 0.387713) 
(0.22, 0.389222) 
(0.225, 0.39062) 
(0.23, 0.391911) 
(0.235, 0.393097) 
(0.24, 0.39418) 
(0.245, 0.395164) 
(0.25, 0.39605) 
(0.255, 0.396842) 
(0.26, 0.397541) 
(0.265, 0.398149) 
(0.27, 0.39867) 
(0.275, 0.399104) 
(0.28, 0.399454) 
(0.285, 0.399721) 
(0.29, 0.399908) 
(0.295, 0.400016) 
(0.3, 0.400047) 
(0.305, 0.400002) 
(0.31, 0.399884) 
(0.315, 0.399693) 
(0.32, 0.399431) 
(0.325, 0.3991) 
(0.33, 0.3987) 
(0.335, 0.398234) 
(0.34, 0.397703) 
(0.345, 0.397107) 
(0.35, 0.396448) 
(0.355, 0.395728) 
(0.36, 0.394947) 
(0.365, 0.394107) 
(0.37, 0.393209) 
(0.375, 0.392253) 
(0.38, 0.391241) 
(0.385, 0.390174) 
(0.39, 0.389053) 
(0.395, 0.387879) 
(0.4, 0.386652) 
(0.405, 0.385374) 
(0.41, 0.384046) 
(0.415, 0.382668) 
(0.42, 0.381242) 
(0.425, 0.379767) 
(0.43, 0.378246) 
(0.435, 0.376678) 
(0.44, 0.375065) 
(0.445, 0.373407) 
(0.45, 0.371705) 
(0.455, 0.369959) 
(0.46, 0.368171) 
(0.465, 0.366341) 
(0.47, 0.36447) 
(0.475, 0.362558) 
(0.48, 0.360606) 
(0.485, 0.358615) 
(0.49, 0.356584) 
(0.495, 0.354516) 
(0.5, 0.35241) 
(0.505, 0.350267) 
(0.51, 0.348087) 
(0.515, 0.345871) 
(0.52, 0.34362) 
(0.525, 0.341334) 
(0.53, 0.339013) 
(0.535, 0.336658) 
(0.54, 0.334269) 
(0.545, 0.331848) 
(0.55, 0.329393) 
(0.555, 0.326906) 
(0.56, 0.324388) 
(0.565, 0.321838) 
(0.57, 0.319256) 
(0.575, 0.316644) 
(0.58, 0.314002) 
(0.585, 0.31133) 
(0.59, 0.308628) 
(0.595, 0.305896) 
(0.6, 0.303136) 
(0.605, 0.300347) 
(0.61, 0.297529) 
(0.615, 0.294684) 
(0.62, 0.29181) 
(0.625, 0.28891) 
(0.63, 0.285981) 
(0.635, 0.283026) 
(0.64, 0.280044) 
(0.645, 0.277035) 
(0.65, 0.274) 
(0.655, 0.270939) 
(0.66, 0.267852) 
(0.665, 0.264739) 
(0.67, 0.261601) 
(0.675, 0.258437) 
(0.68, 0.255248) 
(0.685, 0.252034) 
(0.69, 0.248795) 
(0.695, 0.245532) 
(0.7, 0.242244) 
(0.705, 0.238931) 
(0.71, 0.235594) 
(0.715, 0.232232) 
(0.72, 0.228847) 
(0.725, 0.225437) 
(0.73, 0.222003) 
(0.735, 0.218545) 
(0.74, 0.215064) 
(0.745, 0.211558) 
(0.75, 0.208029) 
(0.755, 0.204476) 
(0.76, 0.2009) 
(0.765, 0.197299) 
(0.77, 0.193675) 
(0.775, 0.190028) 
(0.78, 0.186356) 
(0.785, 0.182661) 
(0.79, 0.178943) 
(0.795, 0.1752) 
(0.8, 0.171434) 
(0.805, 0.167644) 
(0.81, 0.16383) 
(0.815, 0.159992) 
(0.82, 0.156131) 
(0.825, 0.152245) 
(0.83, 0.148335) 
(0.835, 0.144401) 
(0.84, 0.140443) 
(0.845, 0.13646) 
(0.85, 0.132453) 
(0.855, 0.128421) 
(0.86, 0.124364) 
(0.865, 0.120283) 
(0.87, 0.116176) 
(0.875, 0.112045) 
(0.88, 0.107888) 
(0.885, 0.103705) 
(0.89, 0.0994971) 
(0.895, 0.0952632) 
(0.9, 0.0910032) 
(0.905, 0.0867171) 
(0.91, 0.0824047) 
(0.915, 0.0780656) 
(0.92, 0.0736997) 
(0.925, 0.0693068) 
(0.93, 0.0648866) 
(0.935, 0.060439) 
(0.94, 0.0559636) 
(0.945, 0.0514603) 
(0.95, 0.0469288) 
(0.955, 0.0423687) 
(0.96, 0.0377799) 
(0.965, 0.0331621) 
(0.97, 0.028515) 
(0.975, 0.0238383) 
(0.98, 0.0191316) 
(0.985, 0.0143948) 
(0.99, 0.00962748) 
(0.995, 0.00482931) 
(1, -1.11022e-16) 
 };}

{\addplot [thick, black, solid, forget plot]
coordinates {
(0, -0) 
(0.005, -0.081421) 
(0.01, -0.11358) 
(0.015, -0.137578) 
(0.02, -0.157318) 
(0.025, -0.174315) 
(0.03, -0.189343) 
(0.035, -0.202865) 
(0.04, -0.215181) 
(0.045, -0.226502) 
(0.05, -0.236979) 
(0.055, -0.246728) 
(0.06, -0.255839) 
(0.065, -0.264383) 
(0.07, -0.272418) 
(0.075, -0.279993) 
(0.08, -0.287148) 
(0.085, -0.293917) 
(0.09, -0.30033) 
(0.095, -0.306411) 
(0.1, -0.312184) 
(0.105, -0.317667) 
(0.11, -0.322878) 
(0.115, -0.327832) 
(0.12, -0.332544) 
(0.125, -0.337025) 
(0.13, -0.341287) 
(0.135, -0.345341) 
(0.14, -0.349195) 
(0.145, -0.352859) 
(0.15, -0.35634) 
(0.155, -0.359645) 
(0.16, -0.362783) 
(0.165, -0.365757) 
(0.17, -0.368576) 
(0.175, -0.371244) 
(0.18, -0.373766) 
(0.185, -0.376147) 
(0.19, -0.378392) 
(0.195, -0.380505) 
(0.2, -0.382489) 
(0.205, -0.38435) 
(0.21, -0.38609) 
(0.215, -0.387713) 
(0.22, -0.389222) 
(0.225, -0.39062) 
(0.23, -0.391911) 
(0.235, -0.393097) 
(0.24, -0.39418) 
(0.245, -0.395164) 
(0.25, -0.39605) 
(0.255, -0.396842) 
(0.26, -0.397541) 
(0.265, -0.398149) 
(0.27, -0.39867) 
(0.275, -0.399104) 
(0.28, -0.399454) 
(0.285, -0.399721) 
(0.29, -0.399908) 
(0.295, -0.400016) 
(0.3, -0.400047) 
(0.305, -0.400002) 
(0.31, -0.399884) 
(0.315, -0.399693) 
(0.32, -0.399431) 
(0.325, -0.3991) 
(0.33, -0.3987) 
(0.335, -0.398234) 
(0.34, -0.397703) 
(0.345, -0.397107) 
(0.35, -0.396448) 
(0.355, -0.395728) 
(0.36, -0.394947) 
(0.365, -0.394107) 
(0.37, -0.393209) 
(0.375, -0.392253) 
(0.38, -0.391241) 
(0.385, -0.390174) 
(0.39, -0.389053) 
(0.395, -0.387879) 
(0.4, -0.386652) 
(0.405, -0.385374) 
(0.41, -0.384046) 
(0.415, -0.382668) 
(0.42, -0.381242) 
(0.425, -0.379767) 
(0.43, -0.378246) 
(0.435, -0.376678) 
(0.44, -0.375065) 
(0.445, -0.373407) 
(0.45, -0.371705) 
(0.455, -0.369959) 
(0.46, -0.368171) 
(0.465, -0.366341) 
(0.47, -0.36447) 
(0.475, -0.362558) 
(0.48, -0.360606) 
(0.485, -0.358615) 
(0.49, -0.356584) 
(0.495, -0.354516) 
(0.5, -0.35241) 
(0.505, -0.350267) 
(0.51, -0.348087) 
(0.515, -0.345871) 
(0.52, -0.34362) 
(0.525, -0.341334) 
(0.53, -0.339013) 
(0.535, -0.336658) 
(0.54, -0.334269) 
(0.545, -0.331848) 
(0.55, -0.329393) 
(0.555, -0.326906) 
(0.56, -0.324388) 
(0.565, -0.321838) 
(0.57, -0.319256) 
(0.575, -0.316644) 
(0.58, -0.314002) 
(0.585, -0.31133) 
(0.59, -0.308628) 
(0.595, -0.305896) 
(0.6, -0.303136) 
(0.605, -0.300347) 
(0.61, -0.297529) 
(0.615, -0.294684) 
(0.62, -0.29181) 
(0.625, -0.28891) 
(0.63, -0.285981) 
(0.635, -0.283026) 
(0.64, -0.280044) 
(0.645, -0.277035) 
(0.65, -0.274) 
(0.655, -0.270939) 
(0.66, -0.267852) 
(0.665, -0.264739) 
(0.67, -0.261601) 
(0.675, -0.258437) 
(0.68, -0.255248) 
(0.685, -0.252034) 
(0.69, -0.248795) 
(0.695, -0.245532) 
(0.7, -0.242244) 
(0.705, -0.238931) 
(0.71, -0.235594) 
(0.715, -0.232232) 
(0.72, -0.228847) 
(0.725, -0.225437) 
(0.73, -0.222003) 
(0.735, -0.218545) 
(0.74, -0.215064) 
(0.745, -0.211558) 
(0.75, -0.208029) 
(0.755, -0.204476) 
(0.76, -0.2009) 
(0.765, -0.197299) 
(0.77, -0.193675) 
(0.775, -0.190028) 
(0.78, -0.186356) 
(0.785, -0.182661) 
(0.79, -0.178943) 
(0.795, -0.1752) 
(0.8, -0.171434) 
(0.805, -0.167644) 
(0.81, -0.16383) 
(0.815, -0.159992) 
(0.82, -0.156131) 
(0.825, -0.152245) 
(0.83, -0.148335) 
(0.835, -0.144401) 
(0.84, -0.140443) 
(0.845, -0.13646) 
(0.85, -0.132453) 
(0.855, -0.128421) 
(0.86, -0.124364) 
(0.865, -0.120283) 
(0.87, -0.116176) 
(0.875, -0.112045) 
(0.88, -0.107888) 
(0.885, -0.103705) 
(0.89, -0.0994971) 
(0.895, -0.0952632) 
(0.9, -0.0910032) 
(0.905, -0.0867171) 
(0.91, -0.0824047) 
(0.915, -0.0780656) 
(0.92, -0.0736997) 
(0.925, -0.0693068) 
(0.93, -0.0648866) 
(0.935, -0.060439) 
(0.94, -0.0559636) 
(0.945, -0.0514603) 
(0.95, -0.0469288) 
(0.955, -0.0423687) 
(0.96, -0.0377799) 
(0.965, -0.0331621) 
(0.97, -0.028515) 
(0.975, -0.0238383) 
(0.98, -0.0191316) 
(0.985, -0.0143948) 
(0.99, -0.00962748) 
(0.995, -0.00482931) 
(1, 0) 
};}

{\addplot [ultra thick, red, dashed, forget plot]
coordinates {
(0.1, 0.312184) 
(1, 0) 
(0.1, -0.312184) 
(0.1, 0.312184) 
};}

\end{axis}
\end{tikzpicture}
 }
\caption{approximation of large deformations over curved boundaries using a multi-layer ($\ell=3$) compositional map.}
\label{fig:multilayer_map}
\end{figure}
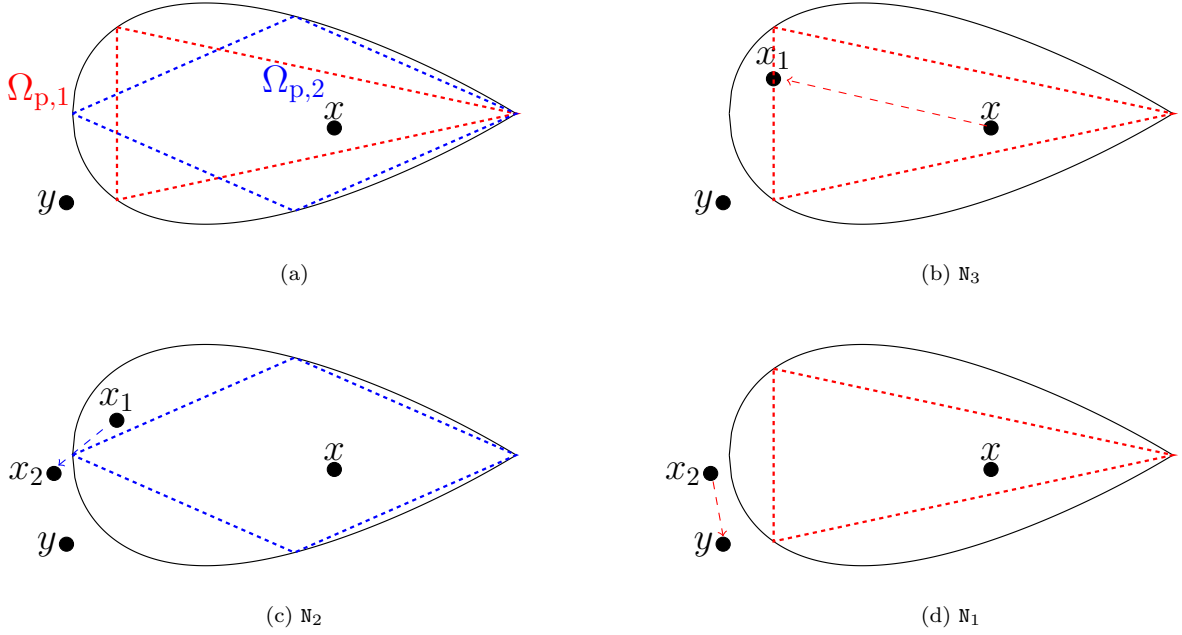

\subsection{Discussion}
\label{sec:discussion_curved}

The analysis of this section shows that compositional maps \eqref{eq:nonlinear_ansatz} and their multi-layer generalization \eqref{eq:nonlinear_ansatz_generalized} can be employed for registration in curved domains.
Since $\Psi$ in \eqref{eq:nonlinear_ansatz} and
$\Psi_1,\ldots,\Psi_{\ell}$ in \eqref{eq:nonlinear_ansatz_generalized} are independent of the coefficients $\mathbf{a}$, 
we can apply the penalty term \eqref{eq:f_pen_theory} to the polytopal map  $\texttt{N}_{\rm p}(\mathbf{a})$ in \eqref{eq:nonlinear_ansatz}, or
$\{ \texttt{N}_{{\rm p},i}(\mathbf{a}_i  \}_{i=1}^{\ell}$  in \eqref{eq:nonlinear_ansatz_generalized}; then,
we can exploit Proposition \ref{th:bijectivity} (see also the discussion in section 
 \ref{sec:ansatz_explained_polytopes}) to prove that 
\eqref{eq:nonlinear_ansatz} and  \eqref{eq:nonlinear_ansatz_generalized} satisfy \eqref{eq:desiderata_mapping_space}.

Lemmas \ref{th:density_composition} and \ref{th:density_multilayer_composition} investigate the approximation  properties of the ansätze 
\eqref{eq:nonlinear_ansatz} and  \eqref{eq:nonlinear_ansatz_generalized}: the analysis shows that multi-layer maps provide much more approximation power, even for moderate number of layers $\ell$.
We note, however, that multi-layer maps are considerably more challenging to implement and might also be significantly more expensive to evaluate: the solution to \eqref{eq:tractable_optimization_based_registration}   requires indeed many evaluations of the mapping $\texttt{N}(\mathbf{a})$ and its gradient, which involve the evaluation of the bijections $\{  \Psi_i \}_{i=1}^{\ell}$ and their inverses. In this work, we focus on the implementation of registration strategies based on the more elementary ansatz \eqref{eq:nonlinear_ansatz} and we refer to a future work for the implementation of registration methods based on the model class \eqref{eq:nonlinear_ansatz_generalized}.

\section{Methodology}
\label{sec:methods}
Given the domain $\Omega\subset \mathbb{R}^2$, we present a registration procedure based on  compositional mappings of the form \eqref{eq:nonlinear_ansatz}.
We recall (cf. \eqref{eq:tractable_optimization_based_registration})  that we consider the minimization problem
$$
\min_{   \mathbf{a}  \in \mathbb{R}^M}
 \mathfrak{f}_{\mu}^{\rm tg}(\texttt{N}( \mathbf{a} ))
\; +\xi  \; 
 \mathfrak{f}_{\rm pen}(\mathbf{a} ),
$$
where $\texttt{N}(\mathbf{a}) = \Psi \circ  \texttt{N}_{\rm p}(\mathbf{a}) \circ \Psi^{-1}$,
  $\Psi: \Omega_{\rm p} \to \Omega$ is a bijection from the polytope $\Omega_{\rm p}$ to $\Omega$ and 
$\texttt{N}_{\rm p}(\mathbf{a}): \Omega_{\rm p} \to    \Omega_{\rm p}$  satisfies
 $\texttt{N}_{\rm p}( \mathbf{a}) = 
 \texttt{id} + \sum_{i=1}^M (\mathbf{a})_i \varphi_i$  with
 $\varphi_i \cdot \mathbf{n} |_{\partial \Omega_{\rm p}} = 0$ for $i=1,\ldots,M$.
 In section \ref{sec:nonlinear_ansatz}, we discuss the construction of the polytope $\Omega_{\rm p}$ and  the mapping $\Psi$ in \eqref{eq:nonlinear_ansatz} based on a curved mesh of $\Omega$; 
in section \ref{sec:FEM}, 
we    introduce the 
FE space $\mathcal{U} = {\rm span} \{ \varphi_i \}_i$ for the displacement field in the polytope $\Omega_{\rm p}$ and we discuss the choice of the functional norm $\| \cdot \|$ for $\mathcal{U}$ that enters in the penalty $ \mathfrak{f}_{\rm pen}$; then, 
in section  \ref{sec:curved_mesh}, we discuss the definition of the curved mesh of $\Omega$.
In sections \ref{sec:penalty_discrete}, \ref{sec:target_function},
 and \ref{sec:parametric_map}, we review the choice of the penalty
$ \mathfrak{f}_{\rm pen}$ 
  and  target functions $ \mathfrak{f}_{\mu}^{\rm tg}$  
  proposed in \cite{taddei2020registration,taddei2021registration}
 and the extension to  parametric  problems.
 To simplify the presentation, we here assume that the domain $\Omega$ is parameter-independent.
  
We recall that our ultimate goal is to devise Lagrangian approximations of parametric fields. Given the parametric field $u:\Omega \times \mathcal{P} \to \mathbb{R}^{D_{\rm u}}$ , Lagrangian approximations read as
\begin{equation}
\label{eq:Lagrangian_approximations}
u_{\mu} \approx \widehat{u}_{\mu}:=\widetilde{u}_{\mu} \circ \Phi_{\mu}^{-1},
\end{equation}
where $\widetilde{u}_{\mu}$ is a low-rank approximation of the mapped field 
${u}_{\mu} \circ \Phi_{\mu}$. If we denote by $\mathcal{T}_{\rm pb}$ the high-fidelity (HF) mesh used to approximate the mapped field, 
it might be important 
(see, e.g., \cite{barral2023registration})
to ensure that the deformed mesh
$\Phi_{\mu}(\mathcal{T}_{\rm pb})$, which shares the  connectivity matrix with 
$\mathcal{T}_{\rm pb}$ but has deformed nodes according to $\Phi_{\mu}$,
 is well-behaved for all $\mu\in \mathcal{P}$: in section \ref{sec:penalty_discrete}, we discuss how to enforce this constraint in the registration procedure.
  
\subsection{Definition of the Lipschitz map $\Psi$ based on a curved mesh of $\Omega$}
\label{sec:nonlinear_ansatz}
Our point of departure is the definition of a   triangular
curved mesh $\mathcal{T}$ of degree $\kappa>0$ of the domain  
$\Omega$: the mesh $\mathcal{T}$ is uniquely identified by the mesh nodes 
and the connectivity matrix.
We define the reference (or master) element 
$\widehat{\texttt{D}} = \{ x\in (0,1)^2: \sum_{i=1}^2 (x)_i < 1 \}$ with 
 reference vertices   and nodes 
$\{ \tilde{x}_{i}^{\rm p} \}_{i=1}^{3} \subset 
\{ \tilde{x}_{i} \}_{i=1}^{n_{\rm lp}} \subset \overline{\widehat{\texttt{D}}}$, 
 and the associated Lagrangian bases 
 $\{ \ell_{i}^{\rm fe,p} \}_{i=1}^{3}$ and  $\{ \ell_{i}^{\rm fe} \}_{i=1}^{n_{\rm lp}}$  
 for polynomials of degree one,  $\mathbb{P}_{1}$, 
  and degree $\kappa$, $\mathbb{P}_{\kappa}$. 
We denote by $ \{  \texttt{D}_k \}_{k=1}^{N_{\rm e}}$ the elements of the mesh, by  $ \{  \texttt{F}_j \}_{j=1}^{N_{\rm f}}$ the facets of the mesh;
for each element $\texttt{D}_k$ of the mesh $\mathcal{T}$,
we  define the nodes 
$\{ x_{i,k}^{\rm hf} \}_{i,k}$ such that
$x_{i,k}^{\rm hf}$ is 
 the $i$-th node of the $k$-th element of  the mesh $\mathcal{T}$ for $i=1,\ldots,n_{\rm lp}$ and $k=1,\ldots,N_{\rm e}$; similarly, 
 we define the  vertices $\{  x_{j,k}^{\rm hf,v}   \}_{j,k}$ of the curved triangles for $j=1,\ldots,3$ and $k=1,\ldots,N_{\rm e}$.

We have now the elements to define the polytope $\Omega_{\rm p}$ and the mapping  $\Psi$.
We introduce 
the FE mappings
\begin{subequations}
\label{eq:geometric_mapping}
\begin{equation}
\label{eq:geometric_mapping_elemental_mapping}
\Psi_k(\tilde{x})
=
\sum_{i=1}^{n_{\rm lp}} 
x_{i,k}^{\rm hf} \ell_i^{\rm fe}(\tilde{x}),
\quad
\Psi_{k,\rm p}(\tilde{x})
=
\sum_{i=1}^{3} 
x_{i,k}^{\rm hf,v} \ell_i^{\rm fe,p}(\tilde{x}),
\quad
k=1,\ldots,N_{\rm e};
\end{equation}
for each $k=1,\ldots,N_{\rm e}$, $\Psi_k$ maps the reference element into the $k$-th element of the mesh, 
while $\Psi_{k,\rm p}$ is a linear map whose image defines the $k$-th linearized element of the mesh,
$\texttt{D}_{k,\rm p}:= \Psi_{k,\rm p}(\widehat{\texttt{D}})$.
Then, we define the polytope $\Omega_{\rm p}$ such that
\begin{equation}
\label{eq:geometric_mapping_polytope_coarsegrained}
\overline{\Omega}_{\rm p} 
:=\bigcup_{i=1}^{N_{\rm e}}
\overline{\texttt{D}}_{k,\rm p},
\end{equation} 
the linear triangular mesh 
$\mathcal{T}_{\rm p}$ with elements 
$\{  \texttt{D}_{k,\rm p}  = \Psi^{-1}(\texttt{D}_k) \}_{k=1}^{N_{\rm e}}$
  and   facets 
  $\{ \texttt{F}_{j,\rm p} = \Psi^{-1}(   \texttt{F}_{j}  )  \}_{j=1}^{N_{\rm f}}$, 
and the FE space
  \begin{equation}
  \label{eq:polynomial_space_fem}
    \mathcal{X}_{\rm hf,p} = \left\{
\phi \in C(\Omega_{\rm p}) : \phi|_{\texttt{D}_{k,\rm p}} \in \mathbb{P}_{\kappa}
\right\}.  
  \end{equation}
  Finally, we introduce the  field  $\Psi$ from $\Omega_{\rm p}$  to $\Omega$ as
\begin{equation}
\label{eq:geometric_mapping_Psi}
\Psi:\Omega_{\rm p} \to \Omega \quad {\rm s.t.} \quad
\Psi(x) \big|_{\texttt{D}_{k,\rm p}}
=
\Psi_k \left(
\Psi_{k,\rm p}^{-1}(x)
\right).
\end{equation}
\end{subequations}

Since 
$\Psi_{k,\rm p}$ is a linear map, 
$\Psi\in \mathcal{X}_{\rm hf,p}$.
If the local FE mappings \eqref{eq:geometric_mapping_elemental_mapping} are invertible with positive Jacobian determinant, we find that 
the restriction of $\Psi$ to 
$\texttt{D}_{k,\rm p}$ is a bijection from 
$\texttt{D}_{k,\rm p}$ to 
$\texttt{D}_{k}$ with positive Jacobian determinant (composition of bijections 
with positive Jacobian determinant), for $k=1,\ldots,N_{\rm e}$: therefore, 
$\Psi$ is a bijection from $\Omega_{\rm p}$ to $\Omega$.
We further observe that
by construction 
 the restriction of $\Psi$ to the vertices of $\mathcal{T}$
and to linear  facets of the mesh
  is the identity map: therefore, the pair $(\Omega_{\rm p},\Psi)$ satisfies Hypothesis \ref{hyp:regular_polytopes}.

Figure
\ref{fig:simple_example_curved_mesh}
 provides a graphical interpretation of our construction for a curved mesh with two elements.
Figure
\ref{fig:simple_example_curved_mesh}(a)  shows a curved mesh of the domain $\Omega$, while 
 Figure
\ref{fig:simple_example_curved_mesh}(b)  shows the corresponding linearized mesh.
The mapping $\Psi$ is the unique polynomial map of degree $\kappa$ that maps each element  $  \texttt{D}_{k,\rm p}  $ into the corresponding curved element $\texttt{D}_{k}$.
Notice that by construction the vertices of the mesh $\mathcal{T}$ are unchanged in 
$\mathcal{T}_{\rm p}$; notice also that linear elements of $\mathcal{T}$ are preserved by $\mathcal{T}_{\rm p}$.

\begin{figure}[h!]
\centering
\subfloat[]{
 \begin{tikzpicture}[scale=1.75]
\linethickness{0.3 mm}

\draw[ultra thick]  (-1.5,0)--(-1.5,0.8)--(1.5,0.8)--(1.5,0);

\draw[ultra thick]  (-1.5,0)--(1.5,0.8);

\draw[smooth, domain = 0:1.5, color=blue, very thick] plot (\x,{0.0625*exp(-25*\x^2)});

\draw[smooth, domain = 0:1.5, color=blue, very thick] plot (-\x,{0.0625*exp(-25*\x^2)});

\coordinate [label={above:  {\LARGE {$\Omega$}}}] (E) at (0.8, 0) ;

\coordinate [label={right:  {\LARGE {$\mathcal{T}$}}}] (E) at (1.5, 0.6) ;

\coordinate [label={right:  {\LARGE {$\texttt{D}_{1}$}}}] (E) at (-1.5, 0.3) ;

\coordinate [label={left:  {\LARGE {$\texttt{D}_{2}$}}}] (E) at (1.5, 0.3) ;
%
%
%

\end{tikzpicture}
}
~~
\subfloat[]{
\begin{tikzpicture}[scale=1.75]
\linethickness{0.3 mm}

\draw[ultra thick]  (-1.5,0)--(-1.5,0.8)--(1.5,0.8)--(1.5,0);

\draw[ultra thick]  (-1.5,0)--(1.5,0.8);

\draw[smooth, domain = 0:1.5, color=black, ultra thick] plot (\x,{0.*exp(-25*\x^2)});

\draw[smooth, domain = 0:1.5, color=black, ultra thick] plot (-\x,{0.0*exp(-25*\x^2)});

\coordinate [label={above:  {\LARGE {$\Omega_{\rm p}$}}}] (E) at (0.8, 0) ;

\coordinate [label={right:  {\LARGE {$\mathcal{T}_{\rm p}$}}}] (E) at (1.5, 0.6) ;

\coordinate [label={right:  {\LARGE {$\texttt{D}_{1,\rm p}$}}}] (E) at (-1.5, 0.3) ;

\coordinate [label={left:  {\LARGE {$\texttt{D}_{2,\rm p}$}}}] (E) at (1.5, 0.3) ;
%
%
%

\end{tikzpicture}
}
\caption{construction of the polytope $\Omega_{\rm p}$ and the mapping $\Psi$ for a curved mesh with two elements.}
\label{fig:simple_example_curved_mesh}
\end{figure}
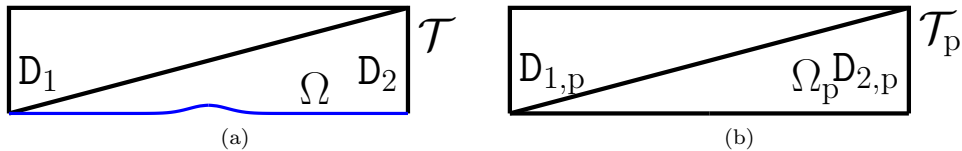 

The construction of $\Psi$ outlined above might fail for excessively coarse meshes, as shown in the example of   Figure \ref{fig:inadmissible_mesh}.
Given the domain $\Omega$ depicted in Figure \ref{fig:inadmissible_mesh}(a), we might be tempted to consider the coarse mesh in  Figure \ref{fig:inadmissible_mesh}(b): even if the curved mesh $\mathcal{T}$ associated with this partition is a proper mesh of $\Omega$, we find that the resulting polytope $\Omega_{\rm p}$ is not isomorphic to $\Omega$  up to the boundary (since $\overline{\Omega}_{\rm p}$ is a rectangle) and $\Psi$ is not continuous up to the boundary of ${\Omega}_{\rm p}$.
To fix this issue for this geometry, we should consider meshes with at least three points on the profile (cf. Figure \ref{fig:inadmissible_mesh}(c)).

We here rely on polynomial discretizations of very high-order (up to $\kappa = 10$): to ensure accurate and stable computations, we rely on a nodal-based  discretization that exploits Koornwinder polynomials to represent the local shape functions and to a tensor product of Gauss and Gauss–Radau quadratures (see, e.g., 
\cite{hesthaven2007nodal,karniadakis2005spectral}).
Instead, we consider regular (tensorized) nodes 
$\{ \tilde{x}_{i} \}_{i=1}^{n_{\rm lp}} \subset \overline{\widehat{\texttt{D}}}$;
alternative selections that improve the conditioning of the interpolation matrix can be found in
\cite{hesthaven2007nodal,karniadakis2005spectral}, 
see also 
\cite{hesthaven1998electrostatics}.

 In our framework,  the use of high-order discretizations is motivated by two independent considerations.
First,
coarse-grained meshes with relatively few elements enable rapid searches over the elements and hence guarantee rapid evaluations of the mapping $\Psi$ for new points in $\Omega_{\rm p}$ --- this feature is crucial in optimization-based registration as iterative algorithms for \eqref{eq:tractable_optimization_based_registration} require multiple evaluations of \eqref{eq:nonlinear_ansatz} for many values of $\mathbf{a}$. 
Second, since by construction
$\texttt{N}(x^{\rm v};  \mathbf{a}) = x^{\rm v}$ for all vertices $x^{\rm v}$ of $\Omega_{\rm p}$ and any $\mathbf{a}\in \mathbb{R}^M$ (cf. section \ref{sec:density_composition}), 
the   reduction of the  number of vertices on curved facets of $\partial \Omega$ 
 improves the expressiveness of the ansatz \eqref{eq:nonlinear_ansatz}.

\begin{figure}[h!]
\centering
\subfloat[]{
\includegraphics[width=.3\textwidth]{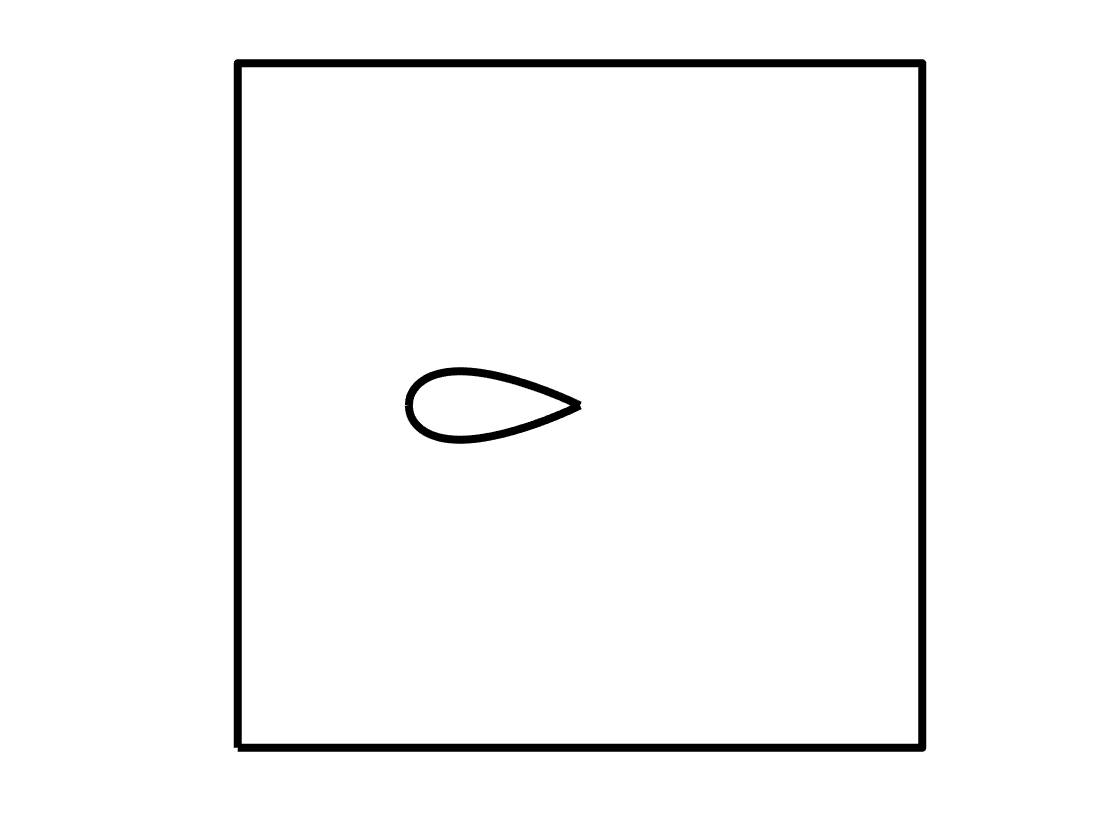}
}
\subfloat[]{ 
\includegraphics[width=.3\textwidth]{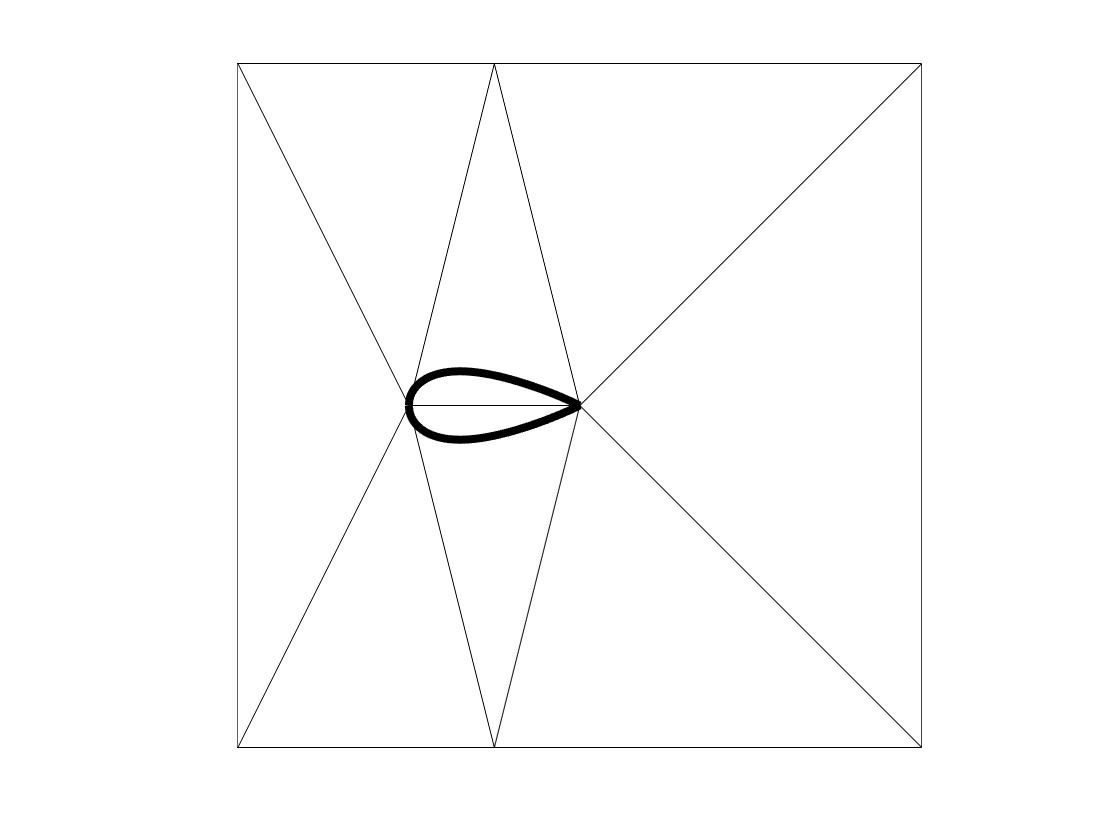}
}
~~~
\subfloat[]{
\includegraphics[width=.3\textwidth]{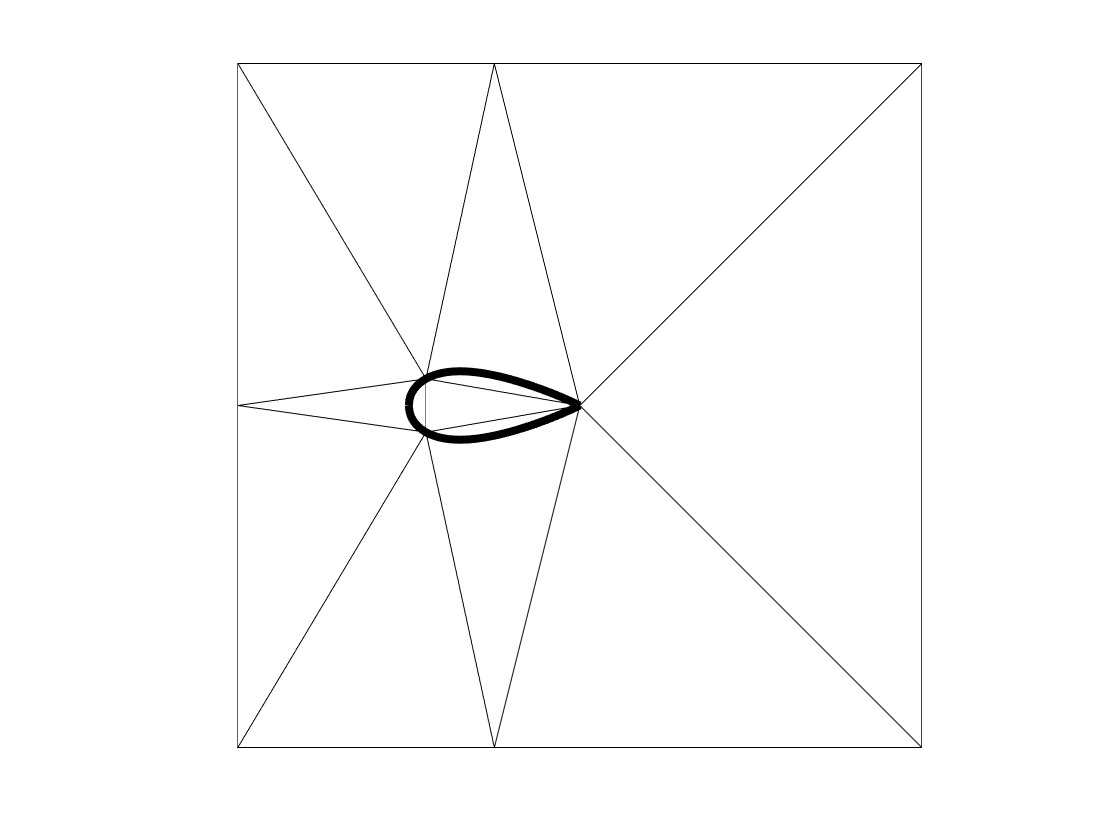}
}
\caption{isolated profile in rectangular domain.
(a) domain $\Omega$.
(b) inadmissible coarse-grained partition.
(c) admissible coarse-grained partition.
}
\label{fig:inadmissible_mesh}
\end{figure} 
 
 \subsection{Finite element displacement space on polytopes}
\label{sec:FEM}
Given the polytope $\Omega_{\rm p} \subset \mathbb{R}^2$
and  the linear triangular mesh $\mathcal{T}_{\rm p}$ defined in section \ref{sec:nonlinear_ansatz},
we  denote by 
$\texttt{I}_{\rm int} \subset \{1,\ldots,N_{\rm f}\}$ the indices of the internal facets and we define  the union of internal facets
$\mathcal{E}_{\rm p}^{\rm int} = \bigcup_{j\in \texttt{I}_{\rm int} } \texttt{F}_{j,\rm p}$.
We further introduce the 
 positive\footnote{The positive normal is chosen arbitrarily for internal facets and coincides with the outward normal to 
$\Omega_{\rm p}$ for boundary facets.} normal $\mathbf{n}^+: \mathcal{E}_{\rm p}^{\rm int}  \to \mathbb{S}^1$.
Given the field $\phi\in \mathcal{X}_{\rm hf,p}$ and  an arbitrary (possibly differential) operator $\mathfrak{L}$  (e.g.,
$\mathfrak{L}(\phi) =\phi$,
 $\mathfrak{L}(\phi) = \nabla \phi$, or 
$\mathfrak{L}(\phi) = H(\phi)$), 
 we define the limits
\begin{subequations}
\label{eq:avg_jump_operators}
\begin{equation}
\mathfrak{L}(\phi)^{\pm}(x)
=
\lim_{\epsilon\to 0^+} \mathfrak{L}(\phi)(x  \mp \epsilon \mathbf{n}^+(x)),
\quad
\forall \, x \in \mathcal{E}_{\rm p}^{\rm int},
\end{equation}
and the average and jump operators
(we omit dependence on $x$)
\begin{equation}
\left\{ \mathfrak{L}(\phi) \right\} 
=\frac{\mathfrak{L}(\phi)^+ + \mathfrak{L}(\phi)^-}{2},
\quad
\llbracket   \mathfrak{L}(\phi)   \rrbracket  
= \left( \mathfrak{L}(\phi)^+  - \mathfrak{L}(\phi)^-  \right) \cdot  \mathbf{n}^+,
\end{equation}
for all $x \in \mathcal{E}_{\rm p}^{\rm int}$.
We clarify that if $\mathfrak{L}(\phi)$ is a matrix-valued quantity the jump operator involves a matrix-vector product.
\end{subequations}

We introduce the space $\mathcal{U}$ for the displacement field (cf.  section \ref{sec:affine_maps_polytopes}):
\begin{equation}
\label{eq:mapping_space}
\mathcal{U} = 
\left\{
\varphi \in [\mathcal{X}_{\rm hf,p} ]^2 : \varphi \cdot \mathbf{n} \big|_{\partial \Omega_{\rm p}} = 0
\right\}.
\end{equation}
We observe that
$\mathcal{U}$ 
\eqref{eq:mapping_space}
  is contained in the space of Lipschitz functions in $\Omega_{\rm p}$ but it is not contained in $C^1(\Omega_{\rm p})$.

In view of the definition of the penalty function in \eqref{eq:tractable_optimization_based_registration} in section \ref{sec:penalty_discrete}, we define discrete counterparts of the $H^2$ norm and seminorm.
Since the FE space $\mathcal{X}_{\rm hf,p}$  is not $H^2$-conforming,   the discrete approximation of the $H^2$ inner product,
should involve a direct 
control of the jump of the mapping derivatives across elements.
We hence equip $\mathcal{U}$ with the inner product
\begin{equation}
\label{eq:mapping_inner_product}
\begin{array}{rl}
\displaystyle{
(w,v) =}
&
\displaystyle{\sum_{k=1}^{N_{\rm e}}
 \int_{ \texttt{D}_{k,\rm p}   } \left( H(w) : H(v) \, + \, w \cdot v \right) \, dx
}
\\[3mm]
+ &
\displaystyle{\sum_{j \in \texttt{I}_{\rm int}}  \int_{ \texttt{F}_{j,\rm p}   }
\left(
\beta_j 
\llbracket   \nabla w  \rrbracket \cdot 
\llbracket   \nabla v  \rrbracket
+ 
\frac{1}{\beta_j}
\{  H(w)  \} : \{  H(v)  \}  \right) \, ds,
}
\\
\end{array}
\end{equation}
where  
$H(w) : H(v) = \sum_{i,j,k=1}^2 \partial_{i,j} w_k \partial_{i,j} v_k$,
and
$\beta_j = \sigma_{\beta} \kappa^2 | \texttt{F}_{j,\rm p}   |^{-1}$ with 
$ \sigma_{\beta} =10$ and $j\in \texttt{I}_{\rm int}$;
we further define the induced norm
$\|\cdot \| = \sqrt{(\cdot,\cdot)}$.
The choice of the inner product is inspired by the analysis in 
\cite[section 4]{mozolevski2007hp} for the biharmonic equation: 
the authors of \cite{mozolevski2007hp} consider the norm 
$$
\begin{array}{rl}
\vertiii{w}^2 \, = &
\displaystyle{\Big(
\sum_{k=1}^{N_{\rm e}} \| \Delta w   \|_{L^2( \texttt{D}_{k,\rm p}   )}^2
\, + \,
\sum_{j \in \texttt{I}_{\rm int}   }
 \|  \sqrt{\alpha}  \llbracket   w \rrbracket   \|_{L^2( \texttt{F}_{j,\rm p}   )}^2
}
\\[3mm]
+ &
\displaystyle{
\|  \sqrt{\beta}  \llbracket  \nabla  w \rrbracket   \|_{L^2( \texttt{F}_{j,\rm p}   )}^2 +
\|  1/\sqrt{\beta} \{   \Delta  w  \}   \|_{L^2( \texttt{F}_{j,\rm p}   )}^2 +
\|  1/\sqrt{\alpha} \{  \nabla  \Delta  w  \}   \|_{L^2( \texttt{F}_{j,\rm p}   )}^2 \Big),
}
\\
\end{array}
$$
for proper choices of $\alpha$ and $\beta$.
Since we here consider continuous discretizations, we omit in \eqref{eq:mapping_inner_product} the first and fourth terms in the facet integral and we consider the same expression for $\beta$ as in \cite{mozolevski2007hp}; 
furthermore, since we wish to consider  the full $H^2$ norm, we replace $\Delta w$ with $H(w)$ in the volumetric and surface integral.

We also define the seminorm
\begin{equation}
\label{eq:penalty_term}
\begin{array}{rl}
\mathfrak{P} \left( \varphi   \right)
=
&
\displaystyle{
\sum_{k=1}^{N_{\rm e}}
 \int_{ \texttt{D}_{k,\rm p}   } \left( H( \varphi) : H( \varphi)   \right) \, dx
}
\\[3mm]
\,+\,
&
\displaystyle{
\sum_{j \in \texttt{I}_{\rm int}}  \int_{ \texttt{F}_{j,\rm p}   }
\left(
\beta_j 
\llbracket   \nabla  \varphi  \rrbracket \cdot 
\llbracket   \nabla   \varphi  \rrbracket
+ 
\frac{1}{\beta_j}
\{  H( \varphi)  \} : \{  H( \varphi)  \}  \right) \, ds.
}
\\
\end{array}
\end{equation}
  Note that  $\mathfrak{P} \left( \Phi   \right) = 0$ for any linear map $\Phi = \mathbf{b}+ \mathbf{A} x$;
in particular,  
  $\mathfrak{P} \left( \texttt{id}  \right) = 0$.
  We also define the penalty
  \begin{equation}
\label{eq:penalty_term_broken}
\mathfrak{P}_{\rm brkn} \left( \varphi   \right)
=
\sum_{k=1}^{N_{\rm e}}
 \int_{ \texttt{D}_{k,\rm p}   } \left( H( \varphi) : H( \varphi)   \right) \, dx,
\end{equation}
which does not penalize discontinuities of the displacement derivative at elements' interfaces.

 \subsection{Construction of the curved mesh of $\Omega$}
 \label{sec:curved_mesh}
 The   polytope $\Omega_{\rm p}$   and  the map $\Psi$ are explicitly linked to  the high-order mesh $\mathcal{T}$: 
the problem of determining $\Omega_{\rm p}$   and  the map $\Psi$ can be recast as   the problem of generating a (coarse) high-order mesh $\mathcal{T}$ of the domain of interest.
We propose to 
first  prescribe the 
polytope $\Omega_{\rm p}$,
then determine a 
linear triangulation $\mathcal{T}_{\rm p}$ of $\Omega_{\rm p}$ and  finally   compute the map $\Psi$ by solving 
a suitable optimization problem.
We introduce 
the parameterizations 
$\{\gamma_{k}:(0,1)\to \texttt{F}_k \}_{k=1}^{N_{\rm b}}$
of the curved edges
$\{ \texttt{F}_k \}_{k=1}^{N_{\rm b}}$
 and the Gauss-Lobatto points $\{t_i^{\rm gl}\}_{i=1}^{\kappa+1}\subset [0,1]$. Then, we introduce the 
reference and deformed points
$$
x_{j_{i,k}}
=
\gamma_{k}(0) (1 - t_i^{\rm gl})
+
\gamma_{k}(1) \, t_i^{\rm gl},
\quad
y_{j_{i,k}}
=
\gamma_{k}(t_i^{\rm gl}),
\quad
{\rm where} \;\; 
 j_{i,k}=i +(k-1)N_{\rm b},
$$
for  $i=1,\ldots,\kappa+1$, and
$k=1,\ldots,N_{\rm b}$, and the affine space
$\widetilde{\mathcal{W}}$ with 
\begin{equation}\label{eq:displacement_geometric_registration}
\widetilde{\mathcal{W}}
=\left\{
\texttt{id} +\varphi  \in [\mathcal{X}_{\rm hf,p} ]^2 : \varphi \cdot \mathbf{n} \big|_{\partial \Omega_{\rm p} \cap \partial \Omega} = 0
\right\};
\end{equation}
the set $\partial \Omega_{\rm p} \cap \partial \Omega$ 
corresponds to the portion of 
$\partial \Omega$ that is the union of linear facets and that is hence shared with 
$\partial \Omega_{\rm p}$ (see Hypothesis \ref{hyp:regular_polytopes}).
Note  that the affine space $\widetilde{\mathcal{W}}$ enables deformation of curved facets.

 In conclusion, we consider the
constrained optimization statement:
\begin{equation}
\label{eq:optimization_georeg}
\min_{\Psi \in    \widetilde{\mathcal{W}} } \;\;
\mathfrak{f}_{\rm jac}(\Psi)
\, + \,
 \mathfrak{P}_{\rm brkn} \left( \Psi   \right)
\quad
{\rm s.t} \;\; \|  \Psi(x_j) - y_j \|_{\infty} \leq \delta, \;\; j=1,\ldots,N=(\kappa+1) N_{\rm b},
\end{equation}
where $\mathfrak{f}_{\rm jac}$  is introduced in \eqref{eq:f_jac_tmp} and is 
designed to enforce the bijectivity of the mapping,  while 
$\mathfrak{P}_{\rm brkn} $ is the quadratic penalty term \eqref{eq:penalty_term_broken}. The penalty \eqref{eq:penalty_term_broken} was found superior to 
 \eqref{eq:penalty_term} 
 ---
 that is,  it led to more regular meshes and to more rapid convergence of the optimizer ---
 for the model problem of section \ref{sec:numerics}.

The optimization statement 
\eqref{eq:optimization_georeg} reads as a  nonlinear non-convex optimization problem with linear inequality constraints; in our numerical implementation, we resort to the Matlab function \texttt{fmincon} to solve \eqref{eq:optimization_georeg} based on an interior-point method.
 In the numerical experiments, we consider $\delta=10^{-6}$. 

\subsection{Penalty function}
\label{sec:penalty_discrete}
We denote by $\mathcal{T}_{\rm pb}$ the mesh used for HF calculations:
  $\mathcal{T}_{\rm pb}$ is  independent of the FE mesh 
$\mathcal{T}_{\rm p}$
used in sections
 \ref{sec:nonlinear_ansatz} and \ref{sec:FEM} 
to define the mapping space; it  might also be associated to a different   (spectral element, finite volume) discretization method.
For model reduction tasks, the registration procedure should ensure that the 
deformed mesh $\Phi(\mathcal{T}_{\rm pb})$  is well-behaved (see, e.g.,  \cite{barral2023registration}).
The penalty $\mathfrak{f}_{\rm pen}^{\rm th}$   \eqref{eq:f_pen_theory} 
is not tailored to the discrete representation of the FE fields 
 $\{ \varphi_i \}_{i=1}^M$ that are not $H^2$-conforming; furthermore, the penalty does not ensure that the deformed mesh $\Phi(\mathcal{T}_{\rm pb})$  is well-behaved. Below, we hence derive an alternative penalty function that is well-suited for model reduction tasks.
We denote by 
$\{ \texttt{D}_k^{\rm pb}  \}_{k=1}^{N_{\rm e}^{\rm pb}}$
and 
$\{ \Psi_k^{\rm pb}  \}_{k=1}^{N_{\rm e}^{\rm pb}}$
 the elements  and the
  elemental mappings 
of 
$\mathcal{T}_{\rm pb}$; 
we use notation 
$\{ \Psi_{\Phi,k}^{\rm pb}  \}_{k=1}^{N_{\rm e}^{\rm pb}  }$ to refer to the elemental mappings of the deformed mesh
(see, e.g.,
\cite[Eq. (9)]{taddei2021registration}).

We hence propose the penalty:
\begin{subequations}
\label{eq:penalty_function}
\begin{equation}
\label{eq:penalty_function_a}
\mathfrak{f}_{\rm pen}(\mathbf{a} )
=
\mathfrak{f}_{\rm jac} (  \texttt{N}_{\rm p}(\mathbf{a} )    )
+
\mathfrak{f}_{\rm msh} (\texttt{N}(\mathbf{a} )   )
+
\mathfrak{P} \left(  \texttt{N}_{\rm p}(\mathbf{a} )    \right)
\end{equation}
where  $\mathfrak{P}$ (c.f. \eqref{eq:penalty_term}) promotes smoothness, 
$\mathfrak{f}_{\rm jac}$ (cf. \eqref{eq:f_jac_tmp})  enforces  local bijectivity,
and the function
$\mathfrak{f}_{\rm msh}$ 
controls the quality of the deformed mesh,
\begin{equation}
\label{eq:f_msh}
\mathfrak{f}_{\rm msh}({\Phi}   )
=
\frac{1}{|\Omega|}
\sum_{k=1}^{N_{\rm e}^{\rm pb}  } \; \; 
|  \texttt{D}_k^{\rm pb}  |
\int_{  \widehat{\texttt{D}}  } 
{\rm exp} 
\left(
\frac{q_{k}^{\rm msh}(\Phi)}{q_{k}^{\rm msh}(\texttt{id})}
\, - \, 
\kappa_{\rm msh}
\right)
 \; dx,
\end{equation}
where
\begin{equation}
\label{eq:f_msh_b}
 q_{k}^{\rm msh}(\Phi) : = 
 \frac{1}{d^2}
\left(
\frac{ \|  \nabla \Psi_{\Phi,k}^{\rm pb} \|_{\rm F}^2  }{( {\rm det} ( \nabla  \Psi_{\Phi,k}^{\rm pb}   )    )_+^{2/d}}\right)^2,
\quad
d=2.
\end{equation}
We observe that the ratio
$ q_{k}^{\rm msh}$ measures the degree of anisotropy of the mesh and it can thus be interpreted as a measure of the quality of the deformed mesh ---
we recall that the indicator
$ q_{k}^{\rm msh}$  was  used in \cite{zahr2020implicit} in the framework of DG methods.
As discussed in \cite{taddei2022optimization}, 
the decision to activate 
the penalty terms
$\mathfrak{f}_{\rm jac}$ and/or 
$\mathfrak{f}_{\rm msh}$ depends on the particular way we treat parameterized geometries --- map-then-discretize or discretize-then-map: in the numerical experiments, we activate only $\mathfrak{f}_{\rm msh}$ with $\kappa_{\rm msh}=10$.
\end{subequations}
In \eqref{eq:penalty_function}, we employ an $L^2$ penalization of the Hessian rather than an $L^{\infty}$ penalization as in \eqref{eq:f_pen_theory}: this choice simplifies computations and is justified by the fact that the ansatz $\texttt{N}_{\rm p}$ consists of a finite expansion  of modes.

\subsection{Target function}
\label{sec:target_function}
The target $\mathfrak{f}_{\mu}^{\rm tg}$ measures the degree of similarity between the available estimate of the solution field $u_{\mu}$ to the problem of interest and a suitable template solution 
 or template reduced space; 
$\mathfrak{f}_{\mu}^{\rm tg}$ relies on the introduction of a sensor $s_{\mu}:=\mathfrak{s}_{\mu}(u_{\mu})$ which should highlight the coherent structures we wish to track. We might distinguish between \emph{point-set sensors} and \emph{distributed sensors}.
The choice  of the sensor   might reflect the connection between distributed sensors and the shock sensors that are used to activate artificial viscosities in high-order discretization \cite{nicoud1999subgrid,persson2006sub}.
We anticipate that in the numerical experiments we 
combine  point-set and distributed sensors
to take into account different sources of information and  to robustify the greedy procedure outlined in section \ref{sec:parametric_map}
(cf. \eqref{eq:target_blade}).

\subsubsection*{Target function based on point-set sensors}
Point-set sensors are based on the introduction of a scalar testing function  that selects the   points of the mesh $\{ x_{\mu,j}^{\star} \}_{j=1}^{N_{\mu}^{\star}} $ where the solution $u_{\mu}$ satisfies a suitable user-defined criterion (cf. \cite{iollo2022mapping}),
for any $\mu\in \mathcal{P}$.
Given the template point cloud 
$\{ \bar{x}_{j}^{\star} \}_{j=1}^{\overline{N}^{\star}}$ --- which can be prescribed \emph{a priori} or be  chosen based on one specific value of the parameters
--- and the parameter  value $\mu\in \mathcal{P}$,
we first rely on a standard point-set registration (PSR) procedure
that takes as inputs the point clouds
$\{ \bar{x}_{j}^{\star} \}_{j=1}^{\overline{N}^{\star}}$ and
$\{ x_{\mu,j}^{\star} \}_{j=1}^{N_{\mu}^{\star}}$,
to determine the deformed point cloud
$\{ \widehat{x}_{j,\mu}^{\star} \}_{j=1}^{\overline{N}^{\star}}$  that  approximates (in a suitable sense)  the target  cloud 
$\{ x_{\mu,j}^{\star} \}_{j=1}^{N_{\mu}^{\star}}$
(see, e.g., \cite{myronenko2010point} for further details);
then, we solve the optimization problem \eqref{eq:tractable_optimization_based_registration} with 
\begin{equation}
\label{eq:target_pointset}
\mathfrak{f}_{\mu}^{\rm tg}
\left(
\Phi
\right)
=
\frac{1}{  \overline{N}^{\star}  }
\sum_{i=1}^{ \overline{N}^{\star}   }
\|
\Phi(  \bar{x}_{i}^{\star}   )
-
\widehat{x}_{i,\mu}^{\star} 
\|_2^2.
\end{equation}
As discussed in \cite{iollo2022mapping} and also
\cite{taddei2022optimization},
we can interpret the solution to \eqref{eq:tractable_optimization_based_registration} with objective \eqref{eq:penalty_function}-\eqref{eq:target_pointset}  as an approximate projection of the mapping returned by the PSR procedure
--- which is neither guaranteed to map the  boundary of the domain $\Omega$ in itself nor to be globally  bijective  --- 
 onto the space of admissible bijections in $\Omega$.

In order to speed up calculations, 
it is important to 
exploit the structure of the mapping in \eqref{eq:nonlinear_ansatz}
and 
precompute 
$ \Psi^{-1}$ where needed before calling the optimizer. 
In more detail, in order to speed up the evaluation of 
$\mathfrak{f}_{\mu}^{\rm tg}$ in \eqref{eq:target_pointset},
we first compute and store
$\bar{z}_j^{\star} = \Psi^{-1}(\bar{x}_j^{\star})$ for $j=1,\ldots, \overline{N}^{\star}$ and then we evaluate \eqref{eq:target_pointset} as 
$$
\mathfrak{f}_{\mu}^{\rm tg}
\left(
\Phi
\right)
=
\frac{1}{  \overline{N}^{\star}  }
\sum_{i=1}^{ \overline{N}^{\star}   }
\|
\Psi \left( \Phi_{\rm p}(\bar{z}_{i}^{\star} ) \right)
-
\widehat{x}_{i,\mu}^{\star} 
\|_2^2.
$$ 
Similar reasoning can be exploited for the evaluation of  \eqref{eq:f_msh}.

\subsubsection*{Target function based on distributed sensors}
 A distributed sensor is a function of the solution field $u_{\mu}$ that highlights the features of $u_{\mu}$ that we wish to track.
 Given the reduced $n$-dimensional space $\mathcal{Z}_n$ embedded in a Hilbert space $\mathcal{X}$ defined over $\Omega$,  recalling \eqref{eq:Lagrangian_approximations}, we find that  the parametric mapping $\Phi$ should minimize the target
\begin{equation}
\label{eq:dream_target}
 \mathfrak{f}_{\mu}^{\rm tg,opt}
\left(
\Phi
\right)
:=
\min_{\zeta\in \mathcal{Z}_n}
\int_{\Omega}
\|
u_{\mu} \circ \Phi
-\zeta
\|_2^2 \, dx,
 \end{equation}
 over all values of $\mu$ in $\mathcal{P}$:
 $ \mathfrak{f}_{\mu}^{\rm tg,opt}
\left(
\Phi
\right)$ is the projection error in the mapped configuration. Note that
 $\mathfrak{f}_{\mu}^{\rm tg,opt}$  depends on the choice of the reduced space
 $\mathcal{Z}_n$ whose selection is inherently coupled with the problem of finding $\Phi$: we postpone the procedure for the construction of  the reduced space to the next section.

The explicit use of the solution $u_{\mu}$ in the optimization statement \eqref{eq:tractable_optimization_based_registration}
 is  computationally unfeasible and prone to instabilities: 
 first, the solution $u_{\mu}$ is typically defined over an unstructured grid for which function evaluation at 
 the deformed quadrature points $\{  \Phi(x_q^{\rm qd}) \}_{q=1}^{N_{\rm qd}}$ is extremely expensive; second, we might exploit prior knowledge about the problem of interest to identify a scalar function of $u_{\mu}$ that better isolates 
 the features we wish to track (e.g., shocks) using registration,
  from 
  the features we expect to be able to approximate through a linear expansion of (mapped) snapshots.

Exploiting the form of $\Phi$, and the change-of-variable  $x= \Psi(\xi)$, we find
 $$
 \int_{\Omega}
\|
u_{\mu} \circ \Phi
-\zeta
\|_2^2 \, dx
=
 \int_{\Omega_{\rm p}}
\|
u_{\mu} \circ \Psi \circ \Phi_{\rm p}
-\zeta\circ \Psi^{-1}
\|_2^2 \, J(\Psi) d \xi.
$$
If we replace $u_{\mu} \circ \Psi $ with a scalar sensor $s_{\mu}$ defined over the domain $\Omega_{\rm p}$ and the reduced space $\mathcal{Z}_n$ for the solution with a  space (dubbed \emph{template space}) $\mathcal{S}_n\subset L^2(\Omega_{\rm p})$ for the sensor, we obtain 
\begin{equation}
 \label{eq:distributed_target}
 \mathfrak{f}_{\mu}^{\rm tg}
\left(
\Phi
\right)
:=
\min_{\nu \in \mathcal{S}_n}
\int_{\Omega_{\rm p}}
\big| 
s_{\mu} \circ \Phi_{\rm p}
-\nu
\big|^2  J(\Psi)  \, dx,
 \end{equation}
  which is the target function employed in the numerical experiments.

From
the definition \eqref{eq:distributed_target}, we deduce that
 computation of $ \mathfrak{f}_{\mu}^{\rm tg}(\Phi)$ requires to evaluate 
 $s_{\mu}$ in the deformed quadrature points
 $\{\Phi_{\rm p}(x_q^{\rm qd,p}) \}_q$ of the mesh $\mathcal{T}_{\rm p}$ at each iteration of the optimization algorithm for \eqref{eq:tractable_optimization_based_registration}.
Since the solution is discontinuous, we consider a P1 discretization of the sensor $s_{\mu}$ over a linear mesh $\mathcal{T}_{\rm p,s}$ of $\Omega_{\rm p}$, which is generated independently of $\mathcal{T}_{\rm p}$.
Following  
 \cite{luke2012fast}, we rely on  KD-trees
to speed up mesh interpolation 
 (cf. Matlab function \texttt{KDTreeSearcher}): since $\mathcal{T}_{\rm p}$ is a linear mesh with a modest number of elements, the evaluation of the sensor
 $s_{\mu}$ in the deformed quadrature points is still affordable. Notice that the same fast mesh interpolation routine should also be used to evaluate the FE mapping $\Psi$ in \eqref{eq:f_msh}.
 
\subsection{Construction of the parametric map}
\label{sec:parametric_map}
The target function \eqref{eq:distributed_target} depends on the template space $\mathcal{S}_n\subset L^2(\Omega_{\rm p})$. 
Following \cite{taddei2021registration,taddei2021space}, 
given a set of sensor snapshots
$\{s_{\mu} :\mu\in \mathcal{P}_{\rm train}\}$ with $\mathcal{P}_{\rm train}=
\{\mu^k\}_{k=1}^{n_{\rm train}}$, 
we propose  an iterative procedure that  
performs registration over the entire training set and then exploits the results to
 update  the template space $\mathcal{S}_n$ in a greedy fashion.
 Algorithm \ref{alg:registration} summarizes the computational procedure.
 
\begin{algorithm}[H]                      
\caption{Registration algorithm}     
\label{alg:registration}     

 \small
\begin{flushleft}
\emph{Inputs:}  $\{  s_{\mu} : \mu\in 
\mathcal{P}_{\rm train} \}$ snapshot set, 
$\mathcal{S}_{n_0} = {\rm span} \{ 
 s_{\mu^{\star,(i)}} 
\}_{i=1}^{n_0}$ initial template space;
$\mathcal{T}_{\rm pb}$ mesh for HF computations.
\smallskip

\emph{Outputs:} 
${\mathcal{S}}_n $ template space, 
$\mathbf{W} \in \mathbf{R}^{M\times m}$ mapping space,
$\{    \mathbf{a}_{\mu^k}^{\star}  \}_k \subset \mathbb{R}^m$ optimal  mapping coefficients.
\end{flushleft}                      

 \normalsize 

\begin{algorithmic}[1]
\State
Initialization: 
$\mathcal{S}_{n=n_0} = \mathcal{S}_{n_0}$,
$\Xi_{\star} = \{\mu^{\star,(i)} \}_{i=1}^{n_0}$,
$\mathbf{W} =\mathbbm{1}_M$.
\vspace{3pt}

\For {$n=n_0, \ldots, n_{\rm max}-1$ }

\State
$
\left[\mathbf{a}_{\mu}^{\star}, 
\mathfrak{f}_{\mu}^{\star}  \right]
\, = \,
\texttt{registration} \left(
s_{\mu}, \,   \mathcal{S}_n, \,  \mathbf{W}, \, \mathcal{T}_{\rm pb}, \, 
\Psi, 
\mathbf{a}_{\mu}^{0}
\right)
$ for all $\mu \in \mathcal{P}_{\rm train}$

\hfill
\emph{see Remark \ref{remark:practical_implementation} for definition of $\mathbf{a}_{\mu}^{0}$}
\vspace{3pt}

\State
$[\mathbf{W}, \; \{  
\mathbf{a}_{\mu}^{\rm proj}
  \}_{\mu\in \mathcal{P}_{\rm train}} ]  =
\texttt{POD} \left( 
\{  \mathbf{a}_{\mu}^{\star}  \}_{\mu\in \mathcal{P}_{\rm train}}, 
tol_{\rm pod}  ,
(\cdot, \cdot)_2
 \right),$
\vspace{3pt}

\If{$\max_{\mu\in \mathcal{P}_{\rm train}}  \mathfrak{f}_{\mu}^{\star}   < \texttt{tol}$}
\State
\texttt{break}

\Else
\State
$\Xi_{\star} = 
\Xi_{\star} \cup 
\{ \mu^{\star,(n+1)} \}$
with $\mu^{\star, (n+1)}= {\rm arg} 
\max_{\mu\in \mathcal{P}_{\rm train}}  
\mathfrak{f}_{\mu}^{\star}$.
\vspace{3pt}

\State
$\mathcal{S}_{n+1} =
 {\rm span}  \{ s_{\mu^{\star,(i)}}  \circ \texttt{N} _{\rm p}( 
\mathbf{a}_{\mu^{\star,(i)}}^{\rm proj}) \}_{i=1}^{n+1}$.
 \vspace{3pt}
 
\EndIf
\EndFor
\end{algorithmic}
\end{algorithm}

 Given the orthonormal basis  $\{ \varphi_i \}_i$ of $\mathcal{U}$ and the operator $\texttt{N}$, we define the orthogonal 
matrix $\mathbf{W}\in \mathbb{R}^{M\times m}$ and the low-rank map $\widehat{\texttt{N}}(\mathbf{a}): = \texttt{N} ( \mathbf{W} \mathbf{a}  )$; by construction, we have
 $\| \mathbf{a}  \|_2 = \|  \sum_i  ( \mathbf{W}   \mathbf{a} ) \varphi_i \|$ for any $\mathbf{a}\in \mathbb{R}^m$. Then, we introduce notation:
 $$
\left[\mathbf{a}_{\mu}^{\star}, 
\mathfrak{f}_{\mu}^{\star}  \right]
\, = \,
\texttt{registration} \left(
s_{\mu}, \,   \mathcal{S}_n, \,  \mathbf{W}, \, \mathcal{T}_{\rm pb}, \, 
\Psi, \, 
\mathbf{a}_{\mu}^{0}
\right)
$$
to refer to the function that takes as inputs 
(i) the target sensor $s_{\mu}:\Omega_{\rm p}\to \mathbb{R}$, 
(ii) the template space $\mathcal{S}_n$,
(iii) the  orthogonal matrix   $\mathbf{W}$, 
(iv) the mesh $\mathcal{T}_{\rm pb}$, 
(v) the geometric mapping $\Psi:\Omega_{\rm p}\to \Omega$ and 
(vi) the initial guess  $ \mathbf{a}_{\mu}^{0} \in \mathbb{R}^m$
for the optimizer,
and returns 
(I) the mapping coefficients $\mathbf{a}_{\mu}^{\star}$ that minimize
the (reduced) objective 
$\mathbf{a} \mapsto \mathfrak{f}_{\mu}^{\rm obj}(\mathbf{W} \mathbf{a}_{\mu}^{\star})$, and 
(II) the value of the target function
$\mathfrak{f}_{\mu}^{\star}  = 
\mathfrak{f}_{\mu}^{\rm tg}(\mathbf{W} \mathbf{a})
$.
Note that the objective  $\mathfrak{f}_{\mu}^{\rm obj}: \mathbb{R}^M \to \mathbb{R}_+$  of
\eqref{eq:tractable_optimization_based_registration} 
   depends on the mesh  $\mathcal{T}_{\rm pb}$  through  the term 
$\mathfrak{f}_{\rm msh}$ in \eqref{eq:f_msh}.
We also introduce the function
$$
[ \mathbf{W}_{\rm new},  \; \{  
\mathbf{a}_{\mu}^{\rm proj}
  \}_{\mu\in \mathcal{P}_{\rm train}} ]  =
\texttt{POD} \left( 
\{  \mathbf{W}_{\rm  old} \mathbf{a}_{\mu}^{\star}  \}_{\mu\in \mathcal{P}_{\rm train}}, 
tol_{\rm pod}  ,
(\cdot, \cdot)_2
 \right),
$$
which implements POD
based on the method of snapshots with Euclidean inner product $(\cdot,\cdot)_2$:
the tolerance 
$tol_{\rm pod}>0$ drives 
the selection of the number of modes $m$ based on the energy criterion 
\begin{equation}
\label{eq:POD_cardinality_selection}
m := \min \left\{
m': \, \sum_{j=1}^{m'} \lambda_{j} \geq  \left(1 - tol_{\rm pod} \right) 
\sum_{i=1}^{n_{\rm train}} \lambda_i
\right\},
\end{equation} 
where $ \lambda_1\geq \ldots \geq \lambda_{n_{\rm train}}\geq 0$ are the eigenvalues of the Gramian matrix $\mathbf{C}\in \mathbb{R}^{n_{\rm train}\times n_{\rm train}}$ such that
$(\mathbf{C})_{k,k'}
=
\mathbf{a}_{\mu^k}^{\star} \cdot 
\mathbf{a}_{\mu^{k'}}^{\star} 
$. 
The function \texttt{POD} returns also the mapping coefficients associated with the 
projected displacements
$ \mathbf{a}_{\mu}^{\rm proj} =\mathbf{W}_{\rm new}^\top  
\mathbf{W}_{\rm old}  \mathbf{a}_{\mu}^{\star}$;
the latter  are used to initialize the iterative method for the optimization problem for the subsequent iterations.

\begin{remark}
\label{remark:practical_implementation}
\textbf{Further implementation details.}
Since the optimization problem is highly non-convex, the choice of the initial condition is extremely important to avoid convergence to unsatisfactory local minima. 
For $n=n_0+1,n_0+2,\ldots,n_{\rm max}-1$, we simply use 
$\mathbf{a}_{\mu}^{0}=\mathbf{a}_{\mu}^{\rm proj}$ (cf. Line 4).  On the other hand, for the first iteration, we first reorder the parameters in $\mathcal{P}_{\rm train}$ so that
$\mu^{(1)}
=
{\rm arg}\min_{\mu \in \mathcal{P}_{\rm train}}
\| \mu - \mu^{\star, (1)}  \|_2$ and
$$
\mu^{(k)} = {\rm arg} \min_{\mu \in \mathcal{P}_{\rm train} 
\setminus \{  \mu^{(i)} \}_{i=1}^{k-1}
 }
 \left(
 \min_{\mu' \in   \{  \mu^{(i)} \}_{i=1}^{k-1}}
 \| \mu - \mu'  \|_2
 \right),
 \quad
k=2,\ldots,n_{\rm train};
$$
then, we choose the initial condition as follows:
$$
\mathbf{a}_{\mu^{(1)}}^0 = 0,
\quad
\mathbf{a}_{\mu^{(k)}}^0
=
 {\mathbf{a}}_{\mu^{({\rm ne}_k)}}^\star
\;\;
{\rm with} \;\;
{\rm ne}_k
={\rm arg}\, \min_{j=1,\ldots,k-1} 
 \| \mu^{(j)} - \mu^{(k)} \|_2,
$$
for $k=2,\ldots,n_{\rm train}$.
In a previous implementation of the procedure, we also included box constraints in the optimization statement 
(cf. \cite[section 3.1.2]{taddei2020registration}) to control  the sensitivity of the mapping coefficients to parameter variations;
\begin{equation}
\label{eq:box_constraints}
\|
\mathbf{a}_{\mu^{(k)}}^{\star}
-
\mathbf{a}_{\mu^{(k)}}^0
\|_{\infty}
\leq
C_{\infty} \| \mu^{(k)} -  \mu^{({\rm ne}_k)}  \|_2,
\quad
{\rm with} \; \;\; C_{\infty}=10;
\end{equation}
however, in the numerical experiments of the present work, we found that the solution to the unconstrained problem satisfied the constraints   \eqref{eq:box_constraints} for all the experiments considered.
We further observe that
Algorithm \ref{alg:registration}
 depends on several hyper-parameters. In our tests, we set  
 $\mathcal{S}_{n_0=1} = {\rm span}\{ s_{\bar{\mu}}  \}$,
 where $\bar{\mu}$ is the centroid of $\mathcal{P}$; furthermore, we set $n_{\rm max}=6$,  $tol_{\rm pod}=5\cdot 10^{-3}$ and  
$\texttt{tol}=10^{-4}$.
\end{remark}

\begin{remark}
\label{remark:generalization}
\textbf{Generalization.}
Given the dataset $\{ (\mu^k, \mathbf{a}_{\mu^k}^{\star})  \}_{k=1}^{n_{\rm train}}$ as provided by Algorithm \ref{alg:registration}, 
we resort to a multi-target regression algorithm to learn a regressor $\mu \mapsto \widehat{\mathbf{a}}_{\mu}$
for the mapping coefficients,
 and ultimately define the  parametric mapping
\begin{equation}
\label{eq:parametric_mapping_Phi}
 {\Phi} : \Omega \times \mathcal{P}  \to \Omega,
\quad
 {\Phi}_{\mu} : =
 \texttt{N} \left( \mathbf{W}    \widehat{\mathbf{a}}_{\mu} \right).
\end{equation}
We here resort to radial basis function (RBF, \cite{wendland2004scattered}) approximation: other regression algorithms could also be considered. 
Similarly to \cite{taddei2020registration,taddei2021space},
to  avoid overfitting, we verify the statistical significance of the RBF estimators. We randomly split the dataset 
$\{ (\mu^k, \mathbf{a}_{\mu^k}^{\star})  \}_{k=1}^{n_{\rm train}}$ into the learning and test sets
$\{ (\mu^k, \mathbf{a}_{\mu^k}^{\star})  \}_{k\in D_{\rm learn}}$ and
$\{ (\mu^j, \mathbf{a}_{\mu^j}^{\star})  \}_{j\in D_{\rm test}}$
with $D_{\rm learn} \cap D_{\rm test} = \emptyset$ and
$D_{\rm learn} \cup D_{\rm test} =
\{1, \ldots, n_{\rm train} \}$
(we here consider a $80\%$-$20\%$ learning/test split);
we compute the RBF approximation 
$\widehat{\mathbf{a}}: \mathcal{P} \to \mathbb{R}^m$ based on the learning set and we compute the out-of-sample R-squared coefficient for each component:
\begin{equation}
\label{eq:Rsquared}
\texttt{R}_i^2 =  
1 -
\frac{
\sum_{j\in D_{\rm test} }
\left(  
\mathbf{a}_{\mu^j}^{\star}
-    \widehat{\mathbf{a}}_{\mu^j}   \right)_i^2}{
\sum_{j\in D_{\rm test}}
\left(  
\mathbf{a}_{\mu^j}^{\star}
-    \bar{\mathbf{a}}  \right)_i^2
}
,
\quad
\bar{\mathbf{a}}  = \frac{1}{|D_{\rm learn}|} \sum_{k\in D_{\rm learn}} \,   \mathbf{a}_{\mu^k}^{\star},
\quad
i=1,\ldots,m.
\end{equation}
Then, we retain exclusively modes for which $\texttt{R}_i^2$ is above the threshold $R_{\rm min}=0.70$. 
\end{remark}

\begin{remark}
\label{remark:pointsetregistration}
\textbf{Parametric registration based on point-set sensors.}
The greedy procedure in Algorithm \ref{alg:registration} is motivated by the need to construct the template space $\mathcal{S}_n$. If we rely on point-set sensors, we do not have to perform multiple iterations. However, we empirically found that performing two iterations of the for loop in Algorithm \ref{alg:registration} does not hinder  computational efficiency
--- since the cost is dominated by the high-dimensional registration problems solved during the first iteration --- and has a beneficial effect on generalization outside the training set.
\end{remark}

\section{Application to an inviscid flow past an array of LS89 turbine blades}
\label{sec:numerics}
We consider the problem of estimating the solution to the two-dimensional Euler equations past an array of LS89 turbine blades. 

\subsection{Model problem}
We consider the computational domain depicted in Figure 
\ref{fig:meshes}(a); we prescribe total temperature, total pressure and flow direction at the inflow, static pressure at the outflow, non-penetration condition on the blade and periodic boundary conditions on the lower and upper  boundaries.
We study the sensitivity of the solution with respect to two parameters: the free-stream Mach number ${\rm Ma}_{\infty}$ and the height of the channel $H$,  i.e.~ $\mu=[ H, {\rm Ma}_{\infty}]$. We consider the parameter domain 
$\mathcal{P}=[0.95,1.05]\times [0.9,0.95]$. 
We refer to 
\cite{barral2023registration} for a detailed presentation of the 
employed 
nondimensionalization, HF  DG  formulation and  pseudo-transient  continuation strategy.

Figure \ref{fig:vis_model_problem} shows the distribution of the Mach field for $\mu_{\rm min}=[0.95,0.9]$ and 
$\mu_{\rm max}=[1.05,0.95]$; 
Figure \ref{fig:vis_model_problem_slices}(a) shows the behavior of the Mach number on the upper side of the blade for four parameter values, while 
Figure \ref{fig:vis_model_problem_slices}(b) shows the behavior of the entropy profile ${\rm E}=\log (p)  - \gamma \log (\rho)$  where $p$ is the pressure field, $\gamma=1.4$ is the ratio of specific heats, and $\rho$ is the density field.
The solution develops a normal shock on the upper side of the blade for sufficiently large values of ${\rm Ma}_{\infty}$ and $H$; furthermore, the entropy ${\rm E}$ exhibits several peaks that correspond to the blade wakes.
The solution    develops two shocks at the trailing edge, which are highly undesirable for turbomachinery applications:
we expect that at the trailing edge viscous effects might not be negligible; for this reason, 
a more thorough investigation should rely on a    model that  accounts for viscous effects.
We observe that the shock location and the entropy peaks are sensitive to the value of the parameter: this justifies the application of registration procedures.

\begin{figure}[h!]
\centering
\subfloat[]{ 
\includegraphics[width=.47\textwidth]{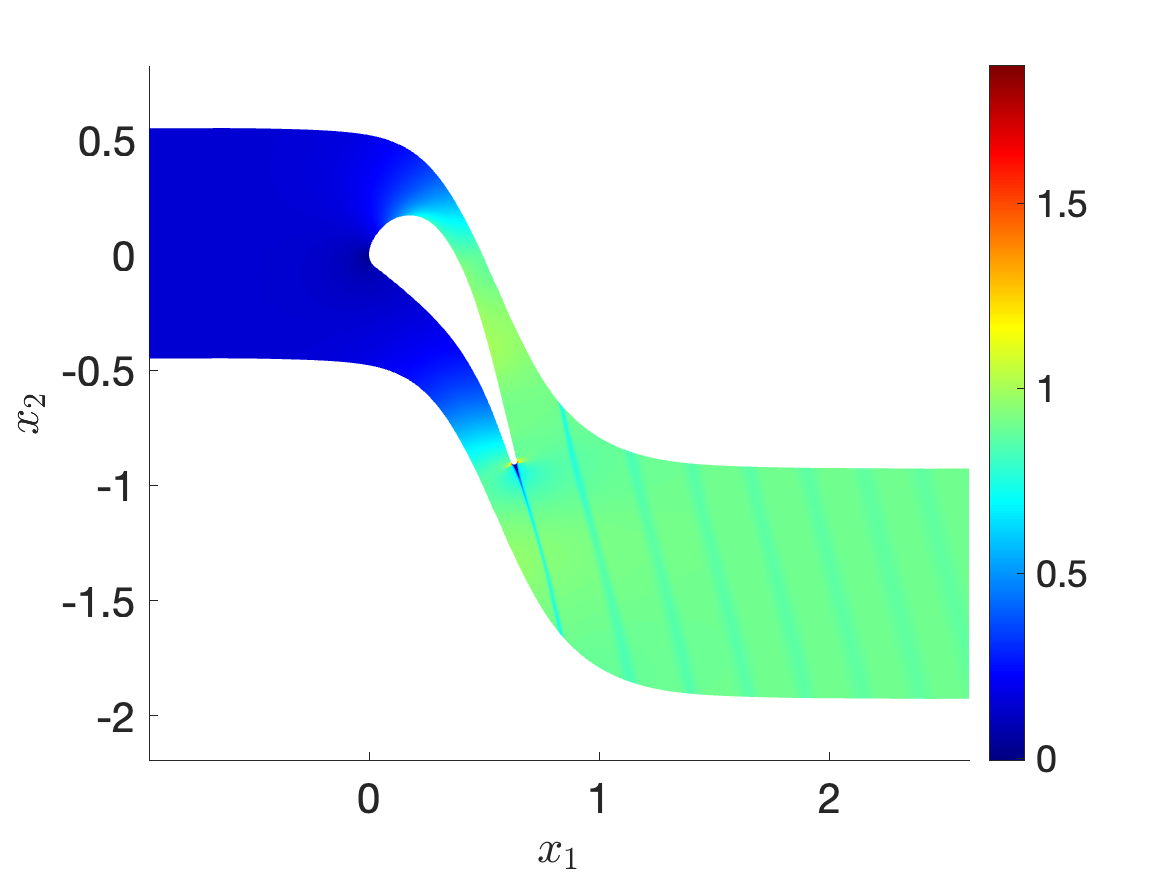}
}
~~
\subfloat[]{
\includegraphics[width=.47\textwidth]{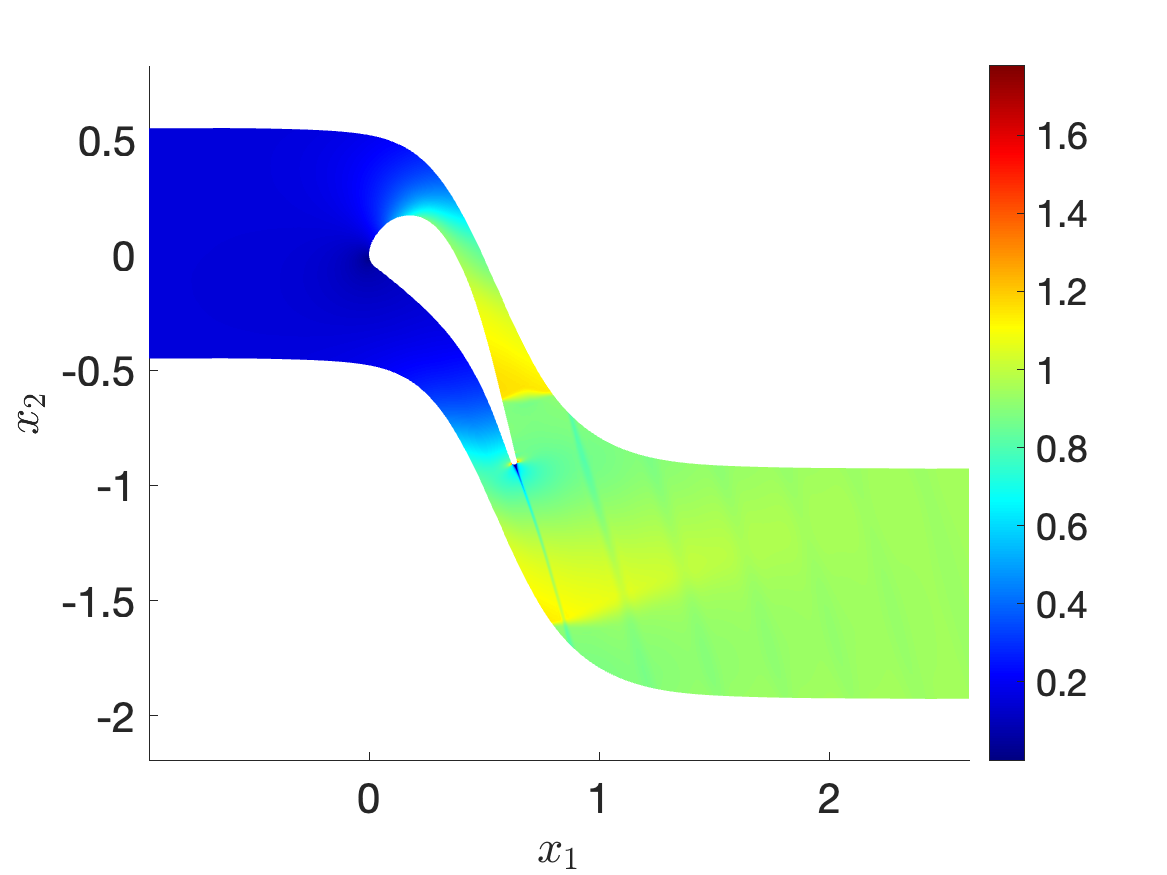}
}


\caption{inviscid flow past an array of LS89 turbine blades.
(a)-(b)   Mach field for $\mu=[0.95,0.9]$ and $\mu=[1.05,0.95]$.
}
\label{fig:vis_model_problem}
\end{figure} 

\begin{figure}[h!]
\centering
\subfloat[]{ 
\includegraphics[width=.47\textwidth]{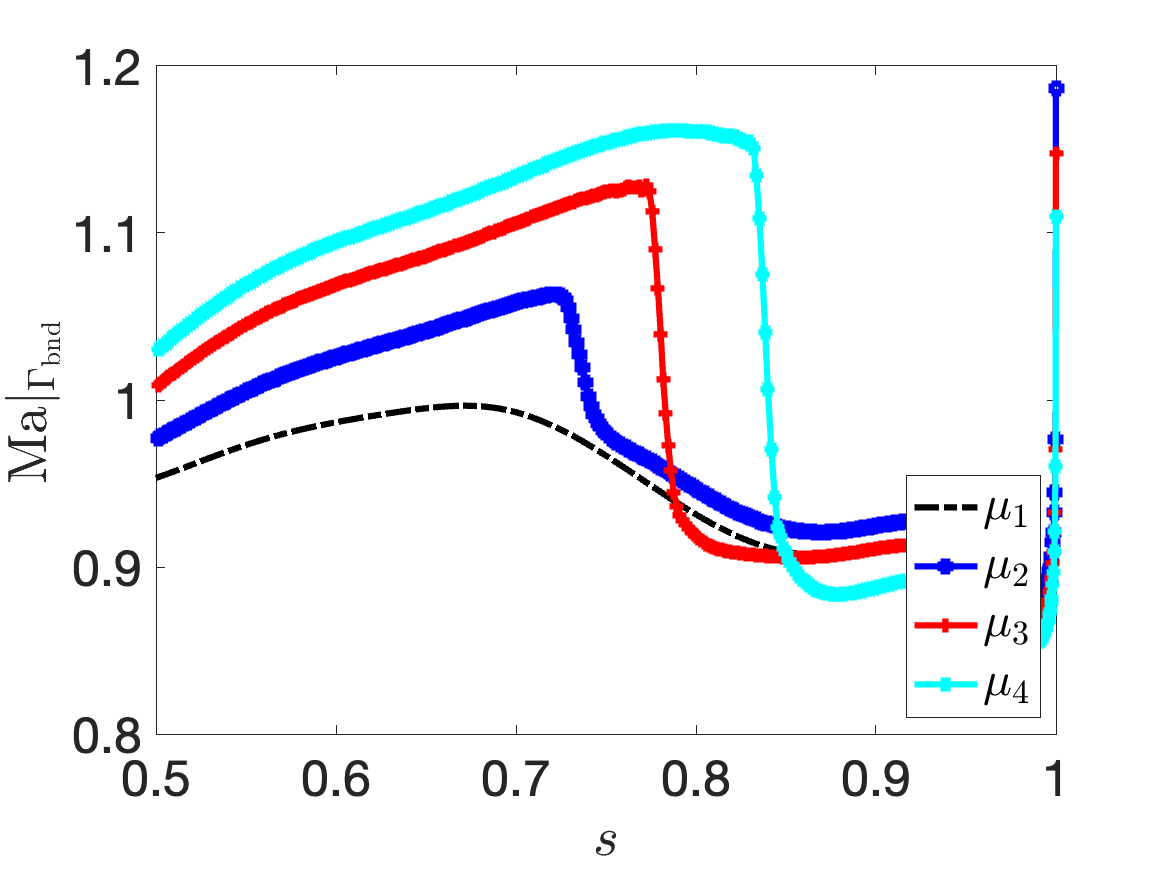}
}
~~
\subfloat[]{
\includegraphics[width=.47\textwidth]{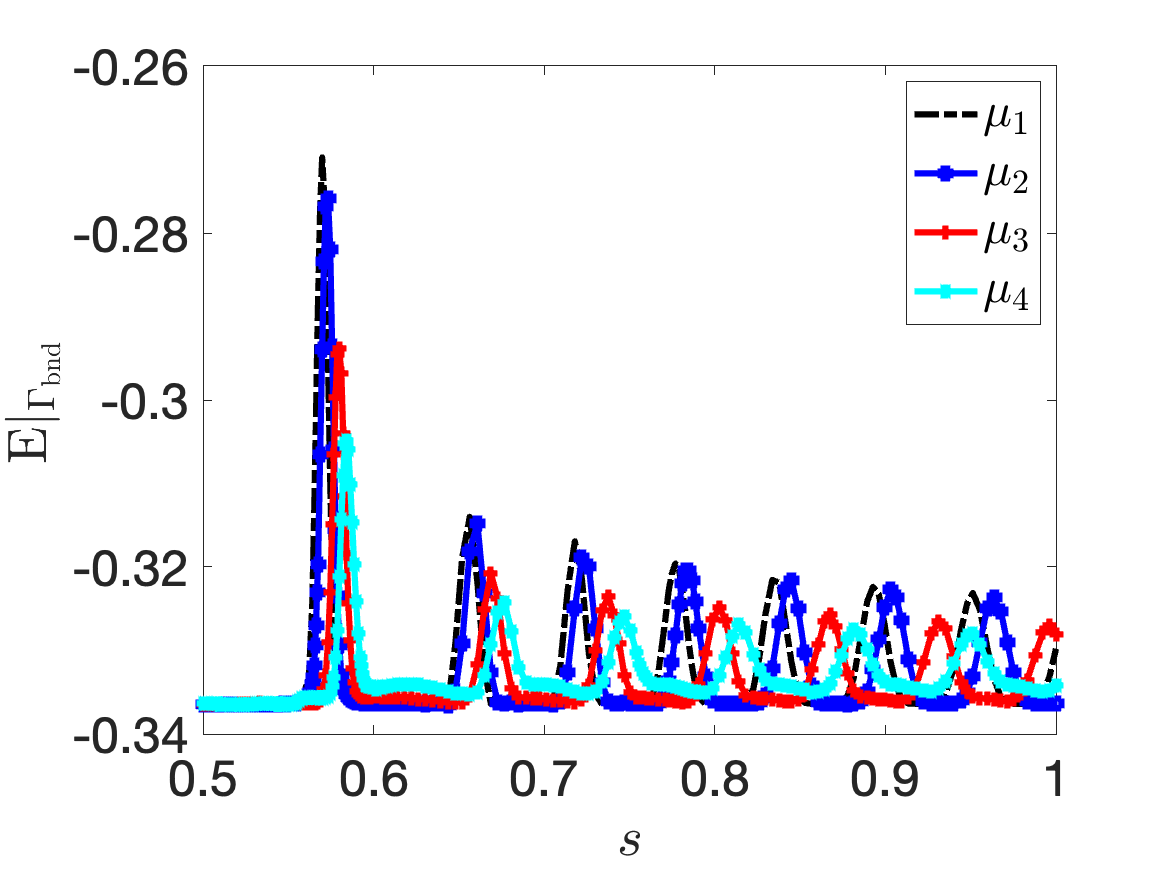}
}

\caption{inviscid flow past an array of LS89 turbine blades.
(a)  Mach  profile over the upper side of the blade for four values of the parameter in $\mathcal{P}$.
(b)  entropy  profile on the bottom boundary   for four values of the parameter in $\mathcal{P}$.
The parameter $s$ denotes the normalized curvilinear coordinate,  $s\in [0,1]$.
}
\label{fig:vis_model_problem_slices}
\end{figure} 

\subsection{Definition of the mapping ansatz}
We deal with geometry variations through a piecewise-smooth mapping associated with the partition in 
Figure \ref{fig:meshes}(b). 
We set $H_{\rm ref}=1$ and we define the curve $x_1\mapsto f_{\rm btm}(x_1)$ that describes the lower boundary $\Gamma_{\rm btm}$ of the domain  $\Omega=\Omega(H=H_\mathrm{ref})$; then, we define $\widetilde{H}>0$ such that 
$x_1\mapsto f_{\rm btm}(x_1)+\widetilde{H}$ and
$x_1\mapsto f_{\rm btm}(x_1)+H-\widetilde{H}$ do not intersect the blade for any $H\in [0.95,1.05]$; finally, we define the geometric mapping
\begin{subequations}
\label{eq:geometric_mapping_turbine}
\begin{equation}
\Psi_H^{\rm geo}(x=[x_1,x_2])
=
\left[
\begin{array}{l}
x_1 \\
\psi_H^{\rm geo}(x)
\\
\end{array}
\right],
\quad
x\in \Omega,
\end{equation}
where 
\begin{equation}
\psi_H^{\rm geo}(x)
=
\left\{
\begin{array}{ll}
o_1(x_1)+ C(H) \left(x_2 - o_1(x_1) \right) & x_2 < o_1(x_1),\\
o_2(x_1)+ C(H) \left(x_2 - o_2(x_1) \right) & x_2 > o_2(x_1),\\
x_2 & {\rm otherwise},
\\
\end{array}
\right.
\end{equation}
with 
$o_1(x_1)=f_{\rm btm}(x_1)+\widetilde{H}$,
$o_2(x_1)=f_{\rm btm}(x_1)+H_{\rm ref} - \widetilde{H}$ and
$C(H) = \frac{H-H_{\rm ref}}{2\widetilde{H}}+1$.
\end{subequations}
Then, we consider computational maps \eqref{eq:nonlinear_ansatz} from the domain $\Omega=\Omega(H=H_{\rm ref})$ to $\Omega_{\mu}=\Omega(H)$ such that
\begin{equation}
\label{eq:nonlinear_ansatz_turbine}
\texttt{N}(x, \mathbf{a}, \mu) =
\Psi_H^{\rm geo} \circ \Psi \circ \texttt{N}_{\rm p}(\mathbf{a})
\circ \Psi^{-1} (x),
\end{equation}
for a proper choice of the mapping $\Psi$ and the polytope $\Omega_{\rm p}$.
Note that $\widetilde{\texttt{N}}(\mathbf{a}) =
  \Psi \circ \texttt{N}_{\rm p}(\mathbf{a})
\circ \Psi^{-1}$ defines a bijection in  $\Omega$.

\begin{figure}[h!]
\centering
\subfloat[]{ 
\includegraphics[width=.45\textwidth]{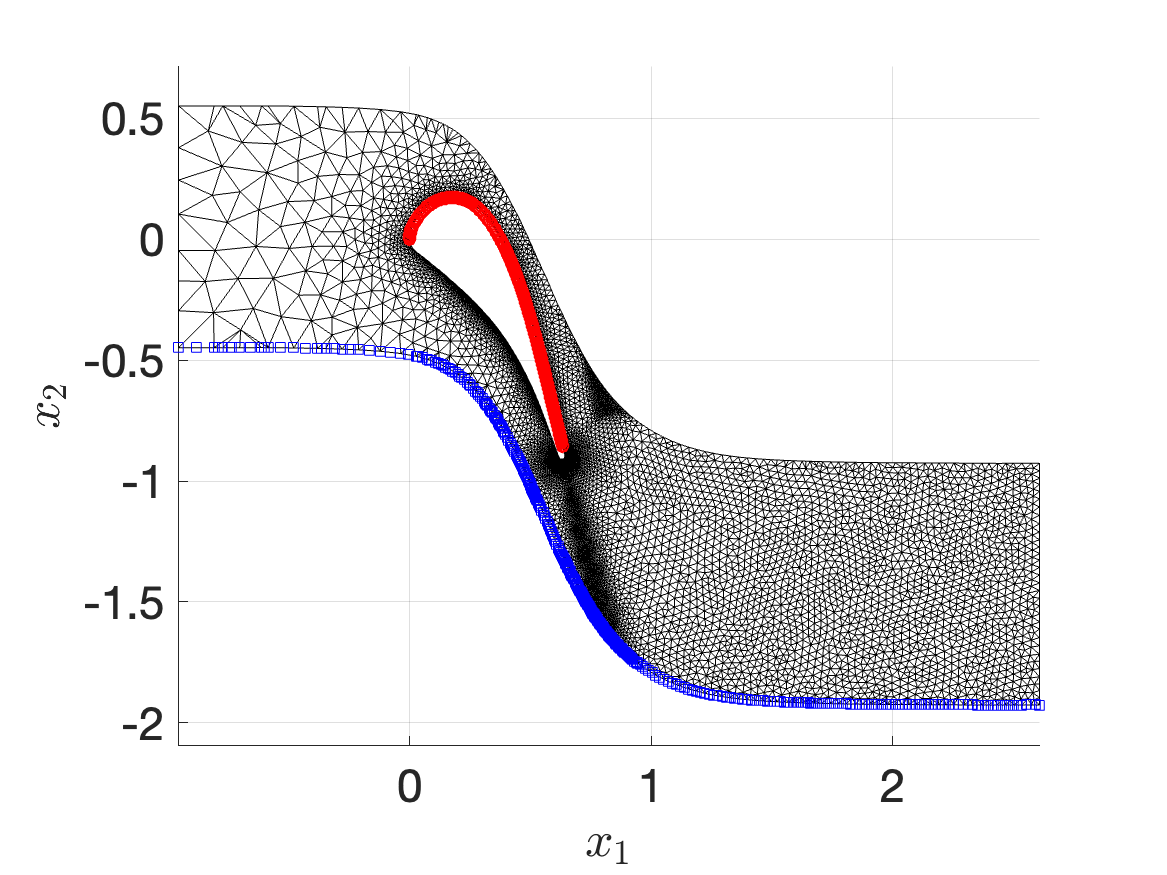}
}
~~
\subfloat[]{ 
\includegraphics[width=.45\textwidth]{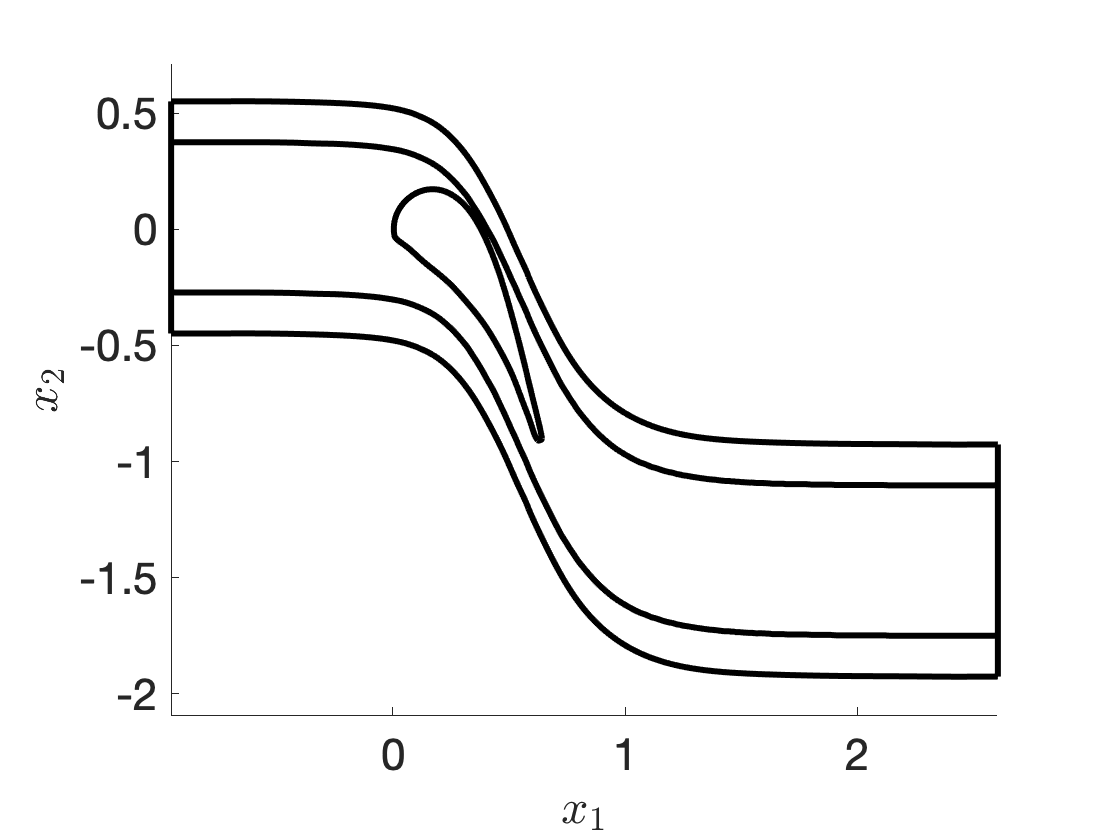}
}

\subfloat[]{
\includegraphics[width=.45\textwidth]{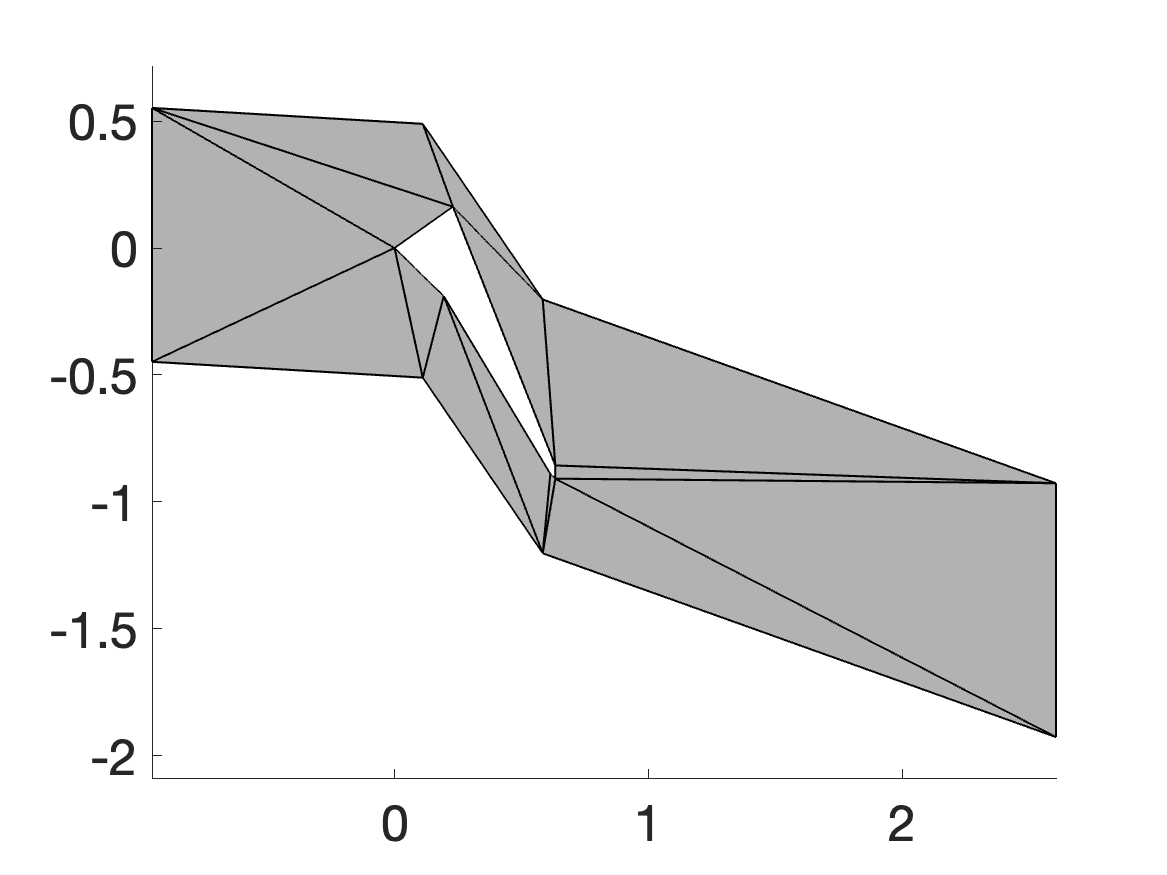}
}
~~
\subfloat[]{
\includegraphics[width=.45\textwidth]{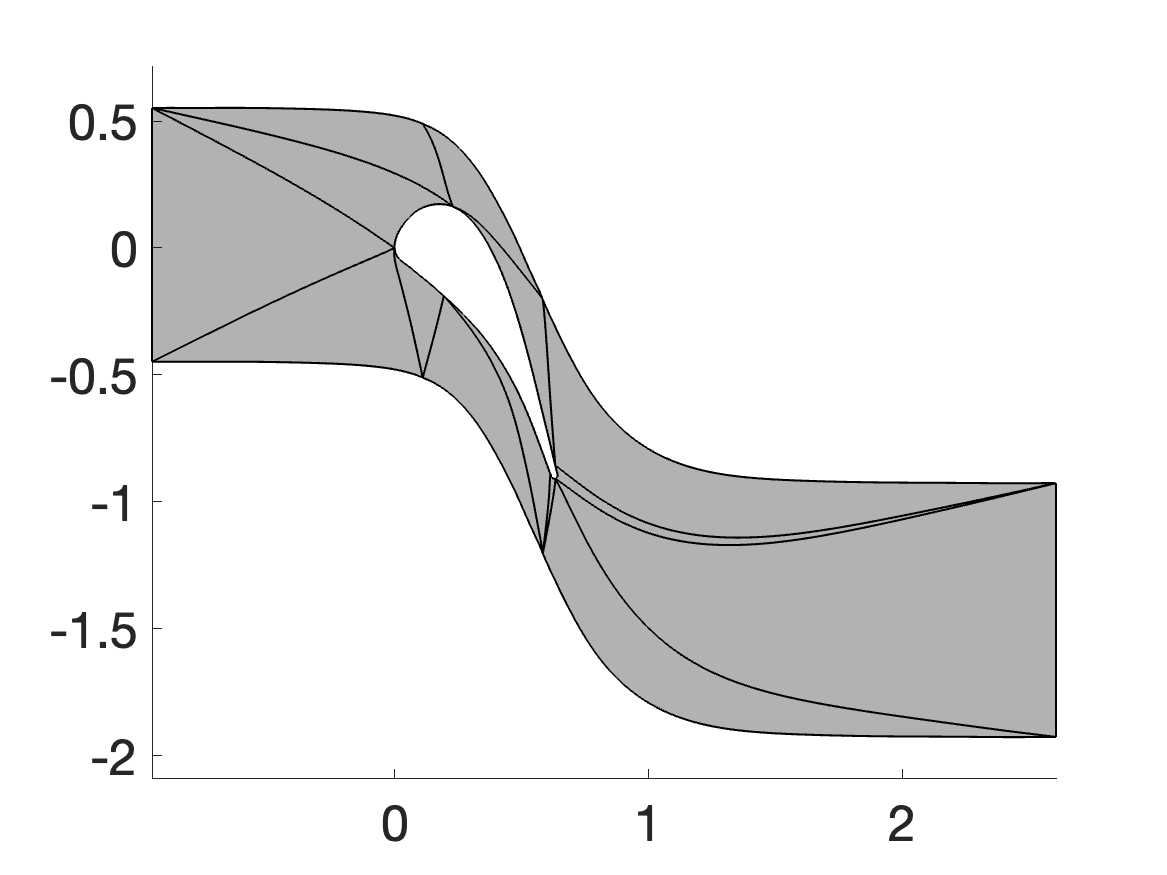}
}

\caption{inviscid flow past an array of LS89 turbine blades.
(a) computational mesh $\mathcal{T}_{\rm pb}$ for DG calculations;
red dots denote the nodes on the upper blade side;
blue squares  denote the nodes on the lower boundary.
(b) partition associated with the geometric map \eqref{eq:geometric_mapping_turbine}.
(c)-(d) coarse-grained linear    mesh 
 $\mathcal{T}_{\rm p}$ and
 curved mesh $\mathcal{T}$ 
 for registration.
}
\label{fig:meshes}
\end{figure} 

The polytope $\Omega_{\rm p}$ and the mapping 
$\Psi:\Omega_{\rm p} \to \Omega$ should be designed to ensure the periodicity constraint
\begin{equation}
\label{eq:periodicity_constraint}
\texttt{N}(x+[0,H_{\rm ref}], \mathbf{a}, \mu) =
\texttt{N}(x, \mathbf{a}, \mu) +[0,H],
\quad
\forall \, x\in \Gamma_{\rm btm}.
\end{equation}
The geometric mapping $\Psi^{\rm geo}$ satisfies the periodicity constraint; it is thus sufficient to enforce that 
$\widetilde{\texttt{N}}(x+[0,H_{\rm ref}], \mathbf{a}, \mu) =
\widetilde{\texttt{N}}(x, \mathbf{a}, \mu) +[0,H_{\rm ref}]$ for all
$x\in \Gamma_{\rm btm}$. Towards this end, 
(i)
we define the polytope $\Omega_{\rm p}$ 
with vertices $V$ such that
$V\cap \Gamma_{\rm btm, p} = V\cap \Gamma_{\rm top, p} +[0,H_{\rm ref}]$, where 
$\Gamma_{\rm btm, p},\Gamma_{\rm top, p}$ denote the lower and upper boundaries of the polytope
(i.e. for every vertex on the lower boundary there exists a corresponding vertex on the upper boundary); 
(ii)
we define  the linear mesh $\mathcal{T}_{\rm p}$ in Figure \ref{fig:meshes}(c) with matching nodes on the periodic boundaries\footnote{The same condition is enforced in the HF mesh.};
(iii) we define the mapping $\Psi$ by solving
\eqref{eq:optimization_georeg} in the affine space
$\widetilde{W} = 
\left\{
\texttt{id} + 
\varphi \in [\mathcal{X}_{\rm hf,p} ]^2 : \varphi \cdot \mathbf{n} \big|_{\partial \Omega_{\rm p} \cap \partial \Omega} = 0,
\;\;
\varphi |_{\Gamma_{\rm btm,p}}
=
\varphi |_{\Gamma_{\rm top,p}}
\right\}$
 and we set
${\mathcal{U}}$ (cf. \eqref{eq:mapping_space}) as
${\mathcal{U}}
=\left\{
\varphi \in [\mathcal{X}_{\rm hf,p} ]^2 : \varphi \cdot \mathbf{n} \big|_{\partial \Omega_{\rm p}} = 0,
\;\;
\varphi |_{\Gamma_{\rm btm,p}}
=
\varphi |_{\Gamma_{\rm top,p}}
\right\}$.
The resulting curved mesh is provided in Figure \ref{fig:meshes}(d); in the experiments, we consider polynomials of degree ten, which implies $M=1359$.
Note that,  since $\widetilde{\mathcal{W}}$ and ${\mathcal{U}}$ are spaces of piecewise polynomials, it suffices to enforce the   periodicity constraint  at mesh nodes.

\subsection{Definition of the sensor}
We consider a target function $\mathfrak{f}^{\rm tg}$ that combines a point-set sensor and a distributed sensor
\begin{equation}
\label{eq:target_blade}
\mathfrak{f}_{\mu}^{\rm tg}
=
\frac{1}{4}\sum_{i=1}^4 \|  \texttt{N}(\bar{x}_i, \mathbf{a}, \mu)  - \widehat{x}_{i,\mu} \|_2^2
\,+\,
\min_{\nu \in \mathcal{S}_n} \int_{\Omega_{\rm p}}
\big|
s_{\mu}\circ \texttt{N}_{\rm p}(\mathbf{a})
-
\nu
\big|^2 J(\Psi) \, dx,
\end{equation}
where the distributed sensor $s_{\mu}$ is obtained from the Mach field, 
the template space $\mathcal{S}_n$ is built adaptively using Algorithm \ref{alg:registration}, 
 and the salient points
$\{  \widehat{x}_{i,\mu}   \}_i$ correspond to the first three peaks of the entropy profile on the lower boundary (cf. Figure \ref{fig:vis_model_problem_slices}(b)) and the shock location on the upper boundary (cf. 
Figure \ref{fig:vis_model_problem_slices}(a)).

We identify the entropy peaks by computing the local maxima of the entropy profile on $\Gamma_{\rm btm,\mu}$. 
We detect the shock location on the upper blade side using the following procedure:
first, we compute the mean values $\{  \overline{{\rm Ma}}_j  \}_j$ and $\{  \overline{d{\rm Ma}}_j  \}_j$ of the Mach number and its tangential derivative on each facet  $\{\texttt{F}_j\}_j$ of the select boundary;
second, we find the index $j^{\star}$ such that
$\overline{{\rm Ma}}_{j^{\star}}>1>\overline{{\rm Ma}}_{j^{\star}+1}$ and 
$ \overline{d{\rm Ma}}_{j^{\star}} 
< - \frac{10^{-2}}{| \texttt{F}_{j^{\star}} |}$ and we return the estimate
$ {x}_{\rm shk,\mu}^{\rm raw}$ equal to the midpoint of the selected facet.

Given the raw estimates $\{  {x}_{i,\mu}^{\rm raw}: i=1,\ldots,4, \mu\in \mathcal{P}_{\rm train} \}$ of the point sensors, we apply radial basis function (RBF) regression to find smoother estimates that facilitate the generalization step (cf. Remark \ref{remark:generalization}); in order to ensure that the points are on the boundary, we apply RBF to the curvilinear coordinates of the points. As shown in Figure \ref{fig:vis_model_problem_slices}(a), the solution does not exhibit any shock for several parameter values; we do not consider these points to train the RBF surrogate,   and we rely on the surrogate itself to find a fictitious estimate of $\widehat{x}_{\rm shk,\mu}$ for all $\mu\in \mathcal{P}$.


\subsection{Performance of the registration procedure}
\label{sec:performance_registration}

We apply Algorithm \ref{alg:registration} based on a regular
$11\times 6$  grid of parameters $\mathcal{P}_{\rm train}$.
Figure \ref{fig:sensor_model_problem_salientpoints} compares the location of the sensors
$\{ \widetilde{x}_{i,\mu} = \left( \Psi_H^{\rm geo}  \right)^{-1}(\widehat{x}_{i,\mu} ) : i=1,\ldots,4, \mu\in \mathcal{P}_{\rm train}\}$ 
and their mapped counterparts
$\{ \widetilde{x}_{i,\mu}' =  \Phi_{\mu}^{-1}(\widehat{x}_{i,\mu} ) : i=1,\ldots,4, \mu\in \mathcal{P}_{\rm train}\}$;
similarly, Figure
\ref{fig:vis_model_problem_slices_ref} reproduces the results in Figure \ref{fig:vis_model_problem_slices} for the mapped solution field. 
We observe that the peaks of the mapped entropy field on $\Gamma_{\rm btm}$ and the shock on the upper side of the blade are nearly insensitive to the parameter value.
For all in-sample and out-of-sample configurations considered, the quality of the deformed mesh --- which is measured by \eqref{eq:f_msh_b} --- is comparable with the one of the original mesh.

A  thorough analysis of the computational cost of the procedure is beyond the scope of the present paper. We observe that the execution time of Algorithm \ref{alg:registration} is $54$ minutes   on a commodity Linux workstation; for comparison, the generation of the training set of HF simulations ($66$ simulations) requires $7$ hours and $33$ minutes on the same machine.

\begin{figure}[h!]
\centering
\subfloat[]{ 
\includegraphics[width=.45\textwidth]{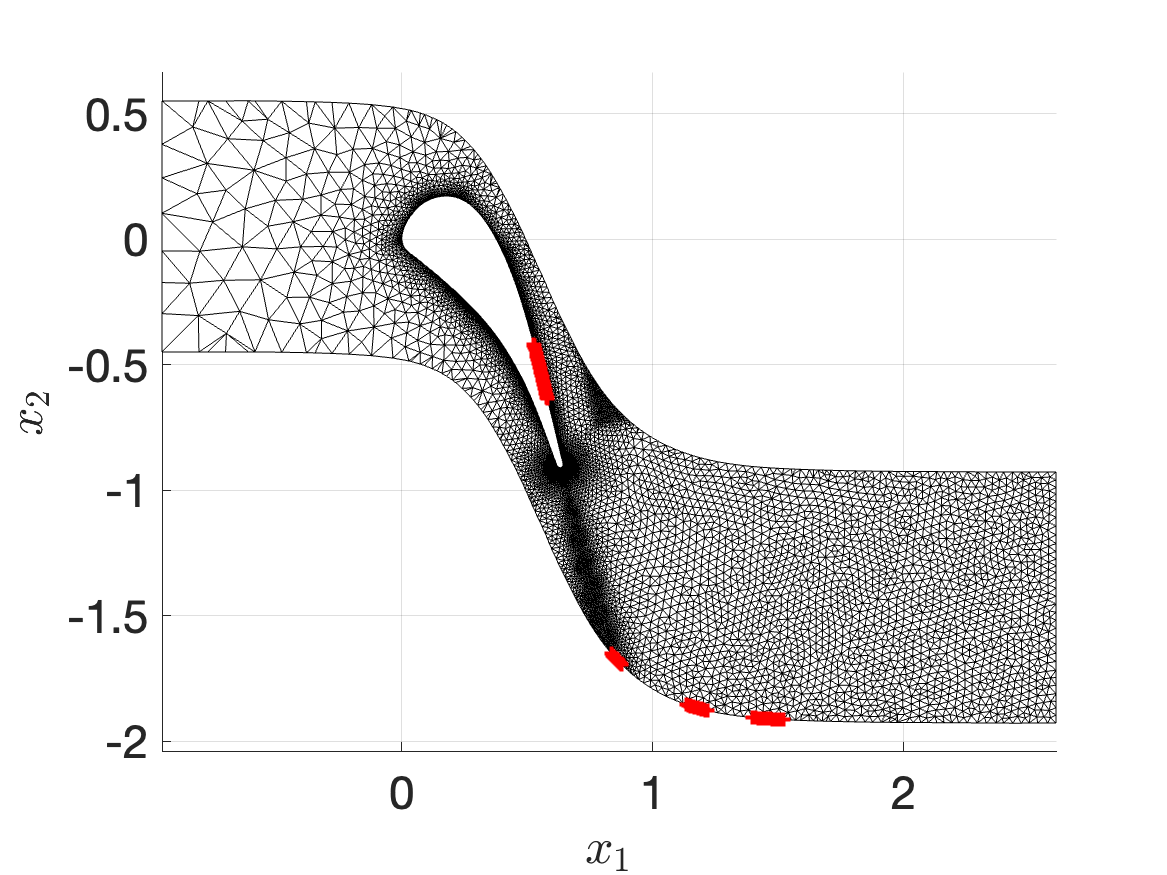}
}
~~
\subfloat[]{
\includegraphics[width=.45\textwidth]{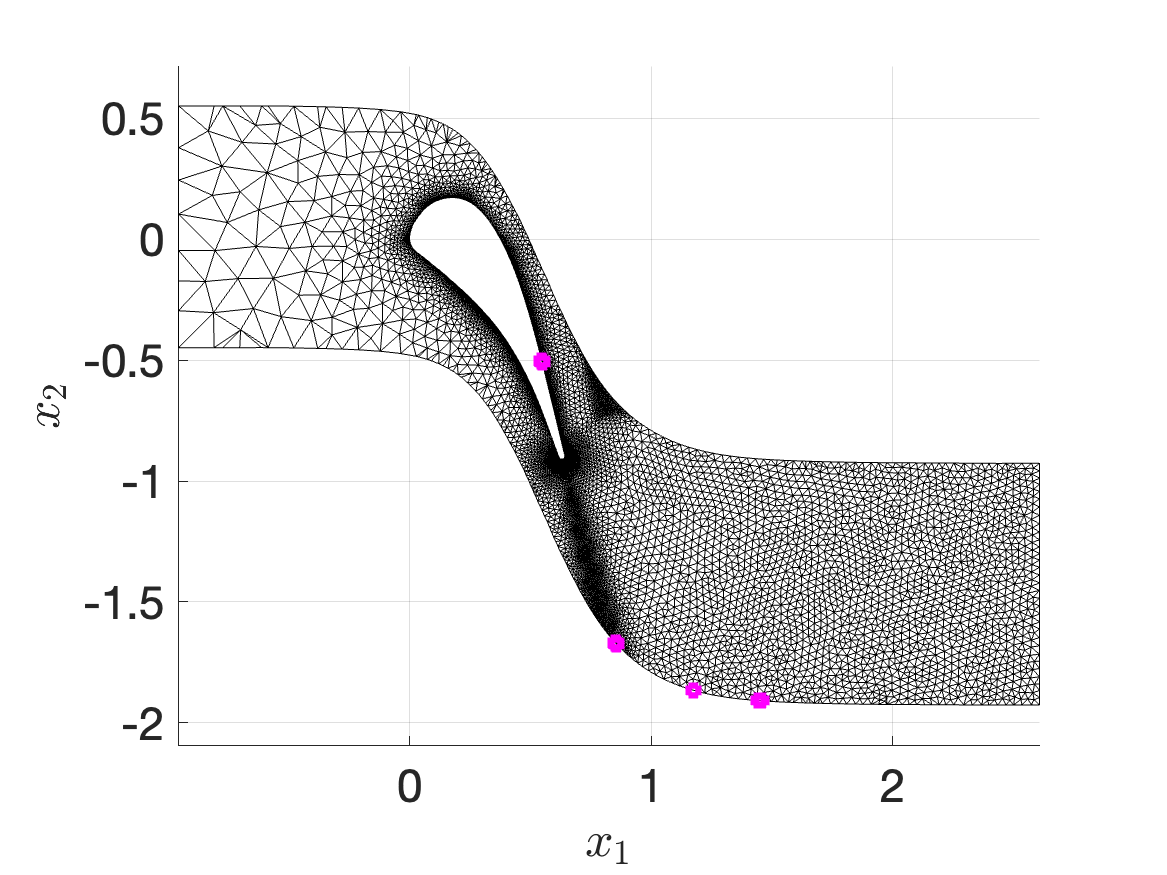}
}

\caption{inviscid flow past an array of LS89 turbine blades.
(a) sensor points used in the registration procedure for $n_{\rm train}=66$ configurations.
(b) mapped sensor points.
}
\label{fig:sensor_model_problem_salientpoints}
\end{figure} 

\begin{figure}[h!]
\centering
\subfloat[]{ 
\includegraphics[width=.47\textwidth]{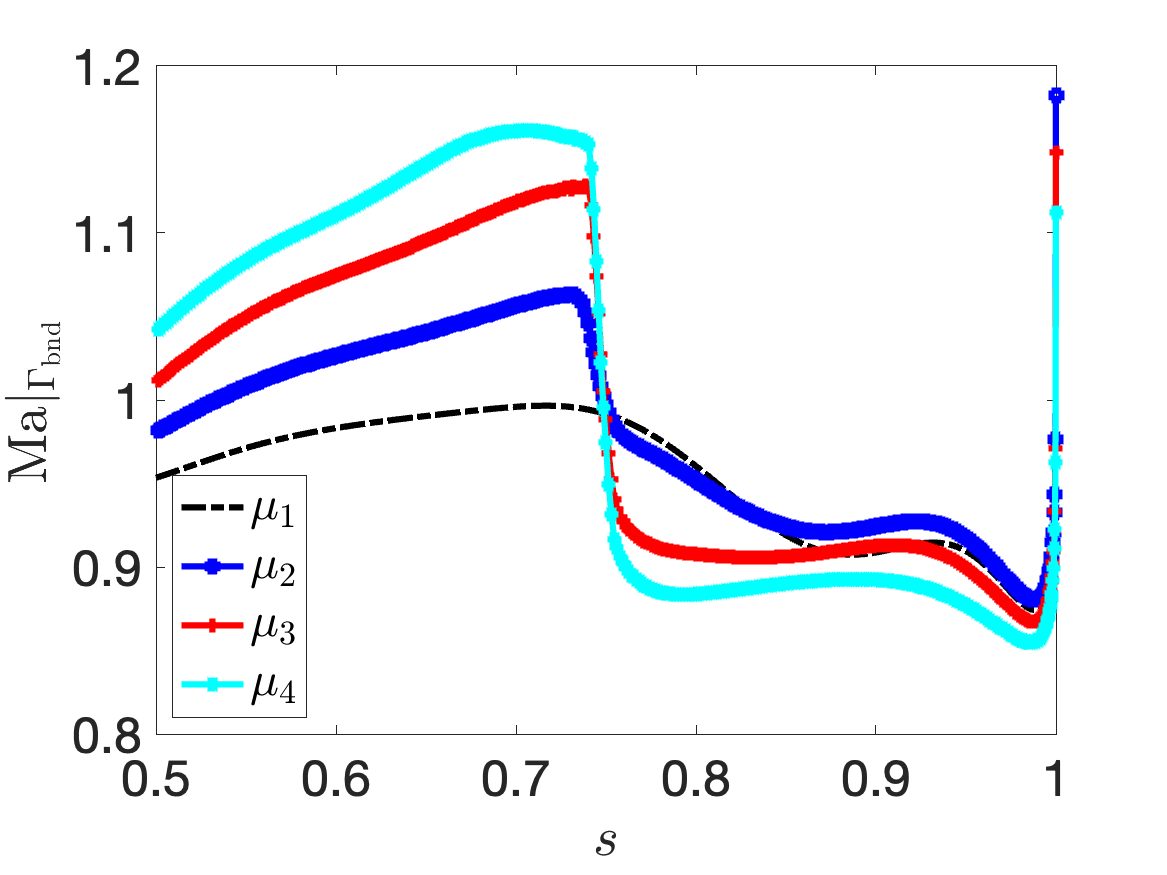}
}
~~
\subfloat[]{
\includegraphics[width=.47\textwidth]{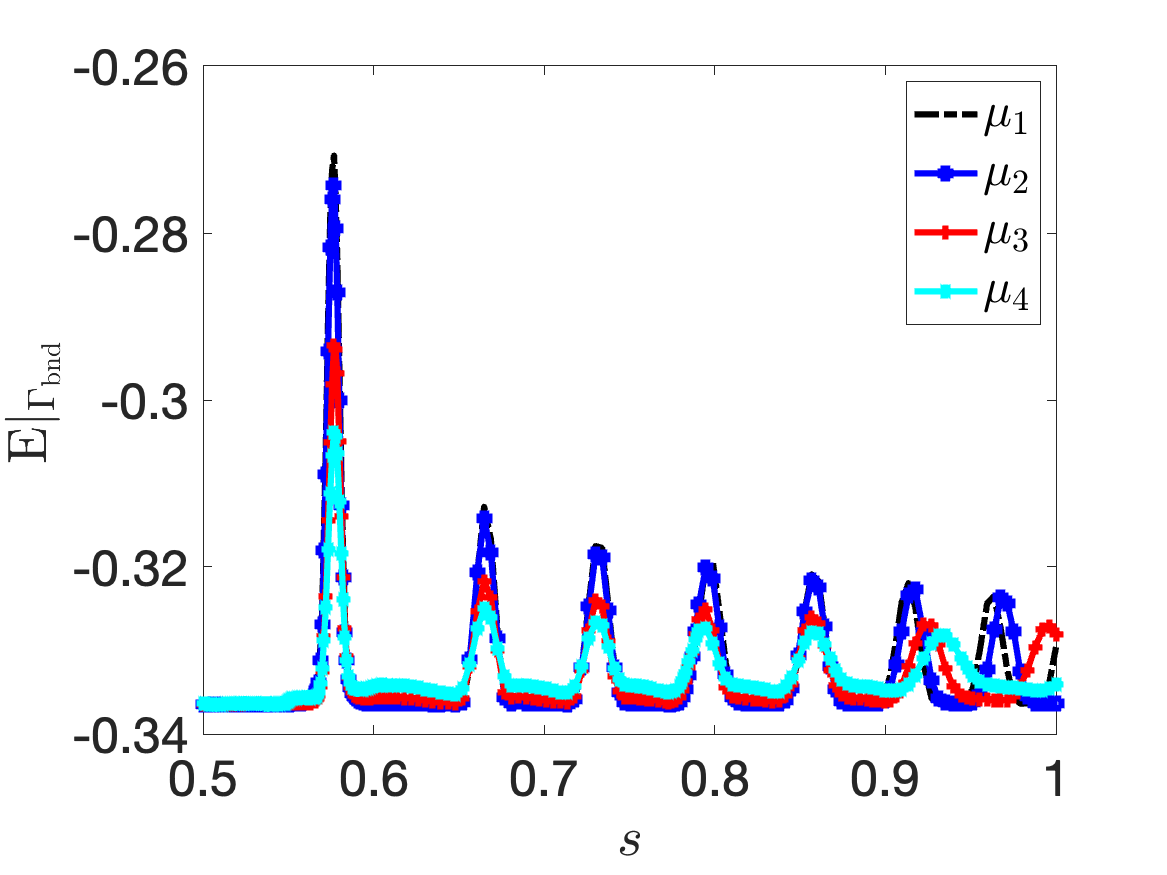}
}

\caption{inviscid flow past an array of LS89 turbine blades.
(a)  mapped Mach  profile over the upper side of the blade for four values of the parameter in $\mathcal{P}$.
(b)  mapped entropy  profile on the bottom boundary   for four values of the parameter in $\mathcal{P}$.
Unmapped profiles are provided in Figure \ref{fig:vis_model_problem_slices}.
}
\label{fig:vis_model_problem_slices_ref}
\end{figure} 

\subsection{Model reduction}
We rely on POD+regression to estimate the solution field. POD solution coefficients are estimated using the same RBF regression method that is employed for the mapping coefficients (cf. Remark \ref{remark:generalization}). 
We denote by 
$u_{\mu}^{\rm hf}$ the HF solution, and by $\widehat{u}_{\mu}$ the POD+RBF estimate. To assess performance, we consider a dataset of $n_{\rm test}=20$ randomly-selected parameters $\mathcal{P}_{\rm test}$ and we compute the $L^2$ maximum relative errors
\begin{equation}
\label{eq:Emax}
E_{\rm max} = 
\max_{\mu\in \mathcal{P}_{\rm test}} \frac{
\| \widehat{u}_{\mu} -   {u}_{\mu}^{\rm hf} \|_{L^2(\Omega_{\mu})}}{\|   {u}_{\mu}^{\rm hf} \|_{L^2(\Omega_{\mu})}},
\quad
E_{\rm max}^{\rm bnd} = 
\max_{\mu\in \mathcal{P}_{\rm test}} 
\frac{
\| \widehat{u}_{\mu} -   {u}_{\mu}^{\rm hf} \|_{L^2(\Gamma_{\rm bld})}}{\|   {u}_{\mu}^{\rm hf} \|_{L^2(\Gamma_{\rm bld})}},
\end{equation}
where $\Gamma_{\rm bld}$ denotes the boundary of the blade.
Figure \ref{fig:performance_nonlinear} shows the results for the ``linear'' ROM and the ``registered'' ROM --- both approaches involve the application of a geometric mapping; the difference is that for the linear ROM the mapping is chosen a priori and corresponds to $\Psi^{\rm geo}$ \eqref{eq:geometric_mapping_turbine}, while for the registered ROM the mapping is chosen through the registration procedure discussed in the paper.
Both linear and registered ROMs reach a plateau for $n\approx 10$ due to the limited amount of datapoints.
We observe that   registration improves performance by a factor  $2.04$ for the global error and a factor $8.04$ for the error on the blade, for the same amount of HF data.

\begin{figure}[h!]
\centering

\subfloat[] {
\begin{tikzpicture}[scale=0.68]
\begin{semilogyaxis}[
xlabel = {\LARGE {$n$}},
  ylabel = {\LARGE {$E_{\rm max}$}},
  line width=1.2pt,
  mark size=3.0pt,
  ymin=0.0001,   ymax=0.05,
xmin=1,   xmax=20,
  ]
  
\addplot[line width=1.pt,color=black,mark=o] table {data/ntrain66/linear.dat};
\label{ROMperformance:linear}
     \addplot[line width=1.pt,color=magenta,mark=square]  table {data/ntrain66/registered.dat};
    \label{ROMperformance:reg};
    
\end{semilogyaxis}

\end{tikzpicture}
}
~~~
\subfloat[] {
\begin{tikzpicture}[scale=0.68]
\begin{semilogyaxis}[
xlabel = {\LARGE {$n$}},
ylabel = {\LARGE {$E_{\rm max}^{\rm bnd}$}},
  line width=1.2pt,
  mark size=3.0pt,
  ymin=0.0001,   ymax=0.05,
xmin=1,   xmax=20,
  ]
  
\addplot[line width=1.pt,color=black,mark=o] table {data/ntrain66/linear_bnd.dat};
\addplot[line width=1.pt,color=magenta,mark=square]  table {data/ntrain66/registered_bnd.dat};
\end{semilogyaxis}

\end{tikzpicture}
}

\bgroup
\sbox0{\ref{data}}%
\pgfmathparse{\ht0/1ex}%
\xdef\refsize{\pgfmathresult ex}%
\egroup

\caption[Caption in ToC]{
 inviscid flow past an array of LS89 turbine blades; model reduction.
 Behavior of the out-of-sample errors
 \eqref{eq:Emax} for the 
linear ROM \tikzref{ROMperformance:linear} and the 
registered ROM \tikzref{ROMperformance:reg} for $n_{\rm train}=66$ and several values of $n$.
}
 \label{fig:performance_nonlinear}
\end{figure}

\section{Summary and discussion}
\label{sec:conclusions}

We addressed the problem of parametric registration for manifolds associated to the solution to parametric PDEs: registration is designed to track coherent structures of the solution field, and ultimately simplify the task of  MOR.
We proposed and analyzed
a new class of compositional maps \eqref{eq:nonlinear_ansatz} for registration in two-dimensional domains. We provided   
sufficient conditions   to ensure the bijectivity of the mapping $\Phi$ (cf. Proposition \ref{th:bijectivity} and Corollary \ref{th:easy_bijectivity_consequence}) and we studied the approximation properties: we found that the ansatz \eqref{eq:nonlinear_ansatz} is dense in a meaningful subspace of diffeomorphisms only for polytopal domains
(Corollary \ref{th:easy_density_consequence}, Lemma \ref{th:density_composition}); on the other hand, the multi-layer generalization \eqref{eq:nonlinear_ansatz_generalized} is dense for arbitrary domains under the assumption of small deformation (cf. Lemma \ref{th:density_multilayer_composition}).
We discussed an actionable procedure to determine the ansatz \eqref{eq:nonlinear_ansatz} for arbitrary domains, which exploits a coarse-grained high-order FE mesh;
finally, we illustrated the performance of the method for a parametric compressible flow past a cascade of turbine blades.

We aim to extend our approach in several directions.
First, we aim to extend Proposition \ref{th:bijectivity} to (a class of) Lipschitz maps. Second, we wish to automate the construction of the polytope $\Omega_{\rm p}$; we also wish to devise effective implementations of the ansatz \eqref{eq:nonlinear_ansatz_generalized}.
Third, we wish to extend our method to three-dimensional domains: this extension   likely requires the development of specialized optimization routines to cope with high-dimensional mapping spaces (i.e., large $M$), and efficient mesh interpolation routines to speed up the evaluation of the objective function in \eqref{eq:tractable_optimization_based_registration}.

As extensively discussed in section \ref{sec:discussion_curved}, our choice of the ansatz   \eqref{eq:nonlinear_ansatz} is a compromise between approximation power and simplicity of implementation. 
In the future, we aim to investigate generalizations of 
\eqref{eq:nonlinear_ansatz}  to improve the approximation power of our model class; in this respect, we plan to develop registration techniques based on the ansatz   \eqref{eq:nonlinear_ansatz_generalized}.

Finally, we remark that registration procedures constitute a key elements of MOR techniques based on Lagrangian approaches.
The results of this paper contribute to motivate the interest in this class of methods for problems with coherent localized structures. Nevertheless, their inadequacy to cope with topology changes (e.g., two shocks that merge into one shock
\cite[Appendix E.1]{taddei2021space}) might  justify the use of different nonlinear proposals
(see, e.g., \cite{barnett2023neural,black2021efficient}).
It is hence of paramount importance to perform thorough comparative studies to study the strengths and the weaknesses of Lagrangian methods and to compare performance with other nonlinear approaches.
 
\section*{Acknowledgments}
The author  acknowledges the support by European Union’s Horizon 2020 research and innovation programme under the Marie Sklodowska-Curie Actions, grant agreement 872442 (ARIA), and the support provided by Inria under the programme 
Exploratory Actions (project AM2OR).
The author  also  thanks  
Prof. Angelo Iollo and 
Dr. Pierre Mounoud (Univ. Bordeaux),  
Dr. Alberto Remigi  (Safran Tech), and Prof. Masayuki Yano (University of Toronto) for fruitful discussions on various aspects of the method.
The author also thanks three anonymous reviewers for their many comments that help improve the quality of the manuscript.

 \appendix

\section{Proof of  Proposition \ref{th:bijectivity}}
\label{sec:proofs}
The key challenge of the proof is that the domain $\Omega_{\rm p}$ is not in general  simply connected: it is hence not possible to directly employ the result in 
 \cite{taddei2020registration} based on Hadamard's global inverse function theorem. To address this issue,
(i) we first prove a variant of 
 Proposition 2.1 in \cite{taddei2020registration} (cf. 
 Proposition \ref{th:mapping_general}) that addresses the case of simply connected domains (cf. Proposition \ref{th:mapping_general});
 (ii) we prove that the restriction of $\Phi$ to $\partial \Omega_{\rm p}$ is a diffeomorphism (cf. Lemma \ref{th:lemma_easy});
 (iii) we prove a technical result  to glue together $C^1$ maps with positive Jacobian determinants
 (cf. Lemma \ref{th:bijectivity_fricking_mess});
 (iv) we exploit the previous results and a well-known extension  
  theorem in differential geometry to prove the desired result. Below, we present a number of preliminary results that address (i), (ii) and (iii) and then we discuss the proof of Proposition \ref{th:bijectivity}.
 
\begin{proposition}
\label{th:mapping_general}
Let $\Omega_{\rm p} \subset \mathbb{R}^2$  be a polytope isomorphic to the unit ball.
Consider the vector-valued function
 ${\Phi}:  \Omega_{\rm p}  \to \mathbb{R}^2$   that satisfies 
(i)
$\Phi \in C^1 \left(  \overline{\Omega}_{\rm p} ; \mathbb{R}^2 \right)$;
(ii)
$| J({\Phi})(x) |= \big|  {\rm det} \left(  \nabla \Phi (x)  \right)  \big|  \geq \epsilon >0$  for all $x \in \Omega_{\rm p}$ and a given  $\epsilon>0$;
(iii)
${\Phi}(\partial  
 \Omega_{\rm p})  \subseteq  \partial \Omega_{\rm p}$.
Then, ${\Phi}$ is a bijection that maps  $ \Omega_{\rm p}$ into $\Omega_{\rm p}$.
\end{proposition}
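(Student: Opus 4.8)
\textbf{Proof plan for Proposition~\ref{th:mapping_general}.}
The plan is to invoke Hadamard's global inverse function theorem, which states that a $C^1$ map $F:\mathbb{R}^n\to\mathbb{R}^n$ that is a local diffeomorphism and \emph{proper} is a global diffeomorphism onto $\mathbb{R}^n$; the standard adaptation to a bounded simply connected domain (as in \cite[Proposition~2.1]{taddei2020registration}) requires that $\Phi$ maps the boundary to the boundary and has nonvanishing Jacobian. First I would reduce to the case that $J(\Phi)>0$ everywhere rather than merely $|J(\Phi)|\ge\epsilon$: since $\Omega_{\rm p}$ is connected and $J(\Phi)$ is continuous and never zero, it has constant sign, and if the sign is negative we compose with a reflection (which does not affect bijectivity), so assume $J(\Phi)\ge\epsilon>0$. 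By the inverse function theorem $\Phi$ is then an open map and a local diffeomorphism.

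Next I would show $\Phi$ maps $\Omega_{\rm p}$ \emph{into} $\Omega_{\rm p}$. Because $\overline{\Omega}_{\rm p}$ is compact and $\Phi$ is continuous, $\Phi(\overline{\Omega}_{\rm p})$ is compact, hence closed; because $\Phi$ is open, $\Phi(\Omega_{\rm p})$ is open. The hypothesis $\Phi(\partial\Omega_{\rm p})\subseteq\partial\Omega_{\rm p}$ means $\Phi(\partial\Omega_{\rm p})\cap\Omega_{\rm p}=\emptyset$, so $\Phi(\Omega_{\rm p})\subseteq\overline{\Omega}_{\rm p}$, and since $\Phi(\Omega_{\rm p})$ is open and disjoint from $\partial\Omega_{\rm p}$ we get $\Phi(\Omega_{\rm p})\subseteq\Omega_{\rm p}$. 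The set $\Phi(\Omega_{\rm p})$ is therefore nonempty, open, and (being $\Phi(\overline{\Omega}_{\rm p})\cap\Omega_{\rm p}$, the intersection of a closed set with $\Omega_{\rm p}$) relatively closed in $\Omega_{\rm p}$; connectedness of $\Omega_{\rm p}$ then forces $\Phi(\Omega_{\rm p})=\Omega_{\rm p}$, i.e.\ $\Phi$ is onto.

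It remains to establish injectivity, which is where the topological hypothesis (isomorphic to the unit ball, hence simply connected) enters. The cleanest route is a degree / covering-space argument: a proper local homeomorphism onto a connected, locally connected, simply connected space is a covering map, and a covering of a simply connected space is a homeomorphism. To apply this I must check properness of $\Phi:\Omega_{\rm p}\to\Omega_{\rm p}$, i.e.\ that preimages of compact sets are compact; this follows because if $x_n\in\Omega_{\rm p}$ with $\Phi(x_n)$ staying in a compact $K\subset\Omega_{\rm p}$ but $x_n\to x^\ast\in\partial\Omega_{\rm p}$, then by continuity $\Phi(x^\ast)\in\partial\Omega_{\rm p}$, contradicting $\Phi(x^\ast)\in K\subset\Omega_{\rm p}$. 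Alternatively one argues directly as in \cite{krantz2002implicit,taddei2020registration}: since $\Omega_{\rm p}$ is simply connected the number of preimages $\#\Phi^{-1}(y)$ is locally constant on $\Omega_{\rm p}$ (local constancy uses that $\Phi$ is a local diffeomorphism and proper), hence globally constant; evaluating near a boundary point where $\Phi$ is locally injective, or using that $\Phi|_{\partial\Omega_{\rm p}}$ is a homeomorphism onto $\partial\Omega_{\rm p}$ of degree one, shows this constant equals $1$. The main obstacle here is marshalling the degree-theoretic bookkeeping carefully enough that one genuinely pins the multiplicity to $1$ — in particular ruling out the possibility of a nontrivial covering — and this is exactly the point at which simple connectivity of $\Omega_{\rm p}$ is indispensable, mirroring the role it plays in the cited Hadamard-type theorem. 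Once injectivity is secured, $\Phi$ is a continuous bijection of $\Omega_{\rm p}$ onto itself, completing the proof.
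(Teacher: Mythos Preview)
Your proposal is correct and follows essentially the same approach as the paper: both reduce $|J(\Phi)|\ge\epsilon$ to a fixed sign by continuity on the connected domain, and both invoke Hadamard's global inverse function theorem (via \cite{krantz2002implicit} and \cite[Proposition~2.1]{taddei2020registration}) for simply connected domains. The paper's proof is in fact just a pointer to \cite{taddei2020registration} with two remarks---that $C^1$ up to the boundary suffices because Lipschitz domains admit $C^1$ extensions to $\mathbb{R}^2$, and that the sign of $J(\Phi)$ is constant---so your write-up is actually more explicit than the paper's. The only cosmetic difference is that the paper extends $\Phi$ to all of $\mathbb{R}^2$ before applying Hadamard, whereas you argue properness directly on $\Omega_{\rm p}$; both routes are standard. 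One small step you gloss over is the implication ``$\Phi(\partial\Omega_{\rm p})\cap\Omega_{\rm p}=\emptyset \Rightarrow \Phi(\Omega_{\rm p})\subseteq\overline{\Omega}_{\rm p}$''; this needs the observation that $\partial\Phi(\overline{\Omega}_{\rm p})\subseteq\Phi(\partial\Omega_{\rm p})\subseteq\partial\Omega_{\rm p}$ (using openness of $\Phi$ on the interior), after which a connectedness argument on $\mathbb{R}^2\setminus\partial\Omega_{\rm p}$ finishes it.
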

 
\begin{proof}
 This result is a variant   of  Proposition 2.1 in \cite{taddei2020registration} for  polytopes $\Omega_{\rm p}$ isomorphic to the unit ball;
 the proof follows the exact same steps of the one in 
\cite{taddei2020registration}; details are omitted.
In \cite{taddei2020registration} we assumed that 
$\Phi$ is of class $C^1$ in a $\delta$-neighborhood of $\Omega_{\rm p}$; since
 $\Omega_{\rm p}$ is a Lipschitz domain,
 if $\Phi$ is of class $C^1$ up to the boundary, then there exists a $C^1$ extension of $\Phi$ to $\mathbb{R}^2$ (cf.
 \cite[section 2.5]{brudnyi2011methods}): we can thus replace the hypothesis (i)
of \cite[Proposition 2.1]{taddei2020registration} with the   hypothesis $ {\Phi} \in C^1 \left(  \overline{\Omega}_{\rm p} ; \mathbb{R}^2 \right)$. 
Furthermore, we replace 
$J({\Phi})(x)   \geq \epsilon >0$ for all $x\in \Omega_{\rm p}$  with the more general assumption
$| J({\Phi})(x) |  \geq \epsilon >0$:
since $ J({\Phi})$ is continuous, this condition means that 
$ J({\Phi})$ is either strictly positive or strictly negative in $\Omega_{\rm p}$.
 \end{proof}
 
\begin{lemma}
\label{th:lemma_easy}
Let $\Phi = \texttt{id} + \varphi$ be a
$C^1(\overline{\Omega}_{\rm p}; \mathbb{R}^2)$ field   such that  $\inf_{x \in \Omega_{\rm p}} J(\Phi) > 0$ and
$\varphi \cdot \mathbf{n}|_{\partial \Omega_{\rm p}} = 0$.
Then,  
 $\Phi (\partial \Omega_{\rm p}) = \partial \Omega_{\rm p}$,   and  $\Phi|_{\partial \Omega_{\rm p}}$ is a diffeomorphism in  $\partial \Omega_{\rm p}$.
\end{lemma}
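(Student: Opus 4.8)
\textbf{Proof strategy for Lemma \ref{th:lemma_easy}.} The plan is to show separately that $\Phi$ maps $\partial \Omega_{\rm p}$ onto itself and that this boundary restriction is a local $C^1$-diffeomorphism with $C^1$ inverse, from which the global diffeomorphism claim on $\partial \Omega_{\rm p}$ follows since $\partial \Omega_{\rm p}$ is a compact one-dimensional manifold (a finite disjoint union of polygonal loops).

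\emph{Step 1: $\Phi$ maps $\partial \Omega_{\rm p}$ into itself.} Since $\varphi \cdot \mathbf{n}|_{\partial \Omega_{\rm p}} = 0$, for any point $x$ in the relative interior of an edge $F$ the displacement $\varphi(x)$ is tangential to $F$, so $\Phi(x) = x + \varphi(x)$ stays on the line containing $F$. To control that $\Phi(x)$ does not leave the edge (or escape $\partial \Omega_{\rm p}$ entirely at a vertex), I would use the same normal-transport argument already invoked in the proof of Proposition \ref{th:vertices_plus_density}: by continuity of $\varphi$ and the constraint on the boundary, each closed polygonal chain of $\partial \Omega_{\rm p}$ is mapped continuously into $\partial \Omega_{\rm p}$; a degree/continuity argument (the restriction is a continuous map of a circle into the one-dimensional set $\partial \Omega_{\rm p}$ that is locally injective, hence a covering map of each loop onto a loop) forces $\Phi(\partial \Omega_{\rm p}) \subseteq \partial \Omega_{\rm p}$, and in fact each loop onto itself.

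\emph{Step 2: $\Phi(\partial \Omega_{\rm p}) = \partial \Omega_{\rm p}$ and the restriction is locally bijective.} Apply Proposition \ref{th:mapping_general} componentwise on the polytopes $\Omega_{\rm ext}, \Omega_{{\rm int},1},\dots,\Omega_{{\rm int},N}$ from Definition \ref{def:regular_polytopes} (each isomorphic to the unit ball, $\Phi\in C^1$ up to the boundary with $J(\Phi)\ge\epsilon>0$, and $\Phi$ maps their boundaries into themselves by Step 1) to conclude $\Phi$ is a bijection of each, hence a bijection of $\Omega_{\rm p}$, hence $\Phi(\partial \Omega_{\rm p}) = \partial \Omega_{\rm p}$ and $\Phi|_{\partial \Omega_{\rm p}}$ is a bijection. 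For the differentiable structure: away from vertices, $\Phi$ restricted to an edge $F$ has the form $s \mapsto x_1^{\rm v} + \alpha(s)\mathbf{t}_{\rm f}$ with $\alpha$ monotone and $C^1$ (arc-length reparameterization of a $C^1$ curve). Its derivative cannot vanish: if $\alpha'(s_0)=0$, then the tangential derivative of $\Phi$ along $F$ vanishes at that point, so the Jacobian $\nabla\Phi(x_0)$ would be rank-deficient in the tangential direction while the $\mathbf{n}$-component is also constrained, contradicting $\det\nabla\Phi(x_0)\ge\epsilon>0$; hence $\Phi|_F$ is a $C^1$-diffeomorphism onto its image. This makes $\Phi|_{\partial \Omega_{\rm p}}$ a bijective local $C^1$-diffeomorphism between compact one-manifolds, hence a global $C^1$-diffeomorphism.

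\emph{Main obstacle.} I expect the delicate point to be Step 1 together with the behavior \emph{at the vertices}: verifying that $\Phi$ does not map an edge across a vertex into a \emph{different} boundary component or into a non-adjacent edge, and that the images of distinct edges tile $\partial \Omega_{\rm p}$ correctly. This is exactly where the argument of Proposition \ref{th:vertices_plus_density} is needed — one must show $\Phi(V)=V$ (via Nanson's formula and discontinuity of the normal), then that each edge maps to a single edge, and finally glue the edgewise diffeomorphisms into a continuous, and hence $C^1$, diffeomorphism of each polygonal loop. Since $\Phi=\texttt{id}+\varphi$ with $\varphi\in C^1(\overline\Omega_{\rm p})$ and $\varphi\cdot\mathbf{n}|_{\partial\Omega_{\rm p}}=0$, Proposition \ref{th:vertices_plus_density} applies verbatim (its hypotheses are met once Step 2 establishes that $\Phi$ is a diffeomorphism of $\Omega_{\rm p}$), so I would cite it directly rather than reprove it, noting only that $\Phi|_V=\texttt{id}$ need not hold here — just $\Phi(V)=V$ — which is all that is required for the edgewise gluing.
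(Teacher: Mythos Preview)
Your Step 2 contains a genuine gap: you propose to apply Proposition \ref{th:mapping_general} to the domains $\Omega_{\rm ext},\Omega_{{\rm int},1},\ldots,\Omega_{{\rm int},N}$, but $\Phi$ is only defined on $\overline{\Omega}_{\rm p}=\overline{\Omega}_{\rm ext}\setminus\bigcup_i\Omega_{{\rm int},i}$, not on the interiors of the $\Omega_{{\rm int},i}$ nor on all of $\Omega_{\rm ext}$. Proposition \ref{th:mapping_general} needs $\Phi\in C^1$ on the \emph{closure} of a simply-connected polytope with $|J(\Phi)|\ge\epsilon$ throughout; you have none of those domains available. This is precisely the obstruction that Lemma \ref{th:lemma_easy} is designed to overcome in the proof of Proposition \ref{th:bijectivity} (the extension to the holes via Cerf's theorem and the gluing Lemma \ref{th:bijectivity_fricking_mess} come \emph{after} Lemma \ref{th:lemma_easy}), so invoking global bijectivity here is circular.

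The good news is that your local argument in Step 2 --- $\alpha'(s_0)=0$ would force $\nabla\Phi(x_0)\mathbf{t}_{\rm f}=0$ (since $\mathbf{n}_{\rm f}^\top\nabla\Phi\,\mathbf{t}_{\rm f}\equiv 0$ along $F$), contradicting $J(\Phi)>0$ --- is exactly the paper's key step, and it is \emph{all} you need. The paper's proof is purely edgewise and uses no global input: first observe that $\varphi$ vanishes at every vertex (at a vertex two linearly independent normals both annihilate $\varphi$, so $\varphi=0$ there; thus $\Phi|_V=\texttt{id}$, contrary to your last remark). Then, for each edge $F$, write $\Phi(\gamma_{\rm f}(s))=x_1^{\rm v}+\alpha(s)\mathbf{t}_{\rm f}$ with $\alpha(0)=0$, $\alpha(1)=\|x_2^{\rm v}-x_1^{\rm v}\|_2$; the Jacobian argument forbids $\alpha'=0$, so $\alpha$ is strictly monotone, hence $\Phi(F)=F$ bijectively. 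Gluing over edges gives $\Phi(\partial\Omega_{\rm p})=\partial\Omega_{\rm p}$ and the boundary diffeomorphism, with no appeal to Proposition \ref{th:mapping_general} or Proposition \ref{th:vertices_plus_density}.
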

\begin{proof} 
It suffices to prove that  each edge $F$ is mapped in itself  and  that the restriction of $\Phi$ to $F$
is a bijection.
Towards this end, we define the vertices $x_1^{\rm v},x_2^{\rm v}$ of $F$, the tangent and normal vectors $\mathbf{t}_{\rm f}, \mathbf{n}_{\rm f}$ and the parameterization $\gamma_{\rm f}:[0,1]\to F$ such that 
$\gamma_{\rm f}(s) = x_1^{\rm v} + s \|x_2^{\rm v} -x_1^{\rm v}   \|_2 \mathbf{t}_{\rm f}$. We also introduce the normal vectors $\mathbf{n}_{\rm f}^-, \mathbf{n}_{\rm f}^+$ of the neighboring edges that meet $F$ at $x_1^{\rm v},x_2^{\rm v}$, respectively.

 We first observe that $\varphi (x_1^{\rm v}) = \varphi(x_2^{\rm v})= 0$:
since $F$ meets with the neighboring edges, the pairs of normals 
  $\mathbf{n}_{\rm f}^-,\mathbf{n}_{\rm f}$  and 
   $\mathbf{n}_{\rm f}^+,\mathbf{n}_{\rm f}$ should be linearly independent (i.e., they should span $\mathbb{R}^2$);  then, exploiting the continuity of $\varphi$ up to the boundary of $ \Omega_{\rm p}$ --- and in particular on $\partial \Omega_{\rm p}$ --- we find
 $\varphi(x_1^{\rm v})\cdot \mathbf{n}_{\rm f}^- = \varphi(x_1^{\rm v})\cdot \mathbf{n}_{\rm f} = 0$
and
 $\varphi(x_2^{\rm v})\cdot \mathbf{n}_{\rm f}  = \varphi(x_2^{\rm v})\cdot \mathbf{n}_{\rm f}^+ = 0$, which imply 
  $\varphi(x_1^{\rm v}) = \varphi(x_2^{\rm v})= 0$.
 
 By contradiction, $\Phi(F) \not\subset F$. 
Given $x\in F$, we find
 $$
 \varphi(x) = (\varphi(x)\cdot  \mathbf{n}_{\rm f}) \mathbf{n}_{\rm f} + (\varphi(x)\cdot  \mathbf{t}_{\rm f}) \mathbf{t}_{\rm f} =  (\varphi(x)\cdot  \mathbf{t}_{\rm f}) \mathbf{t}_{\rm f}
 $$
  and thus 
$
\Phi(\gamma_{\rm f}(s)) = x_1^{\rm v} + \alpha(s) 
 \mathbf{t}_{\rm f}$ with 
$\alpha(s) = 
(\Phi(\gamma_{\rm f}(s))  -x_1^{\rm v} )\cdot \mathbf{t}_{\rm f}.$
Since $\varphi(x_1^{\rm v}) = \varphi(x_2^{\rm v})= 0$, we find $\alpha(0) = 0, \alpha(1)=
\| x_2^{\rm v}  - x_1^{\rm v}   \|_2$:
 exploiting the intermediate value theorem, we   find that 
$[0,1]  \subset \alpha([0,1]) $ and thus
 $F \subset \Phi(F)$. 
The condition
 $\Phi(F) \not\subset F$ implies  that $\alpha(s)\notin [0,1]$ for some $s\in [0,1]$ and thus 
  $\alpha'(s^{\star}) = 
  \lVert x_2^\mathrm{v}-x_1^\mathrm{v}\rVert_2
  \mathbf{t}_{\rm f}^T \nabla \Phi(\gamma_{\rm f}(s^{\star})) \mathbf{t}_{\rm f} = 0$ for some $s^{\star}\in [0,1]$. 
  Note that $\Phi(\gamma_{\rm f}(s)) \cdot \mathbf{n}_{\rm f} = x_1^{\rm v} \cdot  \mathbf{n}_{\rm f}$ for any $s\in [0,1]$; if we differentiate left-  and right-hand sides of the previous identity with respect to $s$ we obtain $ 
  \lVert x_2^\mathrm{v}-x_1^\mathrm{v}\rVert_2  
  \mathbf{n}_{\rm f}^\top \nabla \Phi(\gamma_{\rm f}(s) )  \mathbf{t}_{\rm f} = 0$ for all $s\in [0,1]$. Therefore,  if we set $x^{\star} = \gamma_{\rm f}(s^{\star})$, we find
  $
  \nabla \Phi( x^{\star} ) \mathbf{t}_{\rm f} 
=  
\left(  \mathbf{t}_{\rm f}^\top   \nabla 
\Phi( x^{\star} ) \mathbf{t}_{\rm f} \right) \mathbf{t}_{\rm f}
+
\left(  \mathbf{n}_{\rm f}^T   \nabla 
\Phi(  x^{\star} ) \mathbf{t}_{\rm f} \right) \mathbf{n}_{\rm f}
= 0, 
$
which implies that  $\nabla \Phi$ is singular at 
$x^{\star}$
and thus
$\min_{x\in \overline{\Omega}_{\rm p}} J(\Phi) \leq  J(\Phi)(x^{\star})=0$. Contradiction.

So far, we proved that the $C^1$ function $\alpha:[0,1]\to\mathbb{R}$ is surjective in  $[0,1]$ and is monotonic increasing. Therefore, $\alpha$ is bijective in  $[0,1]$ and thus $\Phi|_F$ is  a bijection in $F$.
\end{proof}

\begin{lemma}
\label{th:bijectivity_fricking_mess}
Let  $\Omega = \Omega_{\rm ext} \setminus \bigcup_{i=1}^N \Omega_{{\rm int},i}$ be a polytope that satisfies the conditions of Definition 
\ref{def:regular_polytopes}.
We define $\Omega_{\rm int}=\bigcup_{i=1}^N \Omega_{{\rm int},i}$.
Let $\Phi^+: \Omega\to \mathbb{R}^2$ and $\Phi^-: \Omega_{\rm int} \to \mathbb{R}^2$ be  $C^1$ maps up to the boundary that coincide on $\Gamma:= \partial \Omega_{\rm int}$ and satisfy 
$J(\Phi^+) (x) \geq \epsilon>0$ for all $x\in \Omega$,
$J(\Phi^-) (x) \geq \epsilon>0$ for all $x\in \Omega_{\rm int}$. 
We define
$\Phi :\Omega_{\rm ext} \to \mathbb{R}^2$ 
that 
is equal to $\Phi^+$ in $\Omega$ and is equal to
$\Phi^-$ in $\Omega_{\rm int}$.
Then, there exists a sequence 
 $\{ \Phi_j   \}_j \subset C^1(\overline{\Omega}; \mathbb{R}^2)$ such that
 (i) $\Phi_j = \Phi$ on  $\partial \Omega_{\rm ext}$,
 (ii) $\lim_{j\to \infty} \| \Phi_j - \Phi  \|_{L^{\infty}(\Omega_{\rm ext})} = 0$,
 (iii) $\min_{x\in \overline{\Omega}} J(\Phi_j) \geq \frac{1}{2}\epsilon$ for $j=1,2,\ldots$.
 \end{lemma}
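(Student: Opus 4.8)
The plan is to build $\Phi_j$ by mollifying $\Phi$ only inside a thin tubular neighbourhood of the interface $\Gamma$ and patching the mollified map back to $\Phi$ away from $\Gamma$ by a smooth cut-off; the maps will be defined on all of $\overline{\Omega}_{\rm ext}$ (which contains $\overline{\Omega}$). First I would record the elementary facts: $\Phi$ is continuous on $\overline{\Omega}_{\rm ext}$, it is $C^1$ on $\overline{\Omega}_{\rm ext}\setminus\Gamma$ and Lipschitz there with some constant $L$ (obtained by telescoping a segment across $\Gamma$ using that $\Phi^+,\Phi^-$ coincide on $\Gamma$), its a.e.\ gradient satisfies $\det\nabla\Phi\ge\epsilon$, and --- since each $\Omega_{{\rm int},i}$ is compactly embedded in $\Omega_{\rm ext}$ --- the distance $d_0={\rm dist}(\Gamma,\partial\Omega_{\rm ext})$ is strictly positive, so a sufficiently thin neighbourhood of $\Gamma$ avoids $\partial\Omega_{\rm ext}$.

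The algebraic heart of the argument, and the step I expect to be the main obstacle, is that the gradient jump across $\Gamma$ is rank one. Indeed $g:=\Phi^+-\Phi^-\equiv 0$ on the one-dimensional set $\Gamma$, so at a point $x_0$ in the relative interior of a facet of $\Gamma$ with tangent $\mathbf{t}$ we have $R(x_0):=\nabla\Phi^+(x_0)-\nabla\Phi^-(x_0)$ with $R(x_0)\mathbf{t}=\tfrac{d}{ds}g(x_0+s\mathbf{t})|_{s=0}=0$, while at a vertex of $\Gamma$ two independent directions lie in the kernel, forcing $R(x_0)=0$; in all cases ${\rm rank}\,R(x_0)\le 1$. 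For $2\times 2$ matrices $t\mapsto\det\bigl(\nabla\Phi^-(x_0)+tR(x_0)\bigr)$ is then affine in $t$ (the quadratic term is $t^2\det R(x_0)=0$), and since it equals $\det\nabla\Phi^-(x_0)\ge\epsilon$ at $t=0$ and $\det\nabla\Phi^+(x_0)\ge\epsilon$ at $t=1$, we get $\det\bigl(\alpha\nabla\Phi^+(x_0)+(1-\alpha)\nabla\Phi^-(x_0)\bigr)\ge\epsilon$ for every $\alpha\in[0,1]$. This is exactly why the smoothing cannot destroy the Jacobian bound; for a generic continuous, piecewise-$C^1$ map the lemma would be false, and everything rests on $\Phi^+=\Phi^-$ on $\Gamma$.

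With this in hand I would fix a standard mollifier $\rho_\eta$ and, for each $j$, a cut-off $\chi_j\in C^\infty(\mathbb{R}^2;[0,1])$ with $\chi_j\equiv 1$ on $\{{\rm dist}(\cdot,\Gamma)\le\delta_j/2\}$, ${\rm supp}\,\chi_j\subset\{{\rm dist}(\cdot,\Gamma)<\delta_j\}$ and $\|\nabla\chi_j\|_\infty\le C/\delta_j$, with $\delta_j\to 0$, $\eta_j<\delta_j/2$ and $\eta_j/\delta_j\to 0$ (e.g.\ $\delta_j=1/j$, $\eta_j=1/j^2$); then set $\Phi_j:=\chi_j\,(\Phi*\rho_{\eta_j})+(1-\chi_j)\,\Phi$, which is meaningful for $j$ large since the convolution is only needed near ${\rm supp}\,\chi_j$, well inside $\Omega_{\rm ext}$. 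Condition (i) is immediate because $\chi_j$ vanishes near $\partial\Omega_{\rm ext}$; condition (ii) follows from $\|\Phi_j-\Phi\|_{L^\infty}\le\|\Phi*\rho_{\eta_j}-\Phi\|_{L^\infty({\rm supp}\,\chi_j)}\le L\eta_j\to 0$; and $\Phi_j\in C^1(\overline{\Omega}_{\rm ext};\mathbb{R}^2)$ because on $\{{\rm dist}<\delta_j/2\}$ it equals the smooth map $\Phi*\rho_{\eta_j}$, on $\{{\rm dist}>\delta_j\}$ it equals the locally $C^1$ map $\Phi$, and on the annulus between (which does not meet $\Gamma$) it is a smooth combination of two $C^1$ maps.

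Finally I would verify (iii) region by region, all for $j$ large and relabelling the sequence at the end. On $\{{\rm dist}(\cdot,\Gamma)\ge\delta_j\}$, $\Phi_j=\Phi$ so $\det\nabla\Phi_j\ge\epsilon$. On the annulus $\{\delta_j/2\le{\rm dist}(\cdot,\Gamma)\le\delta_j\}$ the balls $B_{\eta_j}(x)$ stay on a single side of $\Gamma$, so $\Phi*\rho_{\eta_j}$ is there a mollification of $\Phi^+$ or of $\Phi^-$ alone and, by the fixed moduli of continuity of $\nabla\Phi^\pm$, $\nabla(\Phi*\rho_{\eta_j})\to\nabla\Phi$ uniformly on this annulus; hence the convex combination $\chi_j\nabla(\Phi*\rho_{\eta_j})+(1-\chi_j)\nabla\Phi$ stays within $o_j(1)$ of $\nabla\Phi$, the extra term $(\Phi*\rho_{\eta_j}-\Phi)\otimes\nabla\chi_j$ is $O(L\eta_j/\delta_j)=o_j(1)$, and so $\det\nabla\Phi_j\ge\epsilon-o_j(1)$. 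On the core $\{{\rm dist}(\cdot,\Gamma)\le\delta_j/2\}$ one has $\nabla\Phi_j=(\nabla\Phi)*\rho_{\eta_j}$; splitting the convolution over the two sides of $\Gamma$ and writing $x_0$ for the point of $\Gamma$ nearest to $x$ (so $|x-y-x_0|<\delta_j$ for $|y|<\eta_j$), uniform continuity of $\nabla\Phi^\pm$ gives $\nabla\Phi_j(x)=\alpha_j(x)\nabla\Phi^+(x_0)+(1-\alpha_j(x))\nabla\Phi^-(x_0)+E_j(x)$ with $\alpha_j(x)\in[0,1]$ (the mass of $\rho_{\eta_j}$ on the $\Omega$-side) and $\|E_j\|_{L^\infty}\to 0$; the rank-one fact forces the determinant of the convex combination to be $\ge\epsilon$, and continuity of the determinant on bounded sets gives $\det\nabla\Phi_j\ge\epsilon-o_j(1)$ once more. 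Choosing $j$ large enough that all three $o_j(1)$ terms are $\le\epsilon/2$ and restarting the sequence at that index yields (iii).
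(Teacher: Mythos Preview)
Your argument is correct and, in fact, cleaner than the paper's. Both proofs ultimately rest on the same algebraic fact --- that the jump $R(x_0)=\nabla\Phi^+(x_0)-\nabla\Phi^-(x_0)$ annihilates the tangent to $\Gamma$ and is therefore rank~$\le 1$ --- but they exploit it very differently. The paper extends $\Phi^\pm$ to $C^1$ maps $\Phi_{\rm e}^\pm$ on $\mathbb{R}^2$, smooths $\Gamma$ to a $C^1$ curve $\Gamma_\delta$, builds an explicit cubic transition $\psi_{\eta,\delta}$ on a tubular neighbourhood, and then computes $\nabla\Phi_j$ in the local frame $(\mathbf{t},\mathbf{n})$, $(\mathbf{t}_\Phi,\mathbf{n}_\Phi)$; the rank-one structure appears only implicitly through the explicit coordinate formula for $J(\Phi_j)$, and the construction still requires an auxiliary function $f(u)$ (shifting the tube according to the sign of the normal-derivative jump $\llbracket\gamma\rrbracket$) plus a separate estimate near the vertices of $\Gamma$. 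Your mollification-plus-cutoff construction sidesteps all of this: once you isolate the rank-one observation, the affinity of $t\mapsto\det\big(\nabla\Phi^-(x_0)+tR(x_0)\big)$ immediately gives $\det\ge\epsilon$ on every convex combination, the convolution automatically produces such a combination (up to an $o_j(1)$ error controlled by the moduli of continuity of $\nabla\Phi^\pm$), and the vertices require no special treatment since there $R(x_0)=0$. The paper's route is more constructive and exhibits the geometry explicitly; yours is shorter, uses only standard mollification, and makes the role of the rank-one jump transparent.
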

\begin{proof}
We assume that $\Omega_{\rm int}$ is isomorphic to the unit ball (that is,
$\Omega_{{\rm int}}=\Omega_{{\rm int},1}$ and $N=1$); the proof can be extended to $N>1$  through an inductive process, iteratively incorporating one subdomain at each step.
 We define the piecewise-linear 
 parameterization $\boldsymbol{\gamma}  :(0,1) \to \Gamma$;
 we 
 denote by $\mathbf{t}$ the tangent vector to $\Gamma$,  by $\mathbf{n}$ the normal vector that points towards $\Omega$, and by
 $V_{\rm int}$ the vertices of $\Omega$ on $\Gamma$;
 ${\rm dist}_{\rm H}(A,B)$ is the Hausdorff distance between the sets $A$ and $B$.
  Given $\delta>0$, we define the set
  $V_{\rm int,\delta} = \bigcup_{x\in V_{\rm int}} \mathcal{B}_{\delta}(x)$, and    
  the $C^1$ approximation $\Gamma_{\delta}$ of $\Gamma$ such that
 ${\rm dist}_{\rm H}(\Gamma_{\delta} ,\Gamma ) \leq \delta$ and
  ${\rm dist}_{\rm H}(\Gamma_{\delta} \setminus  V_{\rm int,\delta} ,\Gamma \setminus  V_{\rm int,\delta}) = 0$. Then, we  
introduce the $C^1$  parameterization $\boldsymbol{\gamma}_{\delta} :(0,1) \to \Gamma_{\delta} $, the corresponding unit normal
$\mathbf{n}_{\delta}$ and the set
$
\Gamma_{\eta, \delta} =
\left\{
\boldsymbol{\gamma}_{\delta}(u) + \eta v \mathbf{n}_{\delta}(\boldsymbol{\gamma}_{\delta}(u) ) \,:\,
v\in \left(
- f(u), 1 - f(u) \right),
\;\;
u\in (0,1)
\right\}$,
 where $f:(0,1) \to [0,1]$ is  a suitable 
smooth
 function that will be introduced below.
Figure \ref{fig:messy_lemma} provides a graphic explanation of the quantities introduced so far. 
 
 Exploiting the previous definitions, we define
$\psi_{\eta,\delta}: \mathbb{R}^2\to [0,1]$ such that
(i)
 $\psi_{\eta,\delta} (  
 \boldsymbol{\gamma}_{\delta}(u) + \eta v \mathbf{n}_{\delta}      )
 =
 \phi( v + f(u) )$
 for all   $u \in (0,1),
 v \in 
 \left(
- f(u), 1 - f(u) 
\right)
$,
 with $\phi(t) =  - 2 t^3 + 3 t^2$, 
 (ii)
  $ \psi_{\eta,\delta}|_{\Omega_{\rm ext} \setminus \Gamma_{\eta,\delta}} = 1$, and
(iii)
 $ \psi_{\eta,\delta}|_{\Omega_{\rm int} \setminus \Gamma_{\eta,\delta}} = 0$.
 It is straightforward to verify  that 
 $\psi_{\eta,\delta}$ is of class $C^1$, and that
 $$
 \nabla \psi_{\eta,\delta}(x)
=
\left\{
\begin{array}{ll}
\displaystyle{
\frac{1}{\eta} \left(
 \alpha_{\psi,\delta}(x)  \mathbf{n} + o(1)
\right) 
}
&
\displaystyle{
\forall \, x\in 
\Gamma_{\eta,\delta} 
 \setminus V_{
 \rm int, \delta},
}
\\[3mm]
\displaystyle{
\mathcal{O} \left(
\frac{ \alpha_{\psi,\delta}(x) }{\eta}
 \mathbf{n}_{\delta} + 
\frac{ \beta_{\psi,\delta}(x) }{\delta}
 \mathbf{t}_{\delta} 
\right) 
}
&
\displaystyle{
\forall \, x\in 
  V_{
 \rm int, \delta},
}
\\
\end{array}
\right.
 $$
where  $\alpha_{\psi,\delta}(x) \in (0,\max_{v\in \mathbb{R}} \phi'(v) )= (0,3/2)$ and $\beta_{\psi,\delta}(x)$ belongs to a bounded interval that is independent of  $\delta>0$.  In the previous expression, we used the fact that
 $\textbf{n}_\delta = \textbf{n}$ for 
 $x\in 
\Gamma_{\eta,\delta} 
 \setminus V_{
 \rm int, \delta}$, since ${\rm dist}_{\rm H}(\Gamma_{\delta} \setminus  V_{\rm int,\delta} ,\Gamma \setminus  V_{\rm int,\delta}) = 0$.
 
 We define 
 the positive vanishing sequences $\{   \eta_j \}_j, \{   \delta_j \}_j \subset \mathbb{R}_+$; for $j=1,2,\ldots$, we introduce
the function  $\psi_j =  \psi_{\eta_j ,\delta_j }$ 
 and 
  the field
 $ \Phi_j = \left( \Phi_{\rm e}^+ - \Phi_{\rm e}^- \right)\psi_j + \Phi_{\rm e}^-$, where  
 $ \Phi_{\rm e}^+, \Phi_{\rm e}^-$ are $C^1$ extensions
 of   $ \Phi^+, \Phi^-$ to $\mathbb{R}^2$
 (cf. \cite[section 2.5]{brudnyi2011methods}).
Clearly,  $\{ \Phi_j \}_j$ satisfies the conditions (i)
 and (ii);
furthermore,  $J( \Phi_j) \geq \epsilon$ in $\Omega_{\rm ext}\setminus \Gamma_{\eta_j,\delta_j} $;
   in the remainder, we prove that  
  $J( \Phi_j) \geq \epsilon/2$ in $\Gamma_{\eta_j,\delta_j} $.
 
 Let $x\in \Gamma_{\eta_j,\delta_j} \setminus V_{\rm int,\delta_j}$.
 Since $\Phi^+ = \Phi^-$ on $\Gamma$, we have 
$\left( \nabla (\Phi_{\rm e}^+ - \Phi_{\rm e}^-) \right) \mathbf{t} = 0$ for all 
$x\in \Gamma$. We define
$\mathbf{t}_{\Phi} = 
\frac{\nabla  \Phi_{\rm e}^{+}  \mathbf{t} }{\|  \nabla  \Phi_{\rm e}^{+} \mathbf{t}  \|_2}$ and 
$\mathbf{n}_{\Phi} = [(\mathbf{t}_{\Phi})_2, - (\mathbf{t}_{\Phi})_1]$. 
Given $x\in \Gamma_{\eta_j,\delta_j}$, $x=\boldsymbol{\gamma}(u) + \eta_j v \mathbf{n}$, we define
$\bar{x} = \boldsymbol{\gamma}(u) $ and
$\bar{y} = \Phi_{\rm e}^+( \bar{x}    ) = \Phi_{\rm e}^-(\bar{x} )$.
Then,
$$
\begin{array}{rl}
\Phi_{\rm e}^{\pm}(x) \approx &
\displaystyle{
\bar{y}  + \nabla \Phi_{\rm e}^{\pm}
( \bar{x})
 (x-\bar{x})
}
\\[3mm]
=
&
\displaystyle{
\bar{y}  + \nabla \Phi_{\rm e}^{\pm} ( \bar{x})
\mathbf{t} ( \bar{x}) \mathbf{t}^{\top}( \bar{x})
(x-\bar{x}) 
+ \nabla \Phi_{\rm e}^{\pm} ( \bar{x})
\mathbf{n}( \bar{x}) \mathbf{n}^{\top}( \bar{x})
(x-\bar{x})
}
\\[3mm]
=
& 
\displaystyle{
\bar{y}  + 
\left(
\alpha ( \bar{x})
\mathbf{t}_{\Phi}( \bar{x}) \mathbf{t}^{\top}( \bar{x})
+
\beta^{\pm}( \bar{x})
\mathbf{t}_{\Phi} ( \bar{x}) \mathbf{n}^{\top}( \bar{x})
+
\gamma^{\pm}
\mathbf{n}_{\Phi} ( \bar{x}) \mathbf{n}^{\top}( \bar{x})
\right)
(x-\bar{x})
},
\\
\end{array}
$$
which implies 
  $ \nabla \Phi_{\rm e}^{\pm} = 
  \alpha 
\mathbf{t}_{\Phi} \mathbf{t}^{T}
+
\beta^{\pm}
\mathbf{t}_{\Phi} \mathbf{n}^{T}
+
\gamma^{\pm}
\mathbf{n}_{\Phi} \mathbf{n}^{T}$, where
$\alpha,\beta^{\pm}, \gamma^{\pm}, \mathbf{t}_{\Phi}$
depend on $\bar{x}$.
Exploiting the latter, we find
$$
\begin{array}{l}
\displaystyle{
\nabla
\Phi_{j}(x)
=
\left( \nabla \Phi_{\rm e}^+(x) - \nabla  \Phi_{\rm e}^-(x) \right) \psi_{j}(x)
+
\left(   \Phi_{\rm e}^+(x)  -    \Phi_{\rm e}^-(x) \right) ( \nabla \psi_{j}(x) )^\top
+
\nabla  \Phi_{\rm e}^-(x)
}
\\[3mm]
\approx
\displaystyle{
\alpha  (\bar{x})
\mathbf{t}_{\Phi}  (\bar{x})   \mathbf{t}^{\top} (\bar{x})
+
\left(
\beta^- (\bar{x})  +
\llbracket   \beta  (\bar{x})  \rrbracket
\psi_{j}
\right)
\mathbf{t}_{\Phi} (\bar{x})
 \mathbf{n}^{\top} (\bar{x}) +
 }
\\[3mm]
\displaystyle{
\left(
\gamma^-(\bar{x})
+
\llbracket   \gamma  (\bar{x})  \rrbracket
\psi_{j}(\bar{x})
\right)
\mathbf{n}_{\Phi}(\bar{x}) \mathbf{n}^{\top}(\bar{x})
+
v \alpha_{\psi} (\bar{x})
\left(
\llbracket   \beta (\bar{x})  \rrbracket  \mathbf{t}_{\Phi} (\bar{x})
+
\llbracket   \gamma (\bar{x})  \rrbracket  \mathbf{n}_{\Phi} (\bar{x})
\right)
\mathbf{n}^\top (\bar{x})
},
\\
\end{array}
$$
for all $x\in \Gamma_{\eta}$. Since 
$\{ \mathbf{t}, \mathbf{n} \}$ and 
$\{ \mathbf{t}_{\Phi}, \mathbf{n}_{\Phi} \}$ are orthonormal bases of $\mathbb{R}^2$, we must have
$$
J(\Phi_{j})
=
\alpha
\left(
\gamma^-
+
\llbracket   \gamma  \rrbracket  
\left(
\psi_{j} + 
v \alpha_{\psi}
\right)
\right) + o(1).
$$
Let  $\alpha$ be positive;
the case $\alpha<0$ is analogous.
If $\llbracket   \gamma  \rrbracket>0$, we find
$$
\gamma^-
+
\llbracket   \gamma  \rrbracket  
\Big(
\underbrace{\psi_{\eta}}_{\geq 0}
 + 
 \underbrace{v}_{\geq -f(u)}
  \underbrace{\alpha_{\psi}}_{\leq 3/2}
\big)
\geq
\gamma - \frac{3}{2}  \llbracket   \gamma  \rrbracket   f(u),
$$
which exceeds $\gamma^-/2$ if $f(u) \leq \frac{\gamma^-}{3  \llbracket   \gamma  \rrbracket  }$.
If $\llbracket   \gamma  \rrbracket < 0$, we 
find that 
$J(\Phi_{\eta})\geq \alpha \frac{\gamma_+}{2} $ provided that 
$f(u) \geq 1 -  \frac{\gamma^+}{3 | \llbracket   \gamma  \rrbracket |  }$. Note that for $ \llbracket   \gamma  \rrbracket =0$ the bounds reduce to $-\infty \leq f(u) \leq \infty$: we can hence find a smooth function $f$ such that 
$J(\Phi_{j})
\geq 
 \frac{\alpha}{2} \min\{ \gamma^+, \gamma^-  \}
 \geq 
 \frac{\epsilon}{2}$.
 
 Let $x\in \Gamma_{\eta_j,\delta_j} \cap V_{\rm int,\delta_j}$;
 we assume  that $\eta_j =\mathcal{O}(\delta_j)$ for $j=1,2,\ldots$.
 Exploiting the same argument as before, we find
 $\nabla \Phi_{\rm e}^+=\nabla \Phi_{\rm e}^-$ for $\bar{x}\in  V_{\rm int}$,
 which implies that 
 $  \Phi_{\rm e}^+(x)=  \Phi_{\rm e}^-(x) + o(\delta)$ for any $x\in V_{\rm int}$. Therefore, by considering a Taylor expansion centered in 
 $x\in  V_{\rm int}$, we find
 $$
 \begin{array}{l}
  \nabla
\Phi_{j}(x)
=
\displaystyle{
\left( \nabla \Phi_{\rm e}^+(x) - \nabla  \Phi_{\rm e}^-(x) \right) \psi_{j}(x)
+
\left(   \Phi_{\rm e}^+(x)  -    \Phi_{\rm e}^-(x) \right) ( \nabla \psi_{j} (x)  )^\top
+
\nabla  \Phi_{\rm e}^- (x)
}
\\[3mm]
=
\displaystyle{
o(1)  \mathcal{O}(1) 
+
o(\delta) 
\mathcal{O} \left(
\frac{ \alpha_{\psi,\delta}(x) }{\eta}
 \mathbf{n}_{\delta}(x) + 
\frac{ \beta_{\psi,\delta}(x) }{\delta}
 \mathbf{t}_{\delta} (x)
\right) 
+
\nabla  \Phi_{\rm e}^- (\bar{x})
\overset{\rm (i)}{=}
\nabla  \Phi_{\rm e}^- (\bar{x})  + o(1)
}\\
 \end{array}
  $$
 which implies $J(  \Phi_{j})(x) \gtrsim \epsilon$.
 In (i), we exploited the hypothesis $\eta_j =\mathcal{O}(\delta_j)$ for $j=1,2,\ldots$.
\end{proof}

\begin{figure}[h!]
\centering
\subfloat[]{ 
\includegraphics[width=.47\textwidth]{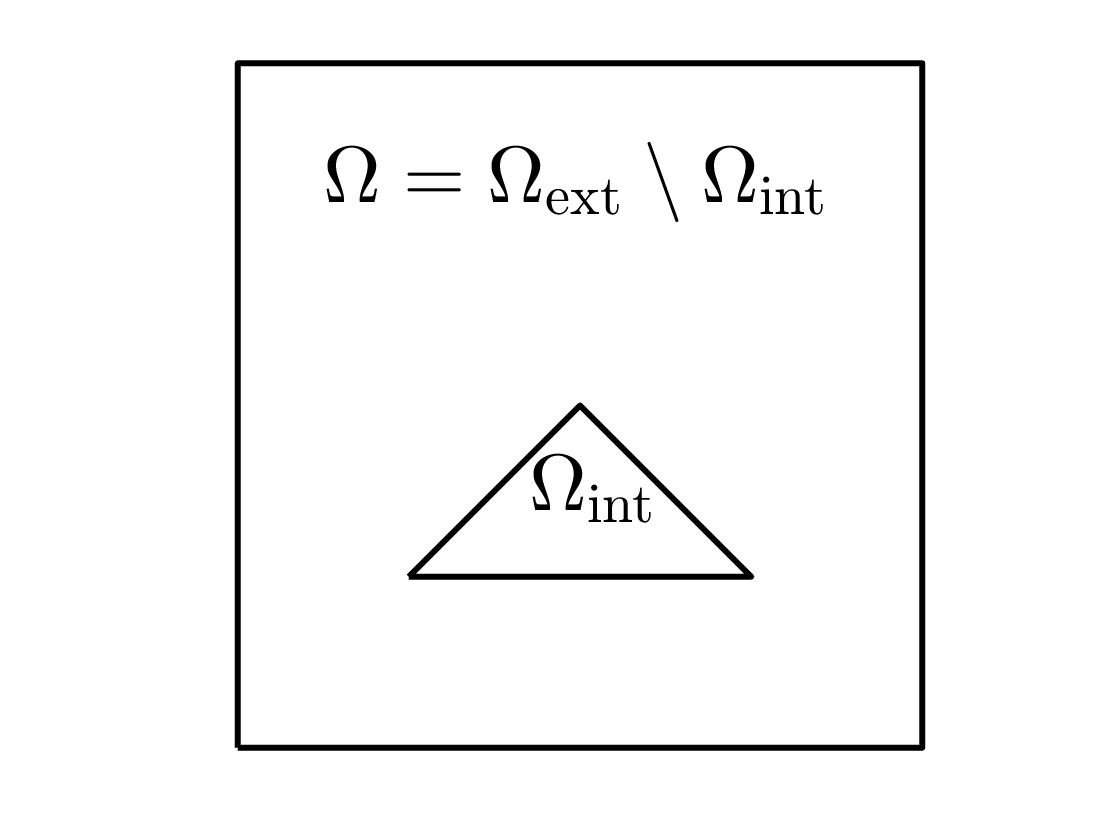}
}
~~
\subfloat[]{
\includegraphics[width=.47\textwidth]{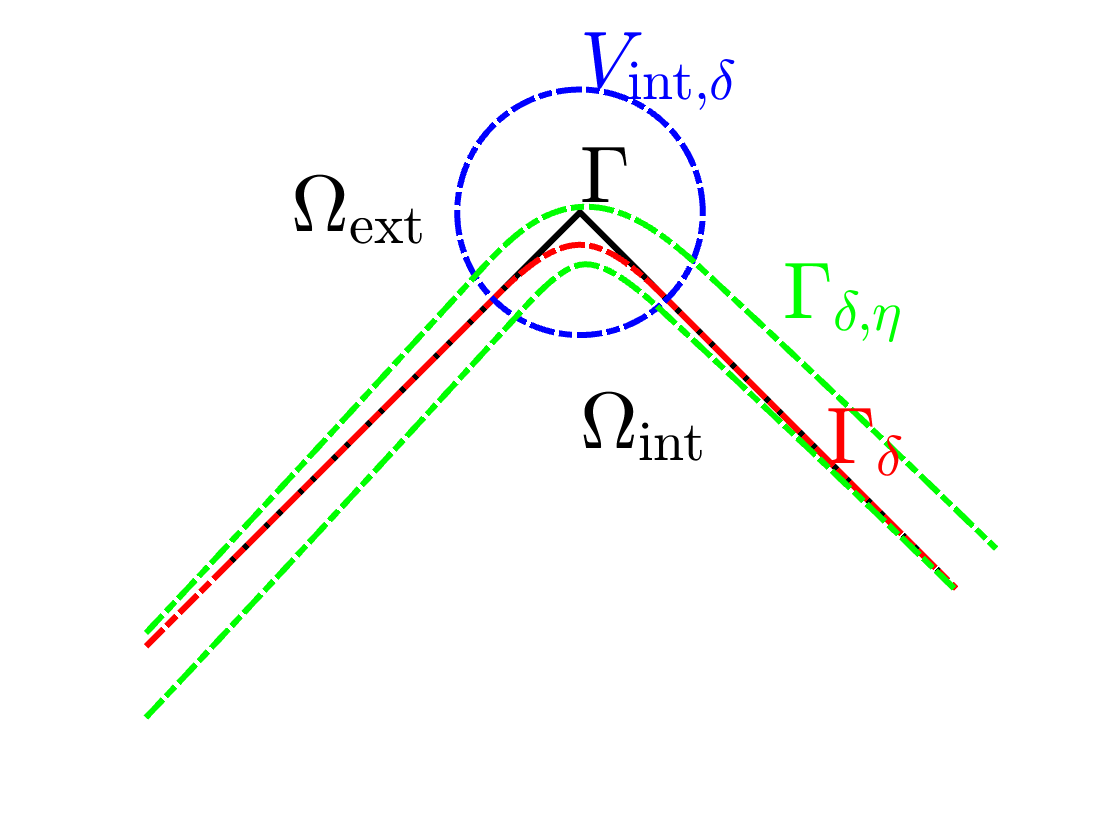}
}

\caption{proof of Lemma \ref{th:bijectivity_fricking_mess}. 
(a) example of regular polytope.
(b) visualization of $\Gamma$, 
$\Gamma_{\delta}$, 
$\Gamma_{\delta,\eta}$,
$V_{\rm int,\eta}$ in the proximity of one vertex.
}
\label{fig:messy_lemma}
\end{figure}

\begin{proof}
(\emph{Proposition \ref{th:bijectivity}}).
Exploiting Lemma \ref{th:lemma_easy}, we find that
$\Phi|_{\partial \Omega_{\rm int}}$ is a diffeomorphism in 
$\partial \Omega_{\rm int}$.
Therefore, exploiting Cerf's  theorem (see, e.g., \cite{laudenbach2023diffeomorphisms}),
 we find that, 
for $i=1,\ldots,N$,
 there exists a diffeomorphism
$\Psi_i:\Omega_{{\rm int},i} \to \Omega_{{\rm int},i}$ such that $\Psi_i=\Phi$ on $\partial \Omega_{{\rm int},i}$ and $| J(\Psi_i) |$ does not vanish in 
$\overline{\Omega}_{{\rm int},i}$.
If we define 
$\Omega_{\rm int}
= \bigcup_{i=1}^N
\Omega_{{\rm int},i}$ and $\Psi: \Omega_{\rm int} \to \Omega_{\rm int}$ such that
$\Psi|_{\partial \Omega_{{\rm int},i}} = \Psi_i$, we hence find that $\Psi$ is a diffeomorphism in $\Omega_{\rm int}$.
Exploiting the same argument as in Lemma \ref{th:bijectivity_fricking_mess}, we find that 
$\nabla \Psi = \nabla \Phi$ on the vertices of $\partial \Omega_{\rm int}$; therefore, 
$J(\Psi)$ is strictly positive in $\overline{\Omega}_{\rm int}$.
We set $\epsilon = \{ \inf_{x\in \Omega_{\rm p}} J(\Phi) ,
\inf_{x\in \Omega_{\rm int}} J(\Psi)   \}$.
We now define the extended map $\Phi_{\rm ext}: \Omega_{\rm ext}  \to \mathbb{R}^2$ that is equal to $\Phi$ in $\Omega_{\rm p}$ and is equal to $\Psi$ in $\Omega_{\rm int}$. Below, we prove that 
$\Phi_{\rm ext}$ is a bijection (injective and surjective) in $ \Omega_{\rm ext} $: since 
$\Psi$ is bijective in $\overline{\Omega}_{\rm int}$, this implies that 
$\Phi$ is bijective in  $\Omega_{\rm p}$.
Towards this end, we introduce a sequence  
$\{ \Phi_j   \}_j \subset C^1(\overline{\Omega}_{\rm ext}; \mathbb{R}^2)$ such that
 (i) $\Phi_j = \Phi$ on  $\partial \Omega_{\rm ext}$,
 (ii) $\lim_{j\to \infty} 
 \delta_j = 0$  with   
 $\delta_j =  \| \Phi_j - \Phi_{\rm ext}  \|_{L^{\infty}(\Omega_{\rm ext})}   =  0$,
 (iii) $\min_{x\in \overline{\Omega}_{\rm ext}} J(\Phi_j) \geq \frac{1}{2}\epsilon$ for $j=1,2,\ldots$
 (cf. Lemma \ref{th:bijectivity_fricking_mess}).
 For any $j>0$, $\Phi_j $ satisfies the hypotheses of Proposition \ref{th:mapping_general}: it is hence bijective in 
${\Omega}_{\rm ext}$.

\textbf{Injectivity of $\Phi_{\rm ext}$.}
By contradiction, assume that $\Phi_{\rm ext}$ is not injective, that is, there exist $x,y\in  {\Omega}_{\rm ext}$ such that
 $\Phi_{\rm ext}(x) = \Phi_{\rm ext}(y)$. 
 We distinguish between the case in which $x,y\in \partial \Omega_{\rm int}$ and 
the case in which $x \notin \partial \Omega_{\rm int}$ (the case $y \notin \partial \Omega_{\rm int}$ is analogous to the latter):
since $\Phi|_{\partial \Omega_{\rm int}}$ is a diffeomorphism in  
$\partial \Omega_{\rm int}$ (cf. Lemma \ref{th:lemma_easy}), the first scenario is impossible; we hence focus on the second case.

We denote  by $B_x$ the ball centered in $x$ of radius $\eta>0$.
Since $x \notin \partial \Omega_{\rm int}$,
there exists $\eta>0$ such that
$y \notin B_x$ and $\partial \Omega_{\rm int} \cap B_x = \emptyset$. 
Therefore, recalling notation introduced in Lemma \ref{th:bijectivity_fricking_mess},  there exists $j_0\in \mathbb{N}$ such that
$\Gamma_{\eta_j,\delta_j} \cap B_x = \emptyset$ for all $j>j_0$: this implies that, for any $j>j_0$, $\Phi_j(x')=\Phi(x')$ for any  $x' \in  B_x$;
furthermore, since $\Phi$ is a local diffeomorphism, there exist $c>0$ such that
$\| \Phi_j(x') - \Phi_j(x)  \|_2 = \| \Phi (x') - \Phi (x)  \|_2 \geq c \eta$ for 
any $x' \in \partial B_x$;
finally, since $\Phi_j$ is a global diffeomorphism, we also have 
$\| \Phi_j(x') - \Phi_j(x)  \|_2   \geq c \eta$
 for  any $x' \notin  B_x$.

Exploiting the previous identities and the reverse triangle inequality, we find
$$
\begin{array}{rl}
0 =  &
\| \Phi(y) - \Phi(x)  \|_2
=
 \| \Phi(y) - \Phi_j(x)  \|_2
=
 \| \Phi(y) - \Phi_j(y)  +\Phi_j(y) -  \Phi_j(x)  \|_2
\\[3mm]
\geq
&
 \| \Phi(y) - \Phi_j(y) \|_2 
-  
\| \Phi_j(y) -  \Phi_j(x)  \|_2
\\[3mm]
\Rightarrow &
\| \Phi_j(y) -  \Phi_j(x)  \|_2 \leq 
\| \Phi(y) - \Phi_j(y) \|_2. \\
\end{array}
$$
Since $\| \Phi_j(y) -  \Phi_j(x)  \|_2 \geq c \eta$ and
$\| \Phi(y) - \Phi_j(y) \|_2 \leq \delta_j$, we  find $c \eta \leq  \delta_j$ for any $j>j_0$,
 and thus $c \eta = 0$. Contradiction.

\textbf{Surjectivity of $\Phi_{\rm ext}$.}
In order to prove that 
$\Phi_{\rm ext}$ is surjective  in ${\Omega}_{\rm ext}$, 
that is, 
$\Phi_{\rm ext}({\Omega}_{\rm ext}) = {\Omega}_{\rm ext}$, 
 we first observe that 
$\Phi_{\rm ext}(\overline{\Omega}_{\rm ext})  \subset \overline{\Omega}_{\rm ext}$.
We have indeed that, for any $y\in  \overline{\Omega}_{\rm ext}$,
$$
{\rm dist} \left(
y,\Phi_{\rm ext}( {\Omega}_{\rm ext}   )
\right)
=
{\rm dist} \left(
\Phi_j(x_j),\Phi_{\rm ext}( {\Omega}_{\rm ext}   )
\right)
\leq
{\rm dist} \left(
\Phi_j(x_j),\Phi_{\rm ext} (x_j)
\right)
\leq\delta_j,
$$
where $x_j\in  \overline{\Omega}_{\rm ext}$ is the preimage of $y$ through $\Phi_j$; since  the latter holds for any $j$, we must have
${\rm dist} \left( y,\Phi_{\rm ext}( \overline{\Omega}_{\rm ext}   )
\right) =0$.
Second, we observe that 
$ \overline{\Omega}_{\rm ext} \subset 
\Phi_{\rm ext}(\overline{\Omega}_{\rm ext})$. We find indeed that
for any $x \in  \overline{\Omega}_{\rm ext}$
$$
{\rm dist} \left(
\Phi_{\rm ext}(x), \overline{\Omega}_{\rm ext}   
\right)
=
{\rm dist} 
\left(
\Phi_{\rm ext}(x), \Phi_j(\overline{\Omega}_{\rm ext}   )
\right)
\leq 
\delta_j,
$$
for $j=1,2,\ldots$, which implies
${\rm dist} \left(
\Phi_{\rm ext}(x), \overline{\Omega}_{\rm ext} \right) = 0$.
In conclusion, we find 
$\Phi_{\rm ext}(\overline{\Omega}_{\rm ext})  = \overline{\Omega}_{\rm ext}$: since 
$\Phi_{\rm ext}( \partial \Omega_{\rm ext})  = \partial \Omega_{\rm ext}$, we must have
$\Phi_{\rm ext}(   \Omega_{\rm ext})  =   \Omega_{\rm ext}$.
\end{proof}

\section{Notation}
\label{sec:notation}

To ease the presentation, we distinguish between basic definitions, 
definitions associated with the parametric problem of interest,
definitions associated with the optimization statement, and definitions associated with the finite element approximation of the compositional map. Below, $\omega$ denotes a generic domain in $\mathbb{R}^2$,
which is either $\Omega$ or $\Omega_{\rm p}$ in the main body of the manuscript.
To shorten notation, the explicit dependence on the domain $\omega$  is omitted if not strictly necessary.

\noindent \textbf{Basic definitions.}
\begin{itemize}
\item
$\Omega\subset \mathbb{R}^2$ Lipschitz domain.
\item
 $\texttt{id}: \mathbb{R}^2 \to \mathbb{R}^2$ identity map.
 \item
 $\mathfrak{B}(\omega)$  set of Lipschitz bijections from the domain $\omega$ in itself.
  \item
 $\mathfrak{D}(\omega)$  set of diffeomorphisms from the domain $\omega$ in itself.
 \item
 $J(\Phi) = {\rm det} (\nabla \Phi)$ Jacobian determinant of the vector-valued field $\Phi: \omega \to \mathbb{R}^2$;
  $H(\Phi)$ Hessian of $\Phi$.
 \item
 $\mathbf{n}_{\omega}: \partial \omega \to \mathbb{S}^1=\{x\in \mathbb{R}^2: \|x\|_2 = 1\}$ outward normal to the domain $\Omega$.
 \item
${\mathfrak{U}}_0(\omega) = \{ \varphi \in C^1(\overline{\omega}; \mathbb{R}^2) \,: \, \varphi \cdot \mathbf{n} |_{\partial \omega} = 0 \}$.
\item
$\Omega_{\rm p}\subset \mathbb{R}^2$ polytope isomorphic to the domain $\Omega$.
\item
$V = \{ x_i^{\rm v} \}_{i=1}^{N_{\rm v}}$
minimal set of  vertices of 
 $\Omega_{\rm p}$.
\item
$\Psi: \Omega_{\rm p} \to \Omega$  bijection from the polytope $\Omega_{\rm p}$ to $\Omega$.
\item
 $\{F_j^{\rm p} \}_{j=1}^{N_{\rm f,p}}$   facets of $\Omega_{\rm p}$; 
$\{F_j = \Psi(F_j^{\rm p}) \}_{j=1}^{N_{\rm f,p}}$ mapped facets.
\item
$V_{\rm ang}(\omega)$  angular points of $\partial \omega$ (discontinuities of the normal $\mathbf{n}_{\omega}$).
\item
$\overline{\omega}$ closure of $\omega$.
\item
${\rm dist}_{\rm H}(\omega,\omega')$ Hausdorff distance  between $\omega$ and $\omega'$.
\item
${\rm col}(\mathbf{W})$  subspace of $\mathbb{R}^M$ spanned by the columns of the matrix $\mathbf{W}\in \mathbb{R}^{M\times m}$.
\end{itemize}

\noindent \textbf{Parametric problem.}
\begin{itemize}
\item
$\mu$ vector of parameters in the 
parameter domain $\mathcal{P} \subset \mathbb{R}^P$.
\item
$u:\Omega \times \mathcal{P} \to \mathbb{R}^{D_{\rm u}}$ parametric field of interest, $u_{\mu}(\cdot) := u(\cdot,\mu)$.
\item
$\mathcal{M}=\{ u_{\mu} : \mu\in \mathcal{P}\}$ solution manifold.
\item
$\widetilde{\mathcal{M}}=\{ 
{u}_{\mu}  \circ \Phi_{\mu}^{-1}: \mu\in \mathcal{P}\}$ mapped solution manifold.
\item
$\mathcal{T}_{\rm pb}$ high-fidelity mesh  with elements
$\{ \texttt{D}_k^{\rm pb}  \}_{k=1}^{N_{\rm e}^{\rm pb}}$
and facets
$\{ \Psi_k^{\rm pb}  \}_{k=1}^{N_{\rm e}^{\rm pb}}$ employed to approximate the elements of 
$\widetilde{\mathcal{M}}$.
\end{itemize}

\noindent 
\textbf{Optimization statement  (I): general  definitions and analysis.}
\begin{itemize}
\item
Minimization statement: \eqref{eq:tractable_optimization_based_registration}.
\item
$\mathfrak{f}^{\rm tg}:
\mathfrak{B}(\Omega) \times \mathcal{P} \to \mathbb{R}$
target function.
\item
$\mathfrak{f}_{\rm pen}:
\mathbb{R}^M \to \mathbb{R}$
penalty function.  
\item
$\texttt{N}: \mathbb{R}^M \to {\rm Lip}(\Omega; \mathbb{R}^2)$
mapping operator.
\item
$A$ (cf.
 \eqref{eq:admissible_maps_jacobian})
admissible set for the operator $\texttt{N}$.
\item
$A_{\rm jac}$
 (cf.
 \eqref{eq:admissible_maps_jacobian})
maps with strictly positive Jacobian determinant. 
\item
\eqref{eq:nonlinear_ansatz} 
compositional maps.
The pair $(\Omega_{\rm p},\Psi)$ satisfies Hypothesis
\ref{hyp:regular_polytopes}.
 \item
 $\mathfrak{f}_{\rm pen}^{\rm th}$  
(cf. \eqref{eq:f_pen_theory})
penalty function used for the analysis of section \ref{sec:affine_maps_polytopes}
 \item
\eqref{eq:nonlinear_ansatz_generalized}
multi-layer compositional maps.
The pairs $(\Omega_{{\rm p},1},\Psi_1),\ldots,(\Omega_{{\rm p},\ell},\Psi_{\ell})$ satisfy Hypothesis
\ref{hyp:regular_polytopes}.
\end{itemize}

\noindent 
\textbf{Finite-element discretization of the mapping operator} (cf. section \ref{sec:methods}).
 
\begin{itemize}
\item
$\mathbb{P}_{\kappa}(\mathbb{R}^2)$ space of two-variable polynomials of total degree less or equal to $\kappa$.
\item
$\widehat{\texttt{D}} = \{ x\in (0,1)^2: \sum_{i=1}^2 (x)_i < 1 \}$
reference (or master) element.
$\{ \ell_{i}^{\rm fe} \}_{i=1}^{n_{\rm lp}}$ 
Lagrangian basis
 of 
$\mathbb{P}_{\kappa}(\mathbb{R}^2)$ associated with the nodes
$\{ \tilde{x}_{i} \}_{i=1}^{n_{\rm lp}} \subset \overline{\widehat{\texttt{D}}}$;
the nodes include the vertices $\{ \tilde{x}_{i}^{\rm p} \}_{i=1}^{3}$ of the triangle  $\widehat{\texttt{D}}$.
\item
 $\{ \ell_{i}^{\rm fe,p} \}_{i=1}^{3}$ Lagrangian basis of 
 $\mathbb{P}_{1}$ associated with the 
  vertices $\{ \tilde{x}_{i}^{\rm p} \}_{i=1}^{3}$ of   $\widehat{\texttt{D}}$.
 \item
 $\mathcal{T}$ curved mesh employed to define the polytope $\Omega_{\rm p}$ (cf. \eqref{eq:geometric_mapping_polytope_coarsegrained}) and the mapping $\Psi$
(cf. \eqref{eq:geometric_mapping_Psi});
 $ \{  \texttt{D}_k \}_{k=1}^{N_{\rm e}}$   elements of the mesh, 
 $ \{  \texttt{F}_j \}_{j=1}^{N_{\rm f}}$   facets of the mesh, 
 $\{ x_{i,k}^{\rm hf} \}_{i,k}$ nodes of the mesh ($x_{i,k}^{\rm hf}$ is 
 the $i$-th node of the $k$-th element of  the mesh).
 \item
 $\Psi_k $ FE mapping from $\widehat{\texttt{D}}$ to the  $k$-th element of the mesh, $k=1, \ldots, N_{\rm e}$ 
 (cf. \eqref{eq:geometric_mapping_elemental_mapping}).
 \item
 $\Psi_{k,\rm p}$ FE mapping from $\widehat{\texttt{D}}$ to the  $k$-th linearized element of the mesh, $k=1, \ldots, N_{\rm e}$ (cf. \eqref{eq:geometric_mapping_elemental_mapping}).
 \item
 $\mathcal{T}_{\rm p}$ linear mesh with elements 
  $ \{  \texttt{D}_{k,\rm p} \}_{k=1}^{N_{\rm e}}$ and facets
    $ \{  \texttt{F}_{j,\rm p} \}_{k=1}^{N_{\rm f}}$. 
\item
$\mathcal{E}_{\rm p}^{\rm int} = \bigcup_{j\in \texttt{I}_{\rm int} } \texttt{F}_{j,\rm p}$ internal facets of 
$\mathcal{T}_{\rm p}$.
\item
$\mathbf{n}^+: \mathcal{E}_{\rm p}^{\rm int}  \to \mathbb{S}^1$ positive normal.
\item
$\mathcal{U}$ 
\eqref{eq:mapping_space}
FE displacement  space 
equipped with the inner product
\eqref{eq:mapping_inner_product}.  
\end{itemize} 
 
\noindent 
\textbf{Optimization statement  (II): practical definitions.}

\begin{itemize}
\item
$\mathfrak{f}_{\rm pen}$ penalty function
\eqref{eq:penalty_function_a}: 
$\mathfrak{P}$ \eqref{eq:penalty_term}
smoothing term;
$\mathfrak{f}_{\rm jac}$ 
\eqref{eq:f_jac_tmp}
bijectivity-enforcing penalty term;
$\mathfrak{f}_{\rm msh}$ 
\eqref{eq:f_msh}
discrete bijectivity-enforcing penalty term;
\item
\eqref{eq:optimization_georeg}
optimization problem for the construction of 
$\Psi$:
$\widetilde{\mathcal{W}}$ 
\eqref{eq:displacement_geometric_registration}
search space;
 $\mathfrak{P}_{\rm brkn}$ smoothing term \eqref{eq:penalty_term_broken}.
\item
 Target function based on 
 point-set sensor
 \eqref{eq:target_pointset};
  target function  based on distributed sensor \eqref{eq:distributed_target}, $\mathcal{S}_n\subset L^2(\Omega_{\rm p})$ template space for the distributed sensor
$s:\mathcal{P} \to L^2(\Omega_{\rm p})$.
\end{itemize}

\bibliographystyle{abbrv}	
\bibliography{all_refs}

\end{document}